\tikzset{negated/.style={
    decoration={markings,
      mark= at position 0.5 with {
        \node[transform shape] (tempnode) {$\times$};
      }
    },
    postaction={decorate}
  }
}
\newtheorem{theorem}{Theorem}
\newtheorem{corollary}[theorem]{Corollary}
\newtheorem{lemma}[theorem]{Lemma}
\newtheorem{setup}[theorem]{Set up}
\newtheorem{remark}[theorem]{Remark}
\newtheorem{example}[theorem]{Example}
\newtheorem{definition}{Definition}[subsection]
\newcommand{\Irr}{\textnormal{Irr}}
\newcommand{\cd}{\textnormal{cd}}
\newcommand{\nl}{\textnormal{nl}}
\newcommand{\lin}{\textnormal{lin}}
\newcommand{\gal}{\textnormal{Gal}}
\newcommand{\Mod}[1]{\ (\mathrm{mod}\ #1)}
\title[Faithful quasi-permutation representations]{On faithful quasi-permutation representations of\\ VZ-groups and Camina $p$-groups}
\author{Sunil Kumar Prajapati$^*$}
\address{Indian Institute of Technology, Bhubaneswar, Arugul Campus, Jatni, Khurda-752050, India.}
\email{skprajapati@iitbbs.ac.in}
\author{Ayush Udeep}
\address{Indian Institute of Technology, Bhubaneswar, Arugul Campus, Jatni, Khurda-752050, India.}
\email{udeepayush@gmail.com}
\thanks{$^{\textbf{*}}$ Corresponding author.
}
\subjclass[2010]{primary 20D15; secondary 20C15, 20B05}
\keywords{Camina groups, $p$-groups, Quasi-permutation representations}
\begin{document}
\maketitle


%
%
%
%
%
	
\begin{abstract}
For a finite group $G$, we denote by $\mu(G)$ and $c(G),$ the minimal degree of faithful permutation representation of $G$ and the minimal degree of faithful representation of $G$ by quasi-permutation matrices over the complex field $\mathbb{C}$. In this paper, we examine $c(G)$ and $\mu(G)$ for $VZ$-groups and Camina $p$-groups. 
\end{abstract}
	
	\section{Introduction}
	Throughout this paper, $G$ is a finite group and $\Irr(G)$ is the set of irreducible complex characters of $G$. By a classical Cayley's theorem, $G$ can be embedded into the Symmetric group $S_{|G|}$. It is interesting to search for smallest $n$ such that $G$ is contained in $S_n$. The minimal faithful permutation degree $\mu(G)$ of $G$ is the least positive integer $n$ such that $G$ embeds inside $S_{n}$. As we know, every permutation representation can be expressed as
$\mathcal{P} =\{H_1,\ldots, H_n\}$ and $\deg(\mathcal{P}) =\sum_{i=1}^n |G / H_i |$, where $H_i$'s are subgroups of $G$. So we may restate the minimal faithful degree of a group $G$ as
\[ \mu(G)= \min \left\{ \sum\limits_{i=1}^{n}|G/H_{i}|~|~ H_{i}\leq G \text{ for } i=1,2,\ldots,n \text{ and } \bigcap_{\substack{i=1}}^{n}\bigcap_{\substack{x\in G}}{H_{i}}^{x}=1  \right\} \] (see \cite{HB}).
In a parallel direction to the definition of permutation group, in 1963, Wong \cite{W}  defined a quasi-permutation group in the following way: If $G$ is a finite linear group of degree $n$ (i.e. a finite group of automorphisms of an $n$-dimensional complex vector space) such that the trace of every element of $G$ is a non-negative integer, then $G$ is called a quasi-permutation group. 
	This terminology allows us to have another degree $c(G)$, denoting the minimal degree of a faithful representation of $G$ by complex quasi-permutation matrices. Since every permutation matrix is a quasi-permutation matrix, it is easy to see that $c(G)\leq \mu(G)$.  Another interesting quantity is $q(G)$, which denotes the  minimal degree of a faithful representation
of G by quasi-permutation matrices over the rational field $\mathbb{Q}$. In past, several researchers have carried out the study of $c(G), q(G)$ and $\mu(G)$. 
The interested reader may refer to 
\cite{AB, BD, BG, HB1997, HB, BGHS, GA, MG, W, DW}. It is easy to see that, for a finite group $G$, we have the following relation between $\mu(G), ~ c(G)$ and $q(G):$  
$$c(G)\leq q(G) \leq \mu(G).$$

 In this article, we study $c(G)$ and $\mu(G)$ for two types of finite non-abelian groups, type 1: group $G$ for which the set 
 $\cd(G)=\{\chi(1)~|~ \chi \in \Irr(G)\}= \{1, |G/Z(G)|^{1/2}\}$ and type 2: Camina $p$-groups. Groups of type 1 are also known as VZ-group and their nilpotency class is $2$. A group is called a VZ-group if all its nonlinear irreducible complex characters vanish off the center. VZ-group was studied in \cite{FAM, L1, L2009}. A $p$-group $G$ is called a Camina $p$-group if the conjugacy class of $g$ is $gG{}'$ for all $g\in G\setminus G{}'$.
Study of Camina groups was influenced by investigating common generalization of Frobenius groups
and extra-special groups, and it has been studied by several researchers (\cite{DS, IL, lewis}).

In \cite{BG}, authors gave an algorithm for the computation of $c(G)$. They proved that if $X$ is a subset of $\Irr(G)$ such that $\cap_{\chi \in X} \ker (\chi)= 1$, and for any proper subset $Y$ of $X,~ \cap_{\chi \in Y} \ker (\chi) \neq 1,$ then $c(G) \leq \xi(1) + m(\xi),$ where $ \xi = \sum_{\chi \in X} \left[ \sum_{\sigma \in \Gamma(\chi)} \chi^{\sigma}  \right], $
 where $\Gamma(\chi)$ is the Galois group of $\mathbb{Q}(\chi)$ over $\mathbb{Q}$ and $m(\xi)$ is the absolute value of the least value of $\sum_{\chi \in X}\sum_{\sigma \in \Gamma(\chi) } \chi^{\sigma}$ over $G$ (see \cite[Lemma 2.2]{BG}). 
We call a set $X_G\subset \Irr(G)$, a minimal faithful quasi-permutation representation of $G$ if \begin{equation}\label{eq:X_G} 
		\bigcap_{\chi \in X_G} \ker (\chi) = 1,
		\end{equation}
		such that equation \eqref{eq:X_G} does not hold for any proper subset of $X_G$ and $c(G) = \xi(1) + m(\xi)$. 

	In \cite{BG}, authors proved that for a finite $p$-group $G$ with $p$ odd prime, $q(G) = \mu(G)$.
Moreover, for odd prime $p$, $c(G) = q(G) = \mu(G)$. In \cite[Theorem 4.12]{HB}, author proved that if $G$ is a finite $p$-group of class $2$ and $Z(G)$ is cyclic, then 
$c(G)= |G/Z(G)|^{1/2}|Z(G)|$. Note that a finite $p$-group of class $2$ with cyclic center may or may not be a VZ $p$-group. Suppose $\nl(G)$ and $\lin(G)$ denote the set of all non-linear irreducible complex characters of $G$ and the set of all linear characters of $G$, respectively. In this article, to prove our results, we use the following set up.

\begin{setup}\label{setup:thm1} \textnormal{ \noindent Suppose $d(G)$ denotes the minimal number of generators of $G$. Let $G$ be a VZ $p$-group, $d(Z(G)) = r$ and $d(G') = k.$ Here $G' \leq Z(G)$ and  $G{}'$ is elementary abelian $p$-group (by Lemma \ref{lem:VZgroup}).
Then we can choose $a_{1}, a_{2}, \ldots, a_{r}\in G$ such that
\begin{equation} \label{eq:theorem1}
Z(G) = \langle a_{1},a_{2},\ldots, a_{r} \rangle=H_{1}\times \cdots \times H_{r} \cong C_{p^{l_{1}}} \times C_{p^{l_{2}}} \times \cdots \times C_{p^{l_{r}}}, ~ G' = \langle a_{1}^{p^{l_{1}-1}}, \ldots, a_{k}^{p^{l_{k}-1}} \rangle,
\end{equation}
where $H_{i}=\langle a_{i} \rangle$ 
and 
\begin{equation} \label{eq:theorem20}
\frac{Z(G)}{G'} = \langle a_{1}G',\ldots,  a_{k}G', a_{k+1}G',\ldots,a_{r}G' \rangle \cong C_{p^{l_{1}-1}} \times \cdots \times C_{p^{l_{k}-1}} \times C_{p^{l_{k+1}}} \cdots \times C_{p^{l_{r}}}.
\end{equation}
Set, for each $i$  ($1\leq i \leq r$) $\widehat{H_{i}}=H_{1}\times \cdots \times H_{i-1}\times H_{i+1}\times \cdots \times H_{r}$.
}
\end{setup}
\noindent Under the above set up, we have the following theorem. 
	\begin{theorem} \label{T1}
		Let $G$ be a VZ $p$-group, for any prime $p$. Suppose $d(Z(G)) = r$ and  $d(G')= k$. For each $i$ ($1\leq i\leq k$), 
let $\chi_{i} \in \nl(G)$ be such that $\ker (\chi_{i}) = \widehat{H_{i}}$ and for $k+1\leq  i\leq r,$ let $\psi_{i} \in \lin(G)$  be such that $\ker (\psi_{i}) \cap Z(G) = \widehat{H_{i}}$.  Then $X_G = \{ \chi_{1},\ldots,\chi_{k},\psi_{k+1},\ldots,\psi_{r} \}$ is a minimal faithful quasi-permutation representation of $G$. Further, suppose $|G/\ker(\psi_{i})|=p^{m_{i}}$ for $k+1\leq  i\leq r$.  Then
		\begin{equation} \label{eq:25}
		c(G) = |G/Z(G)|^{1/2} \sum\limits_{i=1}^{k} |H_{i}| + \sum\limits_{i=k+1}^{r} p^{m_{i}}.
		\end{equation}
If $p$ is an odd prime, then $c(G)=\mu(G)=q(G)=|G/Z(G)|^{1/2} \sum\limits_{i=1}^{k} |H_{i}| + \sum\limits_{i=k+1}^{r} p^{m_{i}}$. 
	\end{theorem}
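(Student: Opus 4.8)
The plan is to prove the degree formula by matching an upper and a lower bound, working through the standard reduction of $c(G)$ to a character-theoretic minimization (as developed in the cited references). A faithful quasi-permutation character of $G$ may be taken in the form $\Theta_{S} = \sum_{\chi\in S}\Gamma(\chi)$, where $S$ is a set of representatives of the $\gal$-orbits of nontrivial irreducible characters with $\bigcap_{\chi\in S}\ker\chi = \{e\}$, where $\Gamma(\chi)=\sum_{\sigma\in\gal(\mathbb{Q}(\chi)/\mathbb{Q})}\chi^{\sigma}$ is the orbit sum (forced because an integer-valued combination must give equal multiplicity to $\gal$-conjugates), and where the degree of the associated quasi-permutation character is $\Theta_{S}(1)-\min_{g\in G}\Theta_{S}(g)$, the subtracted term counting the copies of the trivial character needed to make all traces non-negative integers. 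Thus $c(G)=\min_{S}\bigl(\Theta_{S}(1)-\min_{g}\Theta_{S}(g)\bigr)$, and the task is to evaluate this minimum.

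For the upper bound I would take $S=X$. Each $\chi_{i}$ is nonlinear, so by the $VZ$ property it vanishes off $Z(G)$ and $\chi_{i}|_{Z(G)}=\chi_{i}(1)\lambda_{i}$ with $\lambda_{i}$ a linear character of $Z(G)$ of kernel $\widehat{H_{i}}$; hence $\lambda_{i}$ is faithful on $Z(G)/\widehat{H_{i}}\cong H_{i}\cong C_{p^{l_{i}}}$. Computing $\Gamma(\chi_{i})$ on $Z(G)$ gives $\chi_{i}(1)$ times a Ramanujan sum for $C_{p^{l_{i}}}$, whose value at $1$ is $\chi_{i}(1)\phi(p^{l_{i}})$ and whose minimum is $-\chi_{i}(1)p^{l_{i}-1}$; likewise $\Gamma(\psi_{i})$ is a Ramanujan sum for $C_{p^{m_{i}}}$ with degree $\phi(p^{m_{i}})$ and minimum $-p^{m_{i}-1}$. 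The key observation is that all of these minima are attained simultaneously at the single element $w=\prod_{i=1}^{r}a_{i}^{p^{l_{i}-1}}$, so that $\min_{g}\Theta_{X}(g)=-\sum_{i\le k}\chi_{i}(1)p^{l_{i}-1}-\sum_{i>k}p^{m_{i}-1}$; adding $\Theta_{X}(1)$ and using $\phi(p^{l})+p^{l-1}=p^{l}$ collapses the total to $|G/Z(G)|^{1/2}\sum_{i\le k}|H_{i}|+\sum_{i>k}p^{m_{i}}$. Faithfulness of $X$ follows since $\ker\chi_{1}=\widehat{H_{1}}\subseteq Z(G)$ forces the common kernel into $Z(G)$, where $\bigcap_{i=1}^{r}\widehat{H_{i}}=\{e\}$.

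For the lower bound I would show every faithful $S$ costs at least this much. Faithfulness is equivalent to the constituents separating the socle $\mathrm{Soc}(Z(G))\cong\mathbb{F}_{p}^{\,r}$; moreover every linear constituent is trivial on $G'$, and $G'$ contains precisely the first $k$ socle coordinates $a_{i}^{p^{l_{i}-1}}$ $(1\le i\le k)$, so those $k$ coordinates can be separated only by nonlinear constituents. A nonlinear $\chi$ nontrivial on $a_{i}^{p^{l_{i}-1}}$ must have $\lambda_{\chi}(a_{i})$ of order exactly $p^{l_{i}}$, hence $\lambda_{\chi}$ of order at least $p^{l_{i}}$, so $\Gamma(\chi)$ contributes degree at least $\chi(1)\phi(p^{l_{i}})$ and depth at least $\chi(1)p^{l_{i}-1}$; a linear constituent separating coordinate $i>k$ has order $p^{m_{i}}\ge p^{l_{i}}$ and contributes $p^{m_{i}}$. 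Since separating the $r$-dimensional socle requires constituents whose socle-normals span $\mathbb{F}_{p}^{\,r}$, I would argue that, under these order constraints, the cheapest admissible configuration dedicates one constituent to each coordinate, yielding exactly the asserted sum.

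The genuinely delicate step is this last additivity claim, because the non-negativization term satisfies only $-\min_{g}\Theta_{S}(g)\le\sum_{\chi\in S}\bigl(-\min_{g}\Gamma(\chi)(g)\bigr)$, so overlapping constituents might a priori undercut the sum of the individual costs. I expect this to be the main obstacle, and I would control it either by induction on $r$, peeling off a cyclic factor $H_{r}$ of $Z(G)$ together with one constituent and reducing to a $VZ$ group (or central quotient) of smaller center-rank, or by showing directly that separating $\mathbb{F}_{p}^{\,r}$ forces $\min_{g}\Theta_{S}$ to occur at a socle element at which the coordinate contributions cannot cancel, so that the per-coordinate bounds add. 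Finally, the equalities $c(G)=q(G)=\mu(G)$ for odd $p$ are immediate from the cited result of \cite{BG} that $q(G)=\mu(G)$ for $p$-groups with $p$ odd, combined with $c(G)\le q(G)$ and the value of $c(G)$ just established.
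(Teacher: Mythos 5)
Your upper bound is essentially the paper's own argument: the same set $X$, the same Ramanujan-sum evaluation giving degree $\phi(p^{l_i})\chi_i(1)$ (resp.\ $\phi(p^{m_i})$) and depth $\chi_i(1)p^{l_i-1}$ (resp.\ $p^{m_i-1}$) per constituent, collapsing to $|G/Z(G)|^{1/2}\sum_{i\le k}|H_i|+\sum_{i>k}p^{m_i}$; and your observation that linear constituents, being trivial on $G'$, cannot separate the $k$ socle coordinates inside $G'$ is exactly the paper's Lemma \ref{L6}. The problem is the lower bound, where you explicitly leave the decisive step open: you correctly note that the inequality $-\min_g\Theta_S(g)\le\sum_{\chi\in S}\bigl(-\min_g\Gamma(\chi)(g)\bigr)$ points the wrong way, and you only sketch two candidate strategies (induction on $r$, or locating the minimum at a socle element where contributions cannot cancel) without carrying either out. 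As written, nothing in your argument excludes a faithful system whose negativization term is strictly smaller than the sum of the individual depths, which would undercut the claimed value of $c(G)$; this is a genuine gap, not a routine verification.

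The paper closes precisely this gap by invoking Lemma \ref{L1} (that is, Corollary 2.6 of \cite{BD}): for a $p$-group, any $\xi=\sum_{i\in I}\Theta_i$ realizing $c(G)$ satisfies $|I|=d(Z(G))=r$ and $m(\xi)=\frac{1}{p-1}\xi(1)$. This removes the cancellation issue at a stroke, since then $c(G)=\frac{p}{p-1}\xi(1)$, and the degree $\xi(1)=\sum_{i\in I}\Theta_i(1)$ \emph{is} additive over constituents; your per-coordinate constraints (at least $k$ nonlinear constituents, each of degree $|G/Z(G)|^{1/2}$ and central order at least $p^{l_i}$, plus $r-k$ linear ones) then sum directly to the stated formula, which is in substance what the paper's Step III does by comparing the restriction of $X$ to $Z(G)$ with a minimal system computing $c(Z(G))$. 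So your outline is salvageable, but only after importing (or reproving) this structural result for $p$-groups; without it, the additivity you yourself flag as ``the main obstacle'' remains unresolved and the proof is incomplete. Your final assertion for odd $p$, via $c(G)\le q(G)$ and $q(G)=\mu(G)$ from Theorem \ref{thm:pgroup}, is correct and matches the paper.
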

	
\noindent For VZ-groups, in \cite{AB} authors proved the following result. 
\begin{lemma}\cite[Lemma 2.2]{AB}\label{lem:finteVZgroup}
If $G$ is a finite group with $G{}'=Z(G)$, and $\cd(G)=\{1, |G/Z(G)|^{1/2}\}$, then $c(G)= |G/Z(G)|^{1/2}c(Z(G))$.
\end{lemma}
\noindent Note that a finite VZ-group is a nilpotent group of class $2$ and thus can be written as a direct product of its Sylow subgroups. Since $\cd(G)=\{1, |G/Z(G)|^{1/2}\}$, all the Sylow subgroups of $G$, except one, are abelian. 
Now by this observation, group mentioned in Lemma \ref{lem:finteVZgroup} must be a finite $p$-group. 
From Theorem \ref{T1}, we deduce the following result, which is a generalization of Lemma \ref{lem:finteVZgroup}.
\begin{corollary} \label{C2}
	Let $G$ be a VZ $p$-group (prime $p$) such that $d(G') = d(Z(G))$. Then
	\[ c(G) = |G/Z(G)|^{1/2} c(Z(G)). \]
\end{corollary}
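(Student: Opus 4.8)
The plan is to derive the corollary directly from Theorem~\ref{T1} by specializing to the case $d(G') = d(Z(G))$, i.e. $k = r$. The key observation is that when $k = r$, the second sum $\sum_{i=k+1}^{r} p^{m_i}$ in equation~\eqref{eq:25} is empty and therefore vanishes, so the formula collapses to $c(G) = |G/Z(G)|^{1/2} \sum_{i=1}^{r} |H_i|$. What remains is purely a statement about the abelian group $Z(G)$: I must show that $\sum_{i=1}^{r} |H_i| = c(Z(G))$.

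First I would invoke the structure from Set~up~\ref{setup:thm1}: since $k = r$, we have $Z(G) = H_1 \times \cdots \times H_r \cong C_{p^{l_1}} \times \cdots \times C_{p^{l_r}}$, a direct product of $r$ cyclic $p$-groups where $r = d(Z(G))$ is the minimal number of generators. The quantity $\sum_{i=1}^{r} |H_i| = \sum_{i=1}^{r} p^{l_i}$ is then exactly the sum of the orders of the cyclic factors in the invariant-factor (or primary) decomposition of the abelian group $Z(G)$.

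The main step is therefore to identify $c(Z(G))$ for an abelian $p$-group. The plan is to appeal to the known computation of $c$ for finite abelian groups: for a direct product of cyclic $p$-groups, a faithful quasi-permutation representation of minimal degree is built by taking, for each cyclic direct factor, a faithful representation of that factor of the smallest possible quasi-permutation degree, and the degrees add across the factors. For a cyclic group $C_{p^l}$ the minimal faithful quasi-permutation degree is $p^l$ (realized, for instance, by the regular representation or by the augmentation-type construction used in Wong's work, giving $c(C_{p^l}) = p^l = |H_i|$). Summing over the $r$ factors yields $c(Z(G)) = \sum_{i=1}^{r} p^{l_i} = \sum_{i=1}^{r} |H_i|$.

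I expect the only genuine obstacle to be confirming that $c$ is additive over the direct-product decomposition of the abelian group $Z(G)$ and that each cyclic factor $C_{p^{l_i}}$ contributes exactly $p^{l_i} = |H_i|$; this is where I would cite the relevant result on $c$ for abelian $p$-groups (analogous to the formula $c(Z(G)) = \big(\tfrac{|G|}{|Z(G)|}\big)^{1/2}$-type expressions appearing in~\cite{HB,AB}) rather than recompute it. Once that identification is in hand, substituting $\sum_{i=1}^{r}|H_i| = c(Z(G))$ into the collapsed formula gives $c(G) = |G:Z(G)|^{1/2}\, c(Z(G))$, completing the proof. The argument is short precisely because Theorem~\ref{T1} has already done the representation-theoretic work; the corollary is the clean extraction of the case where every generator of $Z(G)$ is "used up" by the commutator subgroup.
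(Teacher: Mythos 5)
Your proposal is correct, and it takes a slightly different (and cleaner) route than the paper. You treat the corollary as a pure specialization of the statement of Theorem \ref{T1}: with $k=r$ the sum $\sum_{i=k+1}^{r} p^{m_i}$ in \eqref{eq:25} is empty, and the remaining step is the identification $c(Z(G)) = \sum_{i=1}^{r}|H_i| = \sum_{i=1}^{r} p^{l_i}$, which is exactly the result of \cite{HB1997} quoted in Section \ref{section:BR} (for an abelian $p$-group there is no $C_6$ direct summand, so $c = q = \mu = T$); this closes the gap you flagged about additivity over cyclic factors, so no recomputation is needed. The paper, by contrast, does not plug into \eqref{eq:25} at all: it re-runs the character-level argument, using Lemma \ref{L6} to force all $r$ characters in a minimal faithful collection to be non-linear when $k=r$, constructing each $\chi_i$ so that $\chi_i\downarrow_{Z(G)} = \chi_i(1)\lambda_i$ with $\lambda_i$ faithful on $Z(G)/\widehat{H_i}$, computing $m(\chi_i)$ via Lemma 4.5 of \cite{HB}, and then transferring minimality of $\sum_i c(\lambda_i)$ (i.e., of $c(Z(G))$) to minimality of $\sum_i c(\chi_i)$. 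Both arguments ultimately rest on the same two facts---each non-linear character contributes $|G:Z(G)|^{1/2}|H_i|$, and $c(Z(G))=\sum_i |H_i|$---so the content is the same; your version buys brevity and a transparent logical dependence (theorem plus the abelian $p$-group formula), while the paper's version is self-contained at the level of characters and does not require the reader to have tracked formula \eqref{eq:25} through the proof of the theorem.
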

\noindent Since VZ-group is a nilpotent group of class $2$, we deduce the following result for VZ-group of odd order.  
\begin{corollary} \label{C3}
	Let $G$ be a VZ group of odd order. Then $c(G)=\mu(G)=q(G)$. 
\end{corollary}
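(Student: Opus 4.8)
The plan is to reduce the statement to the $p$-group case already settled in Theorem~\ref{T1} and in \cite{BG}, exploiting the nilpotency of $VZ$-groups together with an additivity principle for coprime direct factors. First I would record the structural input emphasized in the text preceding the corollary: since $G$ is a $VZ$-group it is nilpotent of class $2$, so it is the internal direct product $G = P_{1} \times \cdots \times P_{s}$ of its Sylow subgroups, and because $|G|$ is odd every $P_{i}$ is an odd $p_{i}$-group. As noted there, at most one of the $P_{i}$ is nonabelian; relabelling, take $P_{1}$ to be the (possibly nonabelian) factor and $P_{2},\ldots,P_{s}$ abelian. One checks that $P_{1}$ inherits the $VZ$ property: writing a nonlinear irreducible character of $G$ as $\chi \otimes \lambda_{2} \otimes \cdots \otimes \lambda_{s}$ with $\chi \in \nl(P_{1})$ and each $\lambda_{i}$ linear, it vanishes off $Z(G) = Z(P_{1}) \times P_{2} \times \cdots \times P_{s}$ exactly when $\chi$ vanishes off $Z(P_{1})$, since the linear tensor factors never vanish.

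Next I would apply the $p$-group results factor by factor. The nonabelian factor $P_{1}$ is a $VZ$ $p$-group for the odd prime $p_{1}$, so Theorem~\ref{T1} gives $c(P_{1}) = q(P_{1}) = \mu(P_{1})$; each abelian $P_{i}$ ($i \ge 2$) is a $p_{i}$-group with $p_{i}$ odd, so \cite{BG} gives $c(P_{i}) = q(P_{i}) = \mu(P_{i})$. Thus every Sylow factor satisfies $c(P_{i}) = q(P_{i}) = \mu(P_{i})$.

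The crux is then an additivity statement for $c$ over coprime direct factors, combined with the classical additivity of $\mu$. Iterating the Sylow decomposition, Wright's theorem yields $\mu(G) = \sum_{i=1}^{s} \mu(P_{i})$. Granting the analogous additivity for $c$, I would conclude
\[
c(G) = \sum_{i=1}^{s} c(P_{i}) = \sum_{i=1}^{s} \mu(P_{i}) = \mu(G).
\]
Since $c(G) \le q(G) \le \mu(G)$ always holds, the single equality $c(G) = \mu(G)$ already forces $q(G) = \mu(G)$, so that $c(G) = q(G) = \mu(G)$, as required. Note that it is only nilpotency (hence the $VZ$ hypothesis) that is genuinely needed to pass to the Sylow decomposition.

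I expect the additivity of $c$ over coprime factors to be the main obstacle, since restriction to a single factor only yields $c(G) \ge \max_{i} c(P_{i})$, not the full sum. To obtain the superadditive direction $c(G) \ge \sum_{i} c(P_{i})$ I would argue character-theoretically: as the $|P_{i}|$ are pairwise coprime, $\Irr(G)$ consists of the tensor products of the $\Irr(P_{i})$, the field $\mathbb{Q}(\chi_{1}\otimes\cdots\otimes\chi_{s})$ is the compositum of the $\mathbb{Q}(\chi_{i})$ with compatible Galois action, and a character of $G$ is faithful precisely when each of its factors is faithful on $P_{i}$. Feeding this into the explicit description of a minimal faithful quasi-permutation character (a nonnegative integral combination of rational Galois-orbit sums, rendered nonnegative by a trivial-character correction), one shows the optimal correction for the product decouples into the optimal corrections for the factors, with coprimality ensuring that no cross term lowers the total degree below $\sum_{i} c(P_{i})$. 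If the additivity of $c$ (and $q$) over coprime direct factors is already available in \cite{HB, BG, BGHS}, this step may instead simply cite it.
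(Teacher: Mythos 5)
Your proposal is correct and follows essentially the same route as the paper: decompose $G$ into its Sylow subgroups with a single nonabelian $VZ$ factor, apply Wright's additivity for $\mu$ (Theorem \ref{thm:nilpotent}), use the odd-$p$-group equality $c=q=\mu$ (Theorem \ref{thm:pgroup}) on each factor, and invoke additivity of $c$ over the Sylow factors. The coprime additivity of $c$ that you flag as the main obstacle is precisely the step the paper handles by citing Ghaffarzadeh's result on nilpotent groups \cite{MG}, so no new argument is needed there.
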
 	

Further, we examine $c(G)$ for the family of Camina $p$-groups. In \cite{DS}, authors proved that if $G$ is a
 finite Camina $p$-group, then the nilpotency class of $G$ is at most 3. It is easy to see that if $G$ is Camina $p$-group of nilpotency class 2, then $G$ is indeed a VZ-group with $Z(G)=G{}'$ and hence by Corollary \ref{C2}, we get $c(G)=|G/Z(G)|^{1/2}c(Z(G))$.  For Camina $p$-group of nilpotency class 3, we have proved the following result. 
		
	
		\begin{theorem} \label{T2}
		Let $G$ be a Camina $p$-group of nilpotency class $3$ (prime $p$). Then
		\begin{equation} \label{eq:14}
		c(G) = |G/Z(G)|^{1/2}c(Z(G)).
		\end{equation} 
Moreover, when $p$ is an odd prime, we get $c(G) =\mu(G)=q(G)=|G/Z(G)|^{1/2}c(Z(G)).$
	\end{theorem}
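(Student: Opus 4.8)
The plan is to reduce the whole computation of $c(G)$ to the restriction to the center, exploiting the sharp character theory of class-$3$ Camina $p$-groups. First I would record the structural input, drawn from \cite{DS} together with the theory of Camina pairs: since $G$ has class $3$, the derived subgroup $G'$ is abelian, $Z(G)=\gamma_3(G)=[G',G]\le G'$, and $(G,G')$ is a Camina pair, so $\cd(G)=\{1,\,|G/G'|^{1/2},\,|G/Z(G)|^{1/2}\}$. Call a character \emph{deep} if it does not contain $Z(G)$ in its kernel; these are precisely the characters of maximal degree $|G/Z(G)|^{1/2}$, each deep $\chi$ vanishes off $Z(G)$ and satisfies $\chi_{Z(G)}=\chi(1)\lambda$ for a nontrivial $\lambda\in\Irr(Z(G))$. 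Since the non-deep irreducibles are exactly $\Irr(G/Z(G))$, we have $\sum_{\mathrm{deep}}\chi(1)^2=|G|-|G/Z(G)|=|G/Z(G)|\,(|Z(G)|-1)$, so the number of deep characters equals $|Z(G)|-1$; hence $\chi\mapsto\lambda$ is a bijection onto $\Irr(Z(G))\setminus\{1\}$. As $\ker\chi^{\sigma}=\ker\chi$ and $\chi^{\sigma}$ again lies over $\lambda^{\sigma}$, this bijection is Galois-equivariant, and $\mathbb{Q}(\chi_\lambda)=\mathbb{Q}(\lambda)$ because $\chi(1)\in\mathbb{Z}$.

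The crucial consequence is that $\ker\chi_\lambda=\ker\lambda\le Z(G)$ for every deep character, while every non-deep irreducible is trivial on $Z(G)$. Thus faithfulness of any quasi-permutation representation is detected on $Z(G)$ and must be supplied by deep characters whose central characters $\{\lambda_i\}$ form a faithful set for the abelian group $Z(G)$. For the upper bound I would start from a minimal faithful quasi-permutation character $\zeta=\sum_O b_O\,\Theta_{Z(G)}(\lambda_O)+m_0\cdot 1_{Z(G)}$ of $Z(G)$ of degree $c(Z(G))$, where $\Theta_H(\theta)=\sum_{\sigma\in\gal(\mathbb{Q}(\theta)/\mathbb{Q})}\theta^{\sigma}$ is a Galois orbit sum and $m_0=-\min_{Z(G)}\sum_O b_O\Theta_{Z(G)}(\lambda_O)$. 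Lifting each $\lambda_O$ to its deep character $\chi_{\lambda_O}$ and setting $\Psi=\sum_O b_O\,\Theta_G(\chi_{\lambda_O})$, the vanishing of $\chi_\lambda$ off $Z(G)$ gives $\Psi\equiv 0$ off $Z(G)$ and $\Psi=|G/Z(G)|^{1/2}\,(\zeta-m_0\cdot 1)$ on $Z(G)$. Adding $|G/Z(G)|^{1/2}m_0$ copies of $1_G$ then yields a faithful quasi-permutation character of $G$ of degree exactly $|G/Z(G)|^{1/2}\,\zeta(1)=|G/Z(G)|^{1/2}c(Z(G))$, so $c(G)\le |G/Z(G)|^{1/2}c(Z(G))$.

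For the reverse inequality I would take any faithful quasi-permutation character $\xi$ of $G$ and restrict it to $Z(G)$. Writing $\xi$ as a non-negative integral combination of Galois orbit sums, the non-deep orbits restrict to multiples of $1_{Z(G)}$ and the deep orbits restrict to $|G/Z(G)|^{1/2}\Theta_{Z(G)}(\lambda_O)$, whence $\xi_{Z(G)}=c_0\cdot 1_{Z(G)}+|G/Z(G)|^{1/2}\,\zeta_A$ for some integer $c_0\ge 0$ and some integral combination $\zeta_A$ of nontrivial orbit sums that is faithful on $Z(G)$. Setting $\zeta_A^{*}=\zeta_A+m\cdot 1_{Z(G)}$ with $m=-\min_{Z(G)}\zeta_A$ produces a faithful quasi-permutation character of $Z(G)$, so $c(Z(G))\le\zeta_A(1)+m$. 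The inequality $\min_{Z(G)}\xi_{Z(G)}\ge 0$, valid because $\xi_{Z(G)}$ is itself a quasi-permutation character, reads $c_0+|G/Z(G)|^{1/2}\min_{Z(G)}\zeta_A\ge 0$, i.e. $|G/Z(G)|^{1/2}m\le c_0$. Combining, $|G/Z(G)|^{1/2}c(Z(G))\le |G/Z(G)|^{1/2}\zeta_A(1)+c_0=\xi(1)$, and minimizing over $\xi$ gives $c(G)\ge|G/Z(G)|^{1/2}c(Z(G))$, which with the upper bound proves \eqref{eq:14}.

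Finally, when $p$ is odd the equality $c(G)=q(G)=\mu(G)$ follows immediately from \cite{BG}, which gives $c(G)=q(G)=\mu(G)$ for every $p$-group of odd order. The main obstacle is the structural package of the first paragraph, namely that the deep characters are fully ramified over $Z(G)$, have degree exactly $|G/Z(G)|^{1/2}$, vanish off $Z(G)$, and biject Galois-equivariantly with $\Irr(Z(G))\setminus\{1\}$; once this is established the two-sided estimate is a formal computation with Galois orbit sums, entirely parallel to the treatment of $VZ$-groups in Theorem \ref{T1} and Corollary \ref{C2}.
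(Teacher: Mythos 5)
Your proposal is correct, and it reaches \eqref{eq:14} by a genuinely different route than the paper. The paper first proves Lemma \ref{lem:caminaclass3} (any minimal family of irreducible characters with trivial kernel intersection lies entirely in $\Irr(G|Z(G))$) and then feeds this into the quasi-permutation machinery of Lemmas \ref{L5} and \ref{L1}: since $Z(G)$ is elementary abelian of rank $r$, every $\chi\in\Irr(G|Z(G))$ has $|\Gamma(\chi)|=p-1$, $\chi(1)=p^{3n/2}$ and $m(\chi)=\chi(1)$, so an explicit family of $r$ such characters yields $c(G)=rp^{3n/2+1}$, which is then matched against the known value $c(Z(G))=rp$. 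You instead prove the identity $c(G)=|G/Z(G)|^{1/2}\,c(Z(G))$ directly as a two-sided estimate: the upper bound by lifting a minimal faithful quasi-permutation character of $Z(G)$ through the Galois-equivariant bijection $\chi\mapsto\lambda$ and padding with trivial characters, and the lower bound by restricting an arbitrary faithful quasi-permutation character of $G$ to $Z(G)$, where the non-deep orbits only contribute a nonnegative constant $c_0$ that must absorb the shift $|G/Z(G)|^{1/2}m$. Both arguments rest on the same structural package --- $(G,Z(G))$ is a Camina pair, so characters in $\Irr(G|Z(G))$ vanish off $Z(G)$, are fully ramified, and satisfy $\ker\chi=\ker\lambda$ --- but your route bypasses Lemma \ref{L1} (that $|I|=d(Z(G))$), never computes $c(Z(G))$ explicitly, and never uses that $Z(G)$ is elementary abelian; consequently it proves $c(G)=|G/Z(G)|^{1/2}c(Z(G))$ for any group in which $(G,Z(G))$ is a Camina pair, recovering the class-$2$ case (Corollary \ref{C2} for Camina groups) by the same argument. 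What it gives up is the explicit minimal character family and the closed-form value $rp^{3n/2+1}$ that the paper's computation produces. Two points of attribution and hygiene: the facts that $(G,Z(G))$ is a Camina pair, that $Z(G)=G_3$, and that $\cd(G)=\{1,p^n,p^{3n/2}\}$ are Macdonald's results (Lemmas \ref{L3} and \ref{L7}, cited from \cite{M1}) together with \cite{PS}, not consequences of \cite{DS} alone; and in the lower bound you should note that faithfulness of $\xi$ forces at least one deep orbit to occur and that $\min_{Z(G)}\zeta_A<0$ (since $\langle\zeta_A,1_{Z(G)}\rangle=0$), so $m>0$ and $\zeta_A+m\cdot 1_{Z(G)}$ is indeed a faithful quasi-permutation character of $Z(G)$ --- both are immediate but worth stating.
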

In Section \ref{section:BR}, we briefly summarize the background results that will be used in
the course of this paper. In Section \ref{section:VZ}, we first try to find out $c(G)$ for VZ $p$-group by explicit description of the set $X_G\subseteq \Irr(G)$. Since VZ-groups are nilpotent groups, we get $c(G)=\mu(G)$, for VZ-groups of odd order. In Section \ref{sec:isoclexam}, by using the concept of isoclinism, we have shown that: 
If $G$ and $H$ are two isoclinic VZ $p$-groups of same order such that $d(Z(G)) = d(G{}')$, then $c(G) = c(H)$ if and only if $Z(G)$ is isomorphic to $Z(H)$ (Corollary \ref{extra1}). By using Corollary \ref{extra1} and Theorem \ref{T1}, we have computed $c(G)$ for all the $p$-groups of size $p^6$ (for an odd prime $p$)(see Table \ref{t:1}, Table \ref{t:2} and Corollary \ref{extra2}).
In the last section, we have calculated $c(G)$ for Camina $p$-group and provided $X_G$. 
	\section{Basic results on $c(G),~ q(G)$ and $\mu(G)$}\label{section:BR}
 Suppose $F$ is a subfield of $E$, where $E$ is a splitting field for $G$. Let $\chi, ~\psi \in \Irr_E(G)=\Irr(G)$. 
We say that $\chi$ and $\psi$ are Galois conjugate over $F$ if $F(\chi)=F(\psi)$ and there exists $\sigma \in \gal(F(\chi)/F)$ such that $\chi^{\sigma}= \psi$,
where $F(\chi)$ denotes the field obtained by adjoining  the values $\chi(g)$, for all
$g \in G$, to $F$. Indeed, this defines an equivalence relation on $\Irr_E(G)$. Moreover, if $\mathcal{C}$ denotes the equivalence class of $\chi$ with respect to Galois conjugacy over $F$, then $|\mathcal{C}|=| F(\chi) : F |$ (see \cite[Lemma 9.17]{I}).	
	
 Suppose $\chi$ is an irreducible  character of a group $G$ and $\Gamma(\chi)$ denotes the Galois group of $\mathbb{Q}(\chi)$ over $\mathbb{Q}$. 
 Let $\mathcal{C}_{i}$, for $0\leq i\leq r$, be the Galois conjugacy classes of irreducible characters of the group $G$. For $0\leq i \leq r$, let $\psi_{i}$ be a representative of the class $\mathcal{C}_{i}$ with $\psi_{0}=1_{G}$ (trivial character). Let us write $\Theta_{i}= \sum_{\chi\in \mathcal{C}_{i}} \chi$,  and $K_{i}=\ker (\psi_{i}).$ For $I\subseteq \{ 0, 1, \ldots, r \}$, set $K_{I}=\cap_{i\in I}K_{i}.$ Under the above set up, we have following definition and results which we use to calculate $c(G)$. 
	\begin{definition} \label{D1}
		Let $G$ be a finite group. Let $\chi$ be an irreducible complex character of $G$. Then define 
		\begin{enumerate}
			\item [\rmfamily(i)] $d(\chi)=|\Gamma(\chi)|\chi(1)$.
			\item [\rmfamily(ii)] $ m(\chi)=
			\begin{cases}
			0 &\quad \text{ if } \chi= 1_{G},\\
			\left|\min \left\{ \sum\limits_{\sigma\in \Gamma(\chi)} \chi^{\sigma}(g): g\in G \right\}\right| &\quad \text{ otherwise.} 
			\end{cases}$
		\end{enumerate}
	\end{definition}
	
	\begin{lemma}\textnormal{\cite[Lemma 2.2]{BG}}\label{L5}
		Let $G$ be a finite group. Then 
		\begin{align*}
		c(G)= \min \Bigg\{ &\xi(1)+ m(\xi): \xi=\sum\limits_{i\in I}\Theta_{i}, K_{I}=1, \textnormal{for}~ I\subseteq \{ 1,\ldots,r \}~  
		\textnormal{and}~ K_{J}\neq 1, \textnormal{ if } J\subset I \Bigg\}
		\end{align*}
and 		
\begin{align*}
		q(G)= \min \Bigg\{ &\xi(1)+ m(\xi): \xi=\sum\limits_{i\in I}m_{\mathbb{Q}}(\psi_i)\Theta_{i}, K_{I}=1, \textnormal{for}~ I\subseteq \{ 1,\ldots, r \} 
		\textnormal{and}~ K_{J}\neq 1, \textnormal{ if } J\subset I \Bigg\}, 
		\end{align*}			
where 	$m_{\mathbb{Q}}(\psi_i)$ denotes the Schur index of $\psi_i$ over $\mathbb{Q}$. 	
	\end{lemma}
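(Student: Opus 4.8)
The plan is to prove both equalities by squeezing the minimal degree between matching bounds, exploiting the rigidity that a quasi-permutation character must be rational-integer valued and hence fixed by every Galois automorphism. Throughout I read $m(\xi)$ for $\xi=\sum_{i\in I}\Theta_i$ as $|\min_{g\in G}\xi(g)|$, consistent with Definition \ref{D1}.

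First I would set up the reduction. A faithful quasi-permutation representation affords a character $\zeta$ with all values in $\mathbb{Z}_{\geq 0}$; being rational-valued, $\zeta^{\sigma}=\zeta$ for every $\sigma\in\gal(\mathbb{Q}(\zeta)/\mathbb{Q})$, so the multiplicity of an irreducible constituent is constant along each Galois conjugacy class. Hence $\zeta=a_{0}1_{G}+\sum_{i=1}^{r}a_{i}\Theta_{i}$ for non-negative integers $a_{i}$, and each $\Theta_{i}$ is integer-valued since its values are Galois-fixed algebraic integers. Setting $I=\{i\geq 1:a_{i}>0\}$ and using that Galois-conjugate characters have a common kernel, so that $\ker\Theta_{i}=K_{i}$, faithfulness of $\zeta$ becomes exactly $\bigcap_{i\in I}K_{i}=K_{I}=1$. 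This matches the admissible supports $I$ in the statement. For the bound from above I would verify that, for any $I$ with $K_{I}=1$, the class function $\xi+m(\xi)1_{G}$ with $\xi=\sum_{i\in I}\Theta_{i}$ is a genuine faithful quasi-permutation character: it is integer-valued, the constant $m(\xi)$ renders every value non-negative, and adjoining copies of $1_{G}$ keeps the kernel at $K_{I}=1$. Its degree is $\xi(1)+m(\xi)$, whence $c(G)\leq\min\{\,\xi(1)+m(\xi)\,\}$.

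The heart of the argument, and the step I expect to be the main obstacle, is the reverse inequality: I must show an optimal $\zeta$ can be replaced, without raising the degree, by one in which every $\Theta_{i}$ occurs with multiplicity exactly one and $I$ is minimal for $K_{I}=1$. To cut a multiplicity $a_{i}\geq 2$ down to $1$, I would form $\zeta'=a_{0}'1_{G}+\Theta_{i}+\sum_{j\neq i}a_{j}\Theta_{j}$ with $a_{0}'$ minimal making all values of $\zeta'$ non-negative. Writing $A=\min_{g}\sum_{j}a_{j}\Theta_{j}(g)$ and $B=\min_{g}\bigl(\Theta_{i}+\sum_{j\neq i}a_{j}\Theta_{j}\bigr)(g)$, evaluation at a point $g_{0}$ realizing $B$ gives $A\leq B+(a_{i}-1)\Theta_{i}(g_{0})$, and combining this with the elementary bound $\Theta_{i}(g_{0})\leq\Theta_{i}(1)$ yields $\zeta'(1)\leq\zeta(1)$. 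A parallel estimate $|\min_{g}(\xi-\Theta_{i})(g)|\leq m(\xi)+\Theta_{i}(1)$ shows that discarding a redundant index (one for which $K_{I\setminus\{i\}}=1$ still holds) also does not increase $\xi(1)+m(\xi)$. Iterating these two reductions drives $\zeta$ to a character of the asserted form of degree $\leq c(G)$; since that character is itself faithful quasi-permutation, its degree is $\geq c(G)$, so equality holds and $c(G)=\min\{\,\xi(1)+m(\xi)\,\}$.

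Finally, for $q(G)$ the same scheme runs over $\mathbb{Q}$, the sole change being that a rational representation decomposes not into the $\Theta_{i}$ but into the rationally irreducible characters $m_{\mathbb{Q}}(\psi_{i})\Theta_{i}$, where the Schur index $m_{\mathbb{Q}}(\psi_{i})$ counts how many copies of the Galois orbit of $\psi_{i}$ are forced when a constituent is realized over $\mathbb{Q}$. Replacing $\Theta_{i}$ by $m_{\mathbb{Q}}(\psi_{i})\Theta_{i}$ throughout the support identification, the multiplicity-reduction, and the minimality-trimming steps delivers the stated formula for $q(G)$.
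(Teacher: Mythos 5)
Your proposal is correct, but there is nothing in the paper to compare it against: the paper states this lemma as a quoted result from \cite[Lemma 2.2]{BG} and supplies no proof, so your argument must be judged on its own merits rather than against an internal argument. Judged that way, it is a sound reconstruction of the standard proof and the key steps all check out. Rational-valuedness of a quasi-permutation character forces its irreducible multiplicities to be constant on Galois conjugacy classes, giving $\zeta=a_{0}1_{G}+\sum_{i}a_{i}\Theta_{i}$ with $\ker\zeta=K_{I}$ for $I=\{i\geq 1 : a_{i}>0\}$, since Galois-conjugate characters share a kernel. Your multiplicity-reduction inequality is valid: at a minimizer $g_{0}$ of $B$ one gets $\zeta(1)-\zeta'(1)\geq (a_{i}-1)\bigl(\Theta_{i}(1)-\Theta_{i}(g_{0})\bigr)\geq 0$, because the rational value $\Theta_{i}(g_{0})$ is bounded above by $\Theta_{i}(1)$. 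The trimming estimate $m(\xi-\Theta_{i})\leq m(\xi)+\Theta_{i}(1)$ likewise shows that discarding an index $i$ with $K_{I\setminus\{i\}}=1$ never increases $\xi(1)+m(\xi)$, which produces the minimality condition $K_{J}\neq 1$ for $J\subset I$ demanded in the statement; and the $q(G)$ case is indeed the same reduction applied to the rationally irreducible characters $m_{\mathbb{Q}}(\psi_{i})\Theta_{i}$, whose realizability over $\mathbb{Q}$ also gives the upper bound in that case. One small point you use tacitly and should make explicit: for nontrivial $\xi$ the minimum $\min_{g}\xi(g)$ is automatically strictly negative, since $\sum_{g\in G}\xi(g)=0$ while $\xi(1)>0$, so $m(\xi)=-\min_{g}\xi(g)$ and the corrective constant $m(\xi)1_{G}$ is both necessary and minimal; with that observation in place, both displayed formulas follow by combining your upper bound with the reduction of an optimal character to the stated normal form.
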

If $G$ is a $p$-group ($p$ odd prime), then by Lemma \ref{L5}, $c(G)=q(G)$ because $m_{\mathbb{Q}}(\psi)=1$ for every $\psi \in \Irr(G)$. 
	\begin{lemma}\textnormal{\cite[ Corollary 2.6]{BD}} \label{L1}
		Let $G$ be a $p$-group whose center $Z(G)$ is minimally generated by $d$ elements. Let $c(G) = \xi(1)+m(\xi)$ and $\xi = \sum_{i\in I} \Psi_{i}.$ Let $\Psi_{i}'s$ satisfy the conditions of the algorithm for $c(G).$ Then
		\begin{enumerate}
			\item [\rmfamily(i)] $m(\xi) = \frac{1}{p-1} \sum\limits_{i\in I} \Psi_{i},$
			\item [\rmfamily(ii)] $|I| = d.$
		\end{enumerate}
	\end{lemma}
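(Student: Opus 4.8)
The plan is to reduce everything to linear algebra on the socle $V:=\Omega_1(Z(G))$ of the center, which is an elementary abelian $p$-group of rank $d=d(Z(G))$, hence a $d$-dimensional vector space over $\mathbb{F}_p$. Write $\psi_i$ ($i\in I$) for a representative of the Galois class underlying the orbit sum $\Psi_i$ (the $\Theta_i$ of Lemma \ref{L5}, so $\Psi_i=\mathrm{Tr}_{\mathbb{Q}(\psi_i)/\mathbb{Q}}\circ\psi_i$ is rational-valued), and let $\omega_i$ be the central character of $\psi_i$. The restriction $\omega_i|_V$ takes values in the group $\mu_p$ of $p$-th roots of unity; fixing an isomorphism $\mu_p\cong\mathbb{F}_p$ turns each $\omega_i|_V$ into a linear functional $f_i\in V^*$, and for $v\in V$ one has $v\in\ker\psi_i\iff f_i(v)=0$. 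The basic observation, valid in any $p$-group, is that a normal subgroup $N\trianglelefteq G$ is trivial iff $N\cap V=1$, because a nontrivial normal subgroup of a $p$-group meets the center and hence meets $V$. Applied to $K_J=\bigcap_{i\in J}\ker\psi_i$ this gives $K_J=1\iff\bigcap_{i\in J}\ker f_i=0$.

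For part (ii) I would argue purely with these functionals. The faithfulness condition $K_I=1$ says $\bigcap_{i\in I}\ker f_i=0$, i.e. the $f_i$ span $V^*$; the minimality condition $K_J\neq1$ for every proper $J\subsetneq I$ says that no proper subfamily spans, i.e. $\{f_i\}_{i\in I}$ is linearly independent. An independent spanning family of $V^*$ is a basis, so $|I|=\dim V^*=d$. The same computation shows each $f_i\neq0$ (otherwise $\ker f_i=V$ and $i$ could be deleted without affecting $K_I$), a fact I will reuse in part (i).

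For part (i) I first prove the uniform lower bound $\Theta_\chi(g)\geq-\tfrac{1}{p-1}\Theta_\chi(1)$ for every nontrivial irreducible $\chi$ and every $g\in G$, where $\Theta_\chi=\sum_{\sigma\in\Gamma(\chi)}\chi^\sigma$. Since $\{\chi^\sigma(g)\}=\{\sigma(\chi(g))\}$, one has $\Theta_\chi(g)=\mathrm{Tr}_{\mathbb{Q}(\chi)/\mathbb{Q}}(\chi(g))$; choosing $a$ with $\mathbb{Q}(\chi)\subseteq\mathbb{Q}(\zeta_{p^a})$ and all eigenvalues of $\rho(g)$ being $p^a$-th roots of unity, the tower law gives $\Theta_\chi(g)=[\mathbb{Q}(\zeta_{p^a}):\mathbb{Q}(\chi)]^{-1}\,\mathrm{Tr}_{\mathbb{Q}(\zeta_{p^a})/\mathbb{Q}}(\chi(g))$. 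Expanding $\chi(g)$ as a sum of $\chi(1)$ eigenvalues and using that the trace over $\mathbb{Q}$ of a $p^a$-th root of unity lies in $\{\phi(p^a),-p^{a-1},0\}$, the right-hand side is at least $-\chi(1)p^{a-1}/[\mathbb{Q}(\zeta_{p^a}):\mathbb{Q}(\chi)]$, which simplifies via $[\mathbb{Q}(\zeta_{p^a}):\mathbb{Q}]=p^{a-1}(p-1)$ and $\Theta_\chi(1)=[\mathbb{Q}(\chi):\mathbb{Q}]\chi(1)$ to exactly $-\tfrac{1}{p-1}\Theta_\chi(1)$. Summing over $i\in I$ yields $\xi(g)\geq-\tfrac{1}{p-1}\xi(1)$ for all $g$.

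It then remains to exhibit a single element at which all the $\Theta_i$ attain this bound simultaneously, which is where part (ii) pays off. Since $\{f_i\}_{i\in I}$ is a basis of $V^*$, I take $\{v_i\}$ to be the dual basis of $V$ and set $v=\sum_{i\in I}v_i$, so that $f_i(v)=1$ for every $i$; then $\omega_i(v)$ is a primitive $p$-th root of unity, $\rho_i(v)=\omega_i(v)I$ is scalar, and a direct trace computation gives $\Theta_i(v)=-\tfrac{1}{p-1}\Theta_i(1)$ for each $i$. Hence $\xi(v)=-\tfrac{1}{p-1}\xi(1)$ realizes the lower bound, so $\min_g\xi(g)=-\tfrac{1}{p-1}\xi(1)$ and (as $\xi$ is rational-valued) $m(\xi)=|\min_g\xi(g)|=\tfrac{1}{p-1}\xi(1)=\tfrac{1}{p-1}\sum_{i\in I}\Psi_i(1)$, proving (i). I expect the main obstacle to be the lower-bound step, namely controlling $\mathrm{Tr}_{\mathbb{Q}(\chi)/\mathbb{Q}}(\chi(g))$ uniformly in $g$ when $\mathbb{Q}(\chi)$ is a proper subfield of $\mathbb{Q}(\zeta_{p^a})$; this is exactly where the tower law and the explicit traces of $p$-power roots of unity must be combined carefully, while the simultaneity trick via the dual basis is the clean payoff that ties (i) to (ii).
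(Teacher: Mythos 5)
Your argument is correct, but there is nothing in the paper to compare it against: Lemma~\ref{L1} is quoted verbatim from \cite[Corollary 2.6]{BD} and no proof is reproduced here, so your proposal has to be judged on its own merits and against the cited source. Judged that way, it holds up. For (ii), the reduction to the socle $V=\Omega_1(Z(G))$ is exactly the right move: a nontrivial normal subgroup of a $p$-group meets $Z(G)$ and hence meets $V$, so $K_J=1$ if and only if $\bigcap_{i\in J}\ker f_i=0$, and the two conditions of the algorithm in Lemma~\ref{L5} translate precisely into ``the $f_i$ span $V^{*}$'' and ``no proper subfamily spans,'' forcing $\{f_i\}_{i\in I}$ to be a basis and $|I|=\dim V^{*}=d$ (note that $d(Z(G))=\dim_{\mathbb{F}_p}\Omega_1(Z(G))$ for an abelian $p$-group, as you implicitly use). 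For (i), the tower-law computation is sound: the trace from $\mathbb{Q}(\zeta_{p^a})$ to $\mathbb{Q}$ of a $p^a$-th root of unity lies in $\{p^{a-1}(p-1),\,-p^{a-1},\,0\}$, and dividing by $[\mathbb{Q}(\zeta_{p^a}):\mathbb{Q}(\chi)]$ yields the uniform bound $\Theta_\chi(g)\ge -\frac{1}{p-1}\Theta_\chi(1)$; then the dual-basis element $v$ with $f_i(v)=1$ for all $i$ makes each $\omega_i(v)$ a primitive $p$-th root of unity lying in $\mathbb{Q}(\psi_i)$, so that $\Theta_i(v)=-\frac{1}{p-1}\Theta_i(1)$ holds \emph{simultaneously} for all $i$, giving $m(\xi)=\frac{1}{p-1}\xi(1)$. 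Your explicit recognition that this simultaneity is not automatic, but is delivered by the basis structure from (ii), is exactly where a sloppier argument would have a gap, and the dual-basis element closes it cleanly. Two further remarks: you silently (and correctly) repair the typo in the statement, reading $\frac{1}{p-1}\sum_{i\in I}\Psi_i$ as $\frac{1}{p-1}\sum_{i\in I}\Psi_i(1)=\frac{1}{p-1}\xi(1)$; and your per-character trace estimate is the same computation the present paper invokes elsewhere as \cite[Lemma 4.5]{HB} (in the proof of Corollary~\ref{C2}), so your route agrees in substance with the Behravesh line of argument --- central characters and cyclic quotients of $Z(G)$ --- while the packaging of (ii) via duality in $V^{*}$ is a more streamlined presentation than the source's.
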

Suppose $G$ is an abelian group isomorphic to $\prod_{i=1}^k C_{p_i^{r_i}}$, where $C_{p_i^{r_i}}$'s are cyclic groups of prime power order $p_i^{r_i}$. Define $T(G)=\sum_{i=1}^k p_i^{r_i}$. In \cite{HB1997}, author proved that $c(G)=T(G)-n$, where $n$ is the largest integer such that, $C_6^n$ is a direct summand of an abelian group $G$. Further, if $G$ is an abelian $p$-group, then $c(G)=q(G)=\mu(G)=T(G)$. 

\begin{theorem}\textnormal{\cite[Corollary 2.2]{DW}}\label{thm:nilpotent} If $H, K$ are  non-trivial nilpotent groups, then $\mu(H \times K)= \mu(H)+ \mu(K)$.
\end{theorem}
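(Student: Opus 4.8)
The plan is to prove the two inequalities in $\mu(H\times K)=\mu(H)+\mu(K)$ separately, with the upper bound being routine and the lower bound carrying all the content. Throughout I work with the subgroup description of $\mu$ recalled in the introduction: a faithful permutation representation of a group $G$ is a family $\{L_1,\dots,L_m\}$ of subgroups with $\bigcap_i\mathrm{core}_G(L_i)=1$, and its degree is $\sum_i|G:L_i|$. For the upper bound I would take faithful representations of $H$ and of $K$ realizing $\mu(H)$ and $\mu(K)$ and let $H\times K$ act on the disjoint union of the two underlying sets, $H$ acting on the first block and trivially on the second, and symmetrically for $K$; this action is faithful of degree $\mu(H)+\mu(K)$. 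Note this bound holds for \emph{arbitrary} $H,K$ and uses nothing about nilpotence. The whole problem is to show that no faithful representation of $H\times K$ can do strictly better.

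My first reduction would handle the coprime case for arbitrary groups: if $\gcd(|H|,|K|)=1$ then by Goursat's lemma (or a direct order argument using coprimality) every subgroup satisfies $L=(L\cap H)\times(L\cap K)$, whence $\mathrm{core}_{H\times K}(L)=\mathrm{core}_H(L\cap H)\times\mathrm{core}_K(L\cap K)$. Given any faithful family for $H\times K$, I would replace each \emph{mixed} constituent $A\times B$ (with $A\lneq H$, $B\lneq K$, so $|H:A|,|K:B|\ge 2$) by the two constituents $A\times K$ and $H\times B$; since $(a-1)(b-1)\ge0$ gives $|H:A|\,|K:B|\ge|H:A|+|K:B|$ the degree does not increase, and because $\mathrm{core}(A\times K)\cap\mathrm{core}(H\times B)=\mathrm{core}(A\times B)$ the intersection of all cores is unchanged, so faithfulness is preserved. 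The resulting family has only \emph{pure} constituents, which split into a faithful family $\{A_i\}$ for $H$ and a faithful family $\{B_j\}$ for $K$ with total degree at least $\mu(H)+\mu(K)$; hence $\mu(H\times K)\ge\mu(H)+\mu(K)$, giving equality. Applying this across the Sylow decomposition $H\times K=\prod_p(H_p\times K_p)$, whose factors over distinct primes are coprime, reduces the theorem to the case where $H$ and $K$ are $p$-groups for a single fixed prime $p$ (possibly with one of them trivial, where the statement is immediate).

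The same-prime case is the heart of the matter. Here I would exploit that a $p$-group $P$ has central socle $\mathrm{Soc}(P)=\Omega_1(Z(P))$, an $\mathbb{F}_p$-space of dimension $d=d(Z(P))$, just as in Lemma \ref{L1}. Since $\mathrm{Soc}(P)$ is central, $\mathrm{Soc}(P)\cap L=\mathrm{Soc}(P)\cap\mathrm{core}_P(L)$ for every $L$, so a family $\{L_i\}$ is faithful if and only if $\bigcap_i(\mathrm{Soc}(P)\cap L_i)=0$; combined with the super-additivity $p^{a+b}\ge p^a+p^b$, which lets one split any constituent whose socle intersection has codimension $\ge2$ without increasing degree, this yields the description $\mu(P)=\min\sum_i\gamma_P(W_i)$, the minimum taken over families of hyperplanes $W_i\le\mathrm{Soc}(P)$ with $\bigcap_iW_i=0$, where $\gamma_P(W)=\min\{|P:L|:\mathrm{Soc}(P)\cap\mathrm{core}_P(L)=W\}$. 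Dually this is a minimum-weight basis problem in $\mathrm{Soc}(P)^*$, the optimal family having exactly $d$ members. For $G=P\times Q$ one has $\mathrm{Soc}(G)=\mathrm{Soc}(P)\oplus\mathrm{Soc}(Q)$, and the families built from \emph{adapted} hyperplanes (those containing $\mathrm{Soc}(Q)$, or containing $\mathrm{Soc}(P)$) realize exactly $\mu(P)+\mu(Q)$.

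The main obstacle, and the point where nilpotence is indispensable, is that $\mathrm{Soc}(G)$ also contains \emph{diagonal} hyperplanes mixing the two socles, which an optimal family might prefer: this is already visible for $G=C_p\times C_p$, where the diagonal lines are exactly as cheap as the coordinate axes, and it is precisely the kind of weight asymmetry that makes minimum-weight-basis weight \emph{non}-additive over a direct sum in general. I would therefore isolate and prove the key group-theoretic inequality controlling $\gamma_G$ on a diagonal hyperplane $W$ in terms of its traces $U_P=W\cap\mathrm{Soc}(P)$ and $U_Q=W\cap\mathrm{Soc}(Q)$ — morally that excluding a genuinely mixed central direction is no cheaper than treating the two factors separately, so that $\gamma_G$ agrees with $\gamma_P$ and $\gamma_Q$ on adapted hyperplanes and diagonal hyperplanes are never profitable — and then run a matroid exchange argument showing that a minimum-weight basis of $\mathrm{Soc}(G)^*$ can be chosen adapted to $\mathrm{Soc}(P)^*\oplus\mathrm{Soc}(Q)^*$ without increasing its weight. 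This inequality is exactly where the centrality of the socle enters decisively; for non-nilpotent groups the socle may be non-central and additivity genuinely fails. Once the adapted reduction is established, the minimum splits as a minimum over $\mathrm{Soc}(P)^*$ plus one over $\mathrm{Soc}(Q)^*$, yielding $\mu(P\times Q)\ge\mu(P)+\mu(Q)$ and, combined with the two reductions above, the theorem for all nilpotent $H,K$.
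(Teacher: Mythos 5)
The paper itself gives no proof of this statement---it is quoted directly from Wright \cite{DW}---so your attempt can only be judged on its own merits and against the architecture of Wright's original argument, which you have in outline correctly reproduced. The parts you actually carry out are sound: the disjoint-union upper bound; the coprime case via Goursat's lemma (for coprime orders $L=(L\cap H)\times(L\cap K)$, $\mathrm{core}_{H\times K}(A\times B)=\mathrm{core}_{H}(A)\times\mathrm{core}_{K}(B)$, and replacing a mixed constituent $A\times B$ by $A\times K$ and $H\times B$ costs nothing since $(a-1)(b-1)\geq 1$); the Sylow reduction to a single prime; and the reduction to hyperplane traces in the central socle (taking hyperplanes $W\leq W_j\leq \mathrm{Soc}(G)$ with $\bigcap_j W_j=W$ and setting $L_j=LW_j$, one checks $\mathrm{Soc}(G)\cap L_j=W_j$ exactly, with total index $c\,|G:L|/p^{c-1}\leq |G:L|$, so your splitting step works). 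This framework is also consistent with Lemma \ref{L1} of the paper, which says an optimal family has exactly $d(Z(G))$ members.

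But at the decisive step you prove nothing: the assertion that a mixed (diagonal) hyperplane is ``never profitable'' is precisely the content of Wright's theorem, and your proposal replaces its proof with ``morally'' and ``I would isolate and prove.'' The claim is genuinely nontrivial, and the natural candidates for the required replacement both fail. Given $L\leq P\times Q$ whose socle trace $W=\mathrm{Soc}(G)\cap\mathrm{core}(L)$ is mixed, you must produce $L'$ with $|G:L'|\leq |G:L|$ and socle trace inside the adapted hyperplane $U_P\oplus\mathrm{Soc}(Q)$ (or $\mathrm{Soc}(P)\oplus U_Q$): the candidate $L\cdot\mathrm{Soc}(Q)$ has trace $W\cdot\mathrm{Soc}(Q)=\mathrm{Soc}(G)$, destroying all separating power, while $\pi_P(L)\times Q$ satisfies the index bound $|P:\pi_P(L)|\leq|G:L|$ but gives no control of the trace, since $\pi_P(L)$ can be all of $P$---already for the diagonal subgroup of $C_p\times C_p$, the very example you cite. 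Your matroid exchange is also underspecified: writing a mixed functional as $\phi=\phi^P+\phi^Q$, linear algebra only guarantees that \emph{at least one} of $\phi^P,\phi^Q$ can replace $\phi$ while keeping a basis, and you do not get to choose which; so you need \emph{both} adapted replacements to be no heavier than $\phi$, or an iterative scheme you have not set up. As written, your argument establishes additivity only in the coprime case and otherwise reduces the theorem to an unproved inequality that carries all of the difficulty of the result being cited.
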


\begin{theorem}\textnormal{\cite[Theorem 3.2]{BG}}
\label{thm:pgroup}
Let $G$ be a finite $p$-group. Then $q(G) = \mu(G)$. Moreover, if $p\neq 2$, then $c(G) = q(G) = \mu(G)$.
\end{theorem}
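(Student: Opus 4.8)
The plan is to prove both assertions by establishing the reverse inequalities to the universally valid chain $c(G)\le q(G)\le \mu(G)$ recorded in the introduction. The second assertion is the cheaper one: once $q(G)=\mu(G)$ is in hand, it suffices to see $q(G)\le c(G)$ when $p$ is odd, and by Roquette's theorem (the fact recorded after Lemma~\ref{L5}) every $\psi\in\Irr(G)$ of an odd $p$-group has Schur index $m_{\mathbb{Q}}(\psi)=1$. Thus all weights $m_{\mathbb{Q}}(\psi_i)$ in the formula for $q(G)$ in Lemma~\ref{L5} equal $1$, the two minimisation problems defining $q(G)$ and $c(G)$ become identical, and $c(G)=q(G)$. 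Hence the whole substance lies in the first equality $q(G)=\mu(G)$, for which I only need $\mu(G)\le q(G)$.

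I would prove $\mu(G)\le q(G)$ by induction on $|G|$, organised around the socle $V=\Omega_1(Z(G))\cong C_p^{\,d}$ with $d=d(Z(G))$. Fix a family $\{\psi_i:i\in I\}$ attaining $q(G)$ in Lemma~\ref{L5}, so $\xi=\sum_{i\in I}m_{\mathbb{Q}}(\psi_i)\Theta_i$, $K_I=\bigcap_{i\in I}\ker\psi_i=1$, and $K_J\ne 1$ for every proper $J\subset I$. By Lemma~\ref{L1} and its analogue for $q$ one has $|I|=d$, and each $K_i\cap V$ is a hyperplane of $V$, the $d$ hyperplanes meeting only in $0$. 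The decisive structural observation is that each $\bar G_i:=G/\ker\psi_i$ carries a faithful irreducible character, namely the deflation of $\psi_i$; by the classical criterion that a $p$-group has a faithful irreducible character precisely when its centre is cyclic, $Z(\bar G_i)$ is cyclic. When $d\ge 2$ each $K_i\supseteq K_i\cap V\ne 0$ is nontrivial, so every $\bar G_i$ is a proper quotient. The $i$-th Galois piece already gives a faithful quasi-permutation representation of $\bar G_i$, so $q(\bar G_i)$ is bounded by its contribution $m_{\mathbb{Q}}(\psi_i)\Theta_i(1)+m(m_{\mathbb{Q}}(\psi_i)\Theta_i)$ to $\xi(1)+m(\xi)$, and by the additivity $m(\xi)=\sum_i m(m_{\mathbb{Q}}(\psi_i)\Theta_i)$ following from Lemma~\ref{L1}(i) these contributions total exactly $q(G)$. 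Pulling back a minimal faithful permutation representation of each $\bar G_i$ produces subgroups of $G$ whose cores intersect in $\bigcap_i\ker\psi_i=1$, whence $\mu(G)\le\sum_i\mu(\bar G_i)$. By the inductive hypothesis $\mu(\bar G_i)=q(\bar G_i)$, and combining gives $\mu(G)\le\sum_i q(\bar G_i)\le q(G)\le\mu(G)$, forcing equality.

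This reduces everything to the base case $d=1$, i.e. $G$ with cyclic centre $Z(G)$ of order $p^{n}$, which is where the real work sits. Here $q(G)=\tfrac{p}{p-1}\,m_{\mathbb{Q}}(\chi)\,|\Gamma(\chi)|\,\chi(1)$ for the faithful irreducible $\chi$ minimising this quantity (Lemma~\ref{L1} with $d=1$), and I would exhibit a faithful permutation representation of exactly this degree. Since $p$-groups are monomial, write $\chi=\mathrm{Ind}_U^{G}\lambda$ with $U\supseteq Z(G)$ and $\lambda$ linear; arrange the inducing data so that $\mathrm{ord}(\lambda)=p^{n}$, and set $L=\ker\lambda$. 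Then $L$ is core-free: its core would contain a minimal normal subgroup of $G$, necessarily central of order $p$ and lying in $\ker\lambda\cap Z(G)$, yet $\lambda$ restricts faithfully to the cyclic group $Z(G)$, so $\ker\lambda\cap Z(G)=1$, a contradiction. Consequently $\mathrm{Ind}_L^{G}1$ is a faithful permutation character of degree $|G:L|=\chi(1)\,\mathrm{ord}(\lambda)=\chi(1)p^{n}$. Matching this against $q(G)$ uses $\chi(z)=\chi(1)\zeta$ for a generator $z$ of $Z(G)$ and a primitive $p^{n}$-th root of unity $\zeta$, which forces $\mathbb{Q}(\zeta)\subseteq\mathbb{Q}(\chi)$ and pins down $|\Gamma(\chi)|$, yielding $\mu(G)\le|G:L|=q(G)\le\mu(G)$.

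The main obstacle is precisely this base case: showing that for a cyclic-centre $p$-group the monomial data realising an optimal faithful $\chi$ can always be chosen with $\mathrm{ord}(\lambda)=|Z(G)|$ and $\ker\lambda$ core-free, so that the minimal index of a core-free subgroup equals $\tfrac{p}{p-1}|\Gamma(\chi)|\chi(1)$; when $p=2$ this degree computation additionally requires tracking the Schur-index factor $m_{\mathbb{Q}}(\chi)\in\{1,2\}$, which is exactly why the sharper conclusion $c(G)=\mu(G)$ is only claimed for odd $p$. Everything else---the odd-$p$ Schur-index reduction, the socle/hyperplane bookkeeping, the additivity of the quasi-permutation contributions over the quotients $\bar G_i$, and the pullback inequality for $\mu$---is routine once this core-free-subgroup computation is secured.
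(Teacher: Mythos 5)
First, a point of comparison that matters for this review: the paper itself contains \emph{no} proof of Theorem~\ref{thm:pgroup} --- it is quoted verbatim from \cite[Theorem 3.2]{BG} and used as a black box (just as Lemma~\ref{L1} and Theorem~\ref{thm:nilpotent} are). So there is no internal argument to measure you against; your proposal has to stand on its own as a proof of the cited result, and it does not quite do so. Your outer layers are sound: the deduction $c(G)=q(G)$ for odd $p$ from trivial Schur indices is exactly the paper's own remark after Lemma~\ref{L5}; the induction step is essentially correct (minimality alone already forces $\ker\psi_i\neq 1$ for every $i$ once $|I|\geq 2$, so you do not even need the $|I|=d$ count, only the additivity $m(\xi)=\frac{1}{p-1}\xi(1)$ from Lemma~\ref{L1}(i)); and the pullback inequality $\mu(G)\leq\sum_i\mu(G/\ker\psi_i)$ is fine. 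One simplification you missed: core-freeness of $\ker\lambda$ is automatic for a faithful monomial character, since $\ker\bigl(\mathrm{Ind}_U^G\lambda\bigr)=\mathrm{core}_G(\ker\lambda)$, so your separate argument for it is unnecessary.

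The genuine gap is the one you flag yourself, and it is worse than an unchecked detail: the cyclic-centre base case is the entire content of the theorem, and your stated plan for it is false for $p=2$. You propose to ``arrange the inducing data so that $\mathrm{ord}(\lambda)=p^n$'' with $p^n=|Z(G)|$. Take $G=Q_8$: the faithful irreducible $\chi$ of degree $2$ is induced only from the three subgroups isomorphic to $C_4$, and $\lambda$ must be faithful on $C_4$ (if $\mathrm{ord}(\lambda)\leq 2$ then $\ker\lambda\supseteq Z(G)$ is normal, so $\mathrm{core}_G(\ker\lambda)\neq 1$ and $\mathrm{Ind}\lambda$ is unfaithful); hence $\mathrm{ord}(\lambda)=4>|Z(G)|=2$ for \emph{every} admissible pair. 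The numbers still match ($\mu(Q_8)=8=q(Q_8)$) only because $m_{\mathbb{Q}}(\chi)=2$, and you offer no argument correlating the Schur index with the minimal achievable $\mathrm{ord}(\lambda)$ --- that correlation \emph{is} the theorem for $p=2$. Even for odd $p$ the base case is not routine: since $\mathbb{Q}(\chi)\subseteq\mathbb{Q}(\zeta_{\mathrm{ord}(\lambda)})$ always, your target inequality $\chi(1)\,\mathrm{ord}(\lambda)\leq\frac{p}{p-1}|\Gamma(\chi)|\chi(1)$ forces both that $\mathbb{Q}(\chi)$ is a full cyclotomic field $\mathbb{Q}(\zeta_{p^e})$ and that the optimal faithful $\chi$ admits an inducing linear character of order exactly $p^e$; neither fact is established in your sketch, and neither is an exercise. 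So what you have is a correct reduction of \cite[Theorem 3.2]{BG} to its hardest case, with that case left open.
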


\begin{theorem}\textnormal{\cite[Theorem 4.12]{HB}}
\label{thm:cycliccenter}
Let G be a finite $p$-group of class 2 and let $Z(G)$ be cyclic. Then
$c(G)=c(Z(G))|G/Z(G)|^{1/2}$.
\end{theorem}

	\section{VZ group}\label{section:VZ}
	
Suppose $G$ is a VZ-group. Then  $\chi(g) = 0$, for all $g \in G \setminus Z(G)$ and for all $\chi\in \nl(G)$, if and only if 
$\cd(G)=\{1, |G/Z(G)|^{1/2} \}$. Moreover, $|\nl(G)|=|Z(G)|-\frac{|Z(G)|}{|G{}'|}$ and $\nl(G)=\{|G/Z(G)|^{1/2}\lambda ~|~ \lambda \in \Irr(Z(G)) \textnormal{ such that } G{}' \nleq \ker(\lambda) \}$, where members of $\nl(G)$ are defined as follows:
$$|G/Z(G)|^{1/2}\lambda (g):=\begin{cases}
			|G/Z(G)|^{1/2}\lambda(g) &\quad \text{ if } g\in Z(G),\\
			0 &\quad \text{ otherwise} 
			\end{cases} $$
(see \cite[Subsection 3.1]{SR}). Obviously, $\ker(\chi) \lneq Z(G)$ for all $\chi\in \nl(G)$. 

\begin{lemma}\label{lem:ramified} Suppose $G$ is a VZ group. Then every $\chi\in \nl(G)$ is fully ramified with respect to $G/Z(G)$. 
\end{lemma}
\begin{proof} Follows from Problem $6.3$ of \cite{I}.
\end{proof}

\begin{lemma}\label{lem:galoisconjugate} Suppose $G$ is a VZ group. Let $\chi, \psi\in \nl(G)$. Then $\chi$ and $\psi$ are Galois conjugate if and only if $\ker(\chi)=\ker(\psi)$.
\end{lemma}
\begin{proof} It follows from the fact that two linear characters of an abelian group are Galois conjugate if and only if their kernels are same. 
\end{proof}

Now we quote the group-theoretic information of $G/Z(G)$ and $G{}'$ for a VZ $p$-group. 
\begin{lemma}\textnormal{\cite[Lemma 2.4]{L2}}\label{lem:VZgroup} Let $G$ be a VZ $p$-group. Then $G/Z(G)$ and $G{}'$ are elementary abelian
$p$-groups. 
\end{lemma}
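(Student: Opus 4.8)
The plan is to prove the two claims in sequence, first pinning down the nilpotency class and then the exponents. I would begin by showing that $G$ has nilpotency class at most $2$, i.e. $G' \le Z(G)$; this already makes both $G/Z(G)$ and $G'$ abelian, so that only ``exponent $p$'' remains. To get $G' \le Z(G)$, I would evaluate the regular character $\rho = \sum_{\chi \in \Irr(G)} \chi(1)\chi$ at an element $g \in G \setminus Z(G)$ with $g \neq e$. Since every $\chi \in \nl(G)$ vanishes on $G \setminus Z(G)$, only the linear characters survive, giving $0 = \rho(g) = \sum_{\lambda \in \lin(G)} \lambda(g)$. But $\sum_{\lambda \in \lin(G)} \lambda(g)$ is the regular character of $G/G'$ evaluated at $gG'$, hence equals $|G:G'|$ if $g \in G'$ and $0$ otherwise. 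Thus no nontrivial element of $G'$ lies outside $Z(G)$, i.e. $G' \le Z(G)$.

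Next I would establish a Camina-type property: $\{[g,x] : x \in G\} = G'$ for every $g \in G \setminus Z(G)$. Fix such a $g$ and any $c \in G'$; since $G' \le Z(G)$ we have $gc \in gZ(G)$, so $gc \notin Z(G)$ as well. I would apply column orthogonality to the columns of the character table at $g$ and $gc$. The nonlinear characters contribute nothing (both $g, gc \notin Z(G)$), and for linear $\lambda$ one has $\lambda(g)\overline{\lambda(gc)} = \overline{\lambda(c)}$ because $|\lambda(g)| = 1$; summing gives $\sum_{\chi} \chi(g)\overline{\chi(gc)} = \overline{\sum_{\lambda \in \lin(G)}\lambda(c)} = |G:G'| \neq 0$. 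Hence $g$ and $gc$ are conjugate, so the conjugacy class of $g$ contains $gG'$; as it is always contained in $gG'$ in a class-$2$ group, I conclude $\{[g,x]:x\in G\} = G'$.

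Finally I would combine this with the class-$2$ commutator identity $[x^p, y] = [x,y]^p$ to force the exponents down to $p$. Suppose some commutator $[x,y]$ had order $\ge p^2$; then $[x^p, y] = [x,y]^p \neq e$, so $x^p \notin Z(G)$, and the Camina property gives $\{[x^p, z]: z \in G\} = G'$. On the other hand $[x^p, z] = [x,z]^p \in (G')^p := \{w^p : w \in G'\}$ for every $z$, so $G' \subseteq (G')^p$; but $(G')^p$ is a proper subgroup of the nontrivial abelian $p$-group $G'$, a contradiction. Therefore every commutator has order $p$, and since $G' \le Z(G)$ is abelian and generated by commutators, $G'$ is elementary abelian. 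The same identity then yields $x^p \in Z(G)$ for all $x$ (as $[x^p, y] = [x,y]^p = e$), so $G/Z(G)$ is abelian of exponent $p$, i.e. elementary abelian. The main obstacle is the Camina-type step: the orthogonality computations themselves are routine, but one must recognize that this exact equality $\{[g,x]:x\in G\}=G'$ is the tool available, after which its collision with $[x^p,z]=[x,z]^p$ forces $(G')^p = G'$ and delivers the contradiction essentially for free (uniformly in $p$, including $p=2$).
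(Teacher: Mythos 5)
Your proof is correct, and it is worth noting that the paper itself offers no internal proof to compare against: Lemma \ref{lem:VZgroup} is simply imported from Lewis \cite[Lemma 2.4]{L2}. Your argument is a legitimate self-contained substitute, and each step checks out: the regular-character evaluation correctly forces $G' \le Z(G)$ (nonlinear characters vanish off $Z(G)$, and the linear sum is the regular character of $G/G'$ at $gG'$); the column-orthogonality computation at $g$ and $gc$ is valid because $G' \le Z(G)$ guarantees $gc \notin Z(G)$ whenever $g \notin Z(G)$, yielding the Camina-type equality $\{[g,x] : x \in G\} = G'$ for every noncentral $g$; and the class-$2$ identity $[x^p,y] = [x,y]^p$ (which holds for all $p$, including $p=2$, since commutators are central) collides with that equality to give $G' \subseteq (G')^p$, impossible in a nontrivial finite abelian $p$-group. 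The final deductions — $G'$ elementary abelian because it is generated by commutators of order dividing $p$, and $x^p \in Z(G)$ for all $x$ so that $G/Z(G)$ is elementary abelian — are sound. Your route in fact parallels the standard one in spirit: in the literature the lemma is deduced from the observation that $(G, Z(G))$ is a Camina pair in a VZ-group together with exponent results of Macdonald type, and your orthogonality step is essentially a from-scratch proof of that Camina-pair ingredient (compare Lemma \ref{L8}, which the paper quotes from Mattarei \cite{SM}). What your version buys is self-containedness and uniformity in $p$; what the citation buys the paper is brevity. The only cosmetic point is that you implicitly use that a VZ-group is nonabelian (so that $G \setminus Z(G) \neq \emptyset$ and $G' \neq 1$), which is built into the definition since $\nl(G)$ must be nonempty; in the degenerate abelian case the statement is trivially true anyway.
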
	

\begin{lemma}\label{lem:nonlinearkernel2} Suppose $G$ is a VZ $p$-group and $d(G{}')\geq 2$. Then for all $\chi\in \nl(G)$, $\ker(\chi)$ intersects with $G{}'$ non-trivially. 
\end{lemma} 
\begin{proof} By the above observation, if $\chi \in \nl(G)$, then $\chi=|G/Z(G)|^{1/2}\lambda$ for some $\lambda \in \Irr(Z(G))$ such that $G{}' \nleq \ker(\lambda)$ and $\ker(\chi)=\ker(\lambda) \lneq Z(G)$. On the contrary, suppose $\ker(\chi) \cap G{}' ={1}$. Consider the natural group homomorphism $\eta : Z(G) \longrightarrow Z(G)/\ker(\chi)$ such that $g \mapsto g \ker(\chi)$. Take the restriction  $\eta\downarrow _{G{}'} : G{}' \longrightarrow Z(G)/\ker(\chi)$. Under the assumption $\ker(\chi) \cap G{}' ={1}$,  $\eta\downarrow _{G{}'} $ is injective group homomorphism and thus $G{}'$	 is isomorphic to a subgroup of the cyclic group $Z(G)/\ker(\chi)$. This is a contradiction because $d(G{}')\geq 2$.  
\end{proof}	
%
		
%
\begin{lemma}\label{lem:nonlinearkernel} Suppose $G$ is a VZ $p$-group. Let $G{}'=\langle g_1, g_2, \ldots, g_k \rangle$. Then for any  $\chi\in \nl(G)$, there exists $g_i$ such that $\langle g_i\ker(\chi)\rangle$ is a subgroup of cyclic group $Z(G)/\ker(\chi)$. 
\end{lemma}
\begin{proof}
Suppose $G{}'=\langle g_1, g_2, \ldots, g_k \rangle \cong C_p\times C_p\times \cdots \times C_p$ (k copy of $C_p$). Let $\chi\in \nl(G)$. Then there exists $g\in G{}'$ such that $g=g_1^{m_1}g_2^{m_2}\cdots g_k^{m_k}$ for some $m_i$ ($1\leq m_i\leq p$ and $1\leq i\leq k$) and $$\chi(g)=\chi(1)\lambda(g_1^{m_1}g_2^{m_2}\cdots g_k^{m_k})=\chi(1)\lambda(g_1^{m_1})\lambda(g_2^{m_2})\cdots \lambda(g_k^{m_k})\neq \chi(1)
,$$ where $\lambda\in \Irr(Z(G))$ such that $G{}' \nleq \ker(\lambda)$. This implies that there exists $g_i^{m_i}$ such that $\lambda(g_i^{m_i})\neq 1$, i.e. $\chi(g_i^{m_i})\neq \chi(1)$ for some $i$. Since order of $g_i$ is $p$, we get $\chi(g_i)\neq \chi(1)$. Since $Z(G)/\ker(\chi)$ is cyclic group, $\langle g_i\ker(\chi)\rangle \leq Z(G)/\ker(\chi)$.
\end{proof}

	\begin{lemma} \label{L6}
		Let $G$ be a VZ $p$-group with $d(Z(G)) = r$ and $d(G') = k.$ Let $X$ be a set of $r$ irreducible complex characters of $G$ such that 
		\begin{equation*} \label{eq:12}
		\bigcap_{\chi \in X} \ker (\chi) = 1.
		\end{equation*}
		Then the number of linear characters in $X$ must be $r-k$. In fact, the set $X_G$, which represents a minimal faithful quasi-permutation representation of $G$, must contain $k$ many non-linear irreducible characters and $r-k$ many linear characters of $G$.
	\end{lemma}
\begin{proof} We will prove this lemma in two steps. In the first step, we will show that there are at least  $k$ many non-linear irreducible characters in $X$. In the second step, we will show that the number of non-linear irreducible characters in the set $X$ is equal to $k$. \\
{\bf Step 1:}  We will prove this part in cases.\\
{\bf Case 1 ($d(G{}')=1$, i.e. $G{}' \cong C_{p}$).}  Since all the irreducible characters in the set $X$ can not be linear, we can take at most $r-k=r-1$ linear characters in $X$.\\
{\bf Case 2 ($d(G{}')>1$).} Let $d(Z(G)) = r$ and let $d(G{}') = k$. On the contrary, suppose in $X$, we have only $k-1$ many non-linear irreducible characters of $G$, say $\chi_1, \chi_2, \ldots, \chi_{k-1}$. Since $d(G{}')=k$ and $G{}'$ is an elementary abelian $p$-group, $G{}'$ is isomorphic to direct product of $k$ many copies of cyclic group of order $p$. For each $i$, since $Z(G)/\ker(\chi_i)$ is cyclic, $\ker(\chi_i)$ contains all cyclic components of $G{}'$ except one. Hence $G{}' \cap (\cap_{i=1}^{k-1}\ker(\chi_i))\neq 1$. Further, $G{}'$ is contained in each $\ker(\eta)$, where $\eta \in \lin(G)$. This shows that $\bigcap_{\chi \in X} \ker (\chi) \neq 1,$ which is a contradiction. This proves that in the set $X$, we require at least $k$ many non-linear irreducible characters.\\
{\bf Step 2:} 
Now we will show that the number of non-linear irreducible characters in the set $X$ must be $k$. In the view of Set Up \ref{setup:thm1}, we have $a_{1}, a_{2}, \ldots, a_{r}\in G$ such that
\begin{equation*} 
Z(G) = \langle a_{1},a_{2},\ldots, a_{r} \rangle=H_{1}\times \cdots \times H_{r} \cong C_{p^{l_{1}}} \times C_{p^{l_{2}}} \times \cdots \times C_{p^{l_{r}}}, ~ G{}' = \langle a_{1}^{p^{l_{1}-1}}, \ldots, a_{k}^{p^{l_{k}-1}} \rangle,
\end{equation*}
where $H_{i}=\langle a_{i} \rangle$ 
and 
\begin{equation*} 
\frac{Z(G)}{G{}'} = \langle a_{1}G{}',\ldots,  a_{k}G{}', a_{k+1}G{}',\ldots,a_{r}G{}' \rangle \cong C_{p^{l_{1}-1}} \times \cdots \times C_{p^{l_{k}-1}} \times C_{p^{l_{k+1}}} \cdots \times C_{p^{l_{r}}}.
\end{equation*}
By Lemma \ref{lem:nonlinearkernel}, without loss of generality, we can choose $k$ many non-linear irreducible characters $\chi_1,\ldots,\chi_k$ such that  
$\langle a_i^{p^{l_i-1}}\ker(\chi_i)\rangle \leq Z(G)/\ker(\chi_i)$, for $1\leq i\leq k$. This shows that, for each $i$, we have $\langle a_i\ker(\chi_i)\rangle \leq Z(G)/\ker(\chi_i)$ and no power of $a_i$ belongs to $\ker(\chi_i)$. Since $|\langle a_i\ker(\chi_i)\rangle |=|\langle a_i\rangle |$, we have $Z(G)/\ker(\chi_i)\cong \langle a_i\rangle$ for $1\leq i\leq k$. Now suppose if we add one more non-linear irreducible character, say $\chi_{k+1}$, in the set $X$, then by Lemma \ref{lem:nonlinearkernel}, we get $\langle a_i^{p^{l_i-1}}\ker(\chi_{k+1})\rangle \leq Z(G)/\ker(\chi_{k+1})$, for some fixed $i$, $1\leq i\leq k$. This shows that $\langle a_i\rangle \cong Z(G)/\ker(\chi_{i}) \cong Z(G)/\ker(\chi_{k+1})$ and hence $\chi_i$ and $\chi_{k+1}$ are Galois conjugate. Thus it is meaningless to keep $\chi_{k+1}$ in the set $X$. Therefore, in the set $X$, the number of non-linear characters is equal to $k$ and the rest $r-k$ characters in $X$ are linear characters.\\
Now, from Lemma \ref{L1}, we get $|X_G| = r$, and from equation \eqref{eq:X_G}, $\cap_{\chi \in X_{G}} \ker (\chi) = 1$. Therefore, $X_G$ contains $k$ many non-linear irreducible characters and $r-k$ many linear characters of $G$.
\end{proof}

	\subsection{Proof of Theorem \ref{T1}}

\noindent Let $G$ be a VZ $p-$group, $d(Z(G)) = r$ and $d(G') = k.$ Here $G' \leq Z(G)$ and by Lemma \ref{lem:VZgroup}, $G{}'$ is elementary abelian $p$-group. In the view of Set up \ref{setup:thm1}, we have
\begin{equation} \label{eq:1}
Z(G) = \langle a_{1},a_{2},\ldots, a_{r} \rangle \cong C_{p^{l_{1}}} \times C_{p^{l_{2}}} \times \cdots \times C_{p^{l_{r}}}, ~ G' = \langle a_{1}^{p^{l_{1}-1}}, \ldots, a_{k}^{p^{l_{k}-1}} \rangle
\end{equation}
\begin{equation*} \label{eq:20}
\text{ and } \frac{Z(G)}{G'} = \langle a_{1}G',\ldots,  a_{k}G', a_{k+1}G',\ldots,a_{r}G' \rangle \cong C_{p^{l_{1}-1}} \times \cdots \times C_{p^{l_{k}-1}} \times C_{p^{l_{k+1}}} \cdots \times C_{p^{l_{r}}}.
\end{equation*}

\noindent \emph{\bf Step I: To get a set ${X}$ of $r$ irreducible characters which satisfies equation \eqref{eq:X_G}. }\\ 
For all $i$ ($1\leq i \leq r)$, let $K_{i}= \langle a_{i}G'\rangle  \leq Z(G)/G'$ and let $\widehat{K_{i}} = K_{1} \times \cdots \times K_{i-1}\times K_{i+1} \times \cdots \times K_{r}$. 
Choose $\psi_{k+1},\ldots,\psi_{r} \in \lin(G/G{}')$ such that $\ker(\psi_{i}\downarrow_{Z(G)/G{}'}) = \widehat{K_i}$ for all  $i$ ($k+1\leq i \leq r)$. Set $\eta_i= \psi_{i}\downarrow_{Z(G)/G{}'}$ for $k+1\leq i \leq r$. One can view $\eta_i$ as a faithful linear character of cyclic group $\frac{Z(G)/G'}{\widehat{K_{i}}}.$
Observe that $\ker (\eta_{i}) = \ker \left( \psi_{i}\downarrow_{{Z(G)}/{G'}} \right) =\big( \ker \psi_{i} \cap Z(G)\big)/G{}'$. 
 On the other hand, let $H_i = \langle a_i \rangle$ for all $i$ ($1\leq i \leq r$) and let $\widehat{H_i} = H_1 \times \cdots \times H_{i-1} \times H_{i+1} \times \cdots \times H_{r}$. Now, for each $i$ ($1\leq i \leq k $),  let $\chi_i \in \nl(G)$ such that  $\chi_i\downarrow_{Z(G)} = \chi_i(1) \lambda_i,$ where $\lambda_{i} \in \lin(Z(G))$ and $\ker (\chi_i)=\ker (\lambda_i)=\widehat{H_i}$. One can view $\lambda_i$ as a faithful linear character of cyclic group $Z(G)/\widehat{H_i}$. Now,
$$ \bigcap_{i=k+1}^{r} \ker (\psi_{i}\downarrow_{Z(G)/G{}'})= \bigcap_{i=k+1}^{r} \widehat{K_i}= K_1\times \cdots \times K_k=\langle a_1G',\ldots,  a_kG' \rangle = H_1\times \cdots \times H_k,$$ 
and 
$$ \bigcap_{j=1}^{k} \ker (\chi_{j})= \bigcap_{j=1}^{k} \widehat{H_j}= H_{k+1}\times \cdots \times H_r \leq Z(G).$$ 
This shows that  
$\left( \bigcap_{i=1}^{k} \ker (\chi_{i}) \right) \bigcap \left( \bigcap_{i=k+1}^{r} \ker (\psi_{i}) \right)    = 1.$
Therefore,
\begin{equation} \label{eq:3}
X = \{ \chi_{i}~ |~ 1\leq i\leq k \} \cup \{ \psi_{i}~ |~ k+1\leq i\leq r  \} 
\end{equation}
is a set of $r$ irreducible characters which satisfies equation \eqref{eq:X_G}. Now we will prove that set $X$ is a minimal faithful quasi-permutation representation of $G$.\\
\noindent	For the non-linear irreducible characters $\chi_{i},~ 1\leq i \leq k,$ it is easy to observe the following.
\begin{enumerate}
	\item For each $i$, $\Gamma (\chi_i)=\Gamma(\lambda_i)$ and $|\Gamma(\lambda_i)|= \phi(p^{l_{i}}) = p^{l_{i}} - p^{l_{i}-1}$, where $\phi$ is Euler phi function.
	\item For each $i$, $m(\chi_i)=\chi_i(1)m(\lambda_i)$ and $\chi_i(1)=|G/Z(G)|^{1/2}$.
	\item For each $i$, $m(\lambda_i)=\big|\min \big\{\sum_{\sigma \in \Gamma(\lambda_i)}\lambda_i^{\sigma}(g) ~|~ g\in G \big\}\big|=|-p^{l_{i}-1}|= p^{l_{i}-1}$. 
\end{enumerate}

\noindent In the case of the linear characters $\psi_{i}$ ($k+1\leq i \leq r)$, we first note that $\frac{G/G{}'}{\ker (\psi_{i})}$ is cyclic. For each $i$, let
\begin{equation} \label{eq:11}
\frac{G/G'}{\ker (\psi_{i})} \cong C_{p^{m_{i}}}, ~k+1\leq i \leq r.
\end{equation}
Without loss of generality, we can assume that $p^{m_{k+1}}\leq p^{m_{k+2}}\leq \cdots \leq p^{m_{r}}$. Again, one can view $\psi_i$ as a faithful linear character of cyclic group $G/\ker (\psi_i)$, for $k+1\leq i \leq r$.
Now $|\Gamma(\psi_{i})|\psi_{i}(1) =  \phi(p^{m_i})\cdot 1 =  p^{m_i} - p^{m_i-1}$, for each $i$ ($k+1\leq i \leq r)$. Also, it is easy to observe that $m(\psi_{i}) = \big|\min \big\{\sum_{\sigma \in \Gamma(\psi_i)}\psi_i^{\sigma}(g) ~|~ g\in G \big\}\big| = |-p^{m_{i}-1}| = p^{m_{i}-1}.$
Now consider the character $$\xi=\sum_{i=1}^r \theta_i,$$ where $\theta_i=\sum_{\sigma\in \Gamma(\chi_i)}\chi_i^{\sigma}= \chi_i(1)\bigg(\sum_{\sigma \in \Gamma(\lambda_i)}\lambda_i^{\sigma}\bigg)=|G/Z(G)|^{1/2}\bigg(\sum_{\sigma \in \Gamma(\lambda_i)}\lambda_i^{\sigma}\bigg),$ for $1\leq i \leq k,$ and $\theta_i=\sum_{\sigma\in \Gamma(\psi_i)}\psi_i^{\sigma},$ for $k+1\leq i \leq r.$
Then by the above observations, we have $$m(\xi)= |G/Z(G)|^{1/2} \sum_{i=1}^{k} p^{l_{i}-1} + \sum_{k+1}^{r} p^{m_{i}-1}$$ and $$\xi(1)= |G/Z(G)|^{1/2} \sum_{i=1}^{k} p^{l_{i}-1}(p-1) + \sum_{k+1}^{r} p^{m_{i}-1}(p-1).$$ Therefore, 
\begin{equation*} \label{eq:9}
c(G) \leq \xi(1)+m(\xi)=|G/Z(G)|^{1/2} \sum_{i=1}^{k} p^{l_{i}} + \sum_{k+1}^{r} p^{m_{i}} = |G/Z(G)|^{1/2} \sum\limits_{i=1}^{k} |H_{i}| + \sum\limits_{i=k+1}^{r} p^{m_{i}}.
\end{equation*}	
Thus we have got a collection $X$, from equation \eqref{eq:3}, which satisfies equation \eqref{eq:X_G} and we get $c(G)\leq |G/Z(G)|^{1/2} \sum\limits_{i=1}^{k} |H_{i}| + \sum\limits_{i=k+1}^{r} p^{m_{i}}$.\\

\noindent {\bf Step II: $\mathbf{c(G) = \xi(1) + m(\xi)}$.}\\
\noindent In the set $X$ given by \eqref{eq:3}, we have exactly $k$ many non-linear irreducible characters and $r-k$ many linear characters and also no proper subset of $X$ satisfies \eqref{eq:X_G}. Thus the set $X$ satisfies Lemma \ref{L6}. In addition to this, set $X$ gives a set $Y= \{ \lambda_{i}~ |~ 1\leq i\leq k \} \cup \{ \eta_{i}~ |~ k+1\leq i\leq r  \}$, where $\lambda_i$ for 
$1\leq i \leq k$ and $\eta_i$ for  $k+1\leq i \leq r$ are defined above. It is easy to see that the set $Y$, which contains $r$ irreducible characters of $Z(G)$ gives $c(Z(G))=\widehat{\xi}(1) + m(\widehat{\xi})$, where 
$\widehat{\xi}=\sum_{i=1}^r \widehat{\theta_i},$ where $\widehat{\theta_i}=\sum_{\sigma\in \Gamma(\lambda_i)}\lambda_i^{\sigma}$ for $1\leq i \leq k,$ and $\widehat{\theta_i}=\sum_{\sigma\in \Gamma(\eta_i)}\eta_i^{\sigma},$ for $k+1\leq i \leq r.$ 
This shows that the set $X$ gives $c(G)$, i.e.,
$$c(G)= |G/Z(G)|^{1/2} \sum\limits_{i=1}^{k} |H_{i}| + \sum\limits_{i=k+1}^{r} p^{m_{i}}.$$
Also, from Theorem \ref{thm:pgroup}, $c(G)=\mu(G) = q(G)$ for $p\neq 2,$  and thus for VZ $p$-group, for odd prime $p$, $c(G)=\mu(G)=q(G)=|G/Z(G)|^{1/2} \sum\limits_{i=1}^{k} |H_{i}| + \sum\limits_{i=k+1}^{r} p^{m_{i}}$.
This proves the result.\\

\noindent {\bf Proof of Corollary \ref{C2}:}
	From Lemma \ref{L6}, we know that a minimal faithful quasi-permutation representation $X_G$ has exactly $r-k$ linear characters. As in our case, $r=k$, $X_G$ does not contain any linear characters.\\
	 Thus, we consider all the $r$ characters to be non-linear irreducible. Let $H_{i}, ~ 1\leq i \leq r,$ denote the $r$ cyclic components of $Z(G)$ such that $|H_{i}|=p^{l_{i}},$ and $\widehat{H_{i}} = H_{1} \times \cdots \times H_{i-1} \times H_{i+1} \times \cdots \times H_{r}.$  Let $\chi_{i} \in \Irr(G|Z(G)),$ for all $i,~1\leq i \leq r,$ be such that ${\chi_{i}}{}\downarrow_{Z(G)} = \chi_{i}(1)\lambda_{i},$ where $\lambda_{i}\in \lin(Z(G))$ is a faithful linear character of $Z(G)/\widehat{H_{i}}.$ 
	  Then 
	  $ |\Gamma( \chi_{i} )| \chi_{i}(1) = \phi(|H_{i}|) |G/Z(G)|^{1/2} = (p^{l_{i}}-p^{l_{i}-1})|G/Z(G)|^{1/2} $ and $m(\chi_i) = |G/Z(G)|^{1/2} \left| - p^{l_{i}-1} \right| = |G/Z(G)|^{1/2}p^{l_{i}-1},$ for each $i$.
	  Now consider the character $\xi=\sum_{i=1}^r \theta_i$, where $\theta_i=\sum_{\sigma\in \Gamma(\chi_i)}\chi_i^{\sigma}= \chi_i(1)\bigg(\sum_{\sigma \in \Gamma(\lambda_i)}\lambda_i^{\sigma}\bigg)=|G/Z(G)|^{1/2}\bigg(\sum_{\sigma \in \Gamma(\lambda_i)}\lambda_i^{\sigma}\bigg),$ for $1\leq i \leq r$.
	  Then by the above observations, we have $$m(\xi)= |G/Z(G)|^{1/2} \sum_{i=1}^{r} p^{l_{i}-1} $$ and $$\xi(1)= |G/Z(G)|^{1/2} \sum_{i=1}^{r} p^{l_{i}-1}(p-1).$$
	   Then 
	  $\xi(1)+m(\xi) = 
	   |G/Z(G)|^{1/2} \sum_{i=1}^{r} |H_{i}| = |G/Z(G)|^{1/2} c(Z(G))$.\\
	 Note that the set $X = \{ \chi_{i} \}_{i=1}^{r}$ gives a set $Y = \{ \lambda_i \}_{i=1}^{r}$, which is a minimal faithful quasi-permutation representation of $Z(G)$. 
	  This shows that
	 \begin{align*}
	 c(G) = \xi(1)+m(\xi) = |G/Z(G)|^{1/2} c(Z(G)). 
	 \end{align*}

\begin{remark}\label{remark:CorC2}
\noindent \begin{enumerate}
\item The converse of the Corollary \ref{C2} is not true in general, i.e., there exists VZ $p$-group for which $c(G) = |G/Z(G)|^{1/2} c(Z(G))$ but $d(Z(G)) > d(G')$. For example, consider the group  
	 	\[ G = \phi_{2}(22) = \langle \alpha, \alpha_{1}, \alpha_{2}~|~ [\alpha_{1}, \alpha] = \alpha^{p} = \alpha_{2}, \alpha_1^{p^2}=\alpha_{2}^{p} = 1 \rangle, \]
which is a VZ $p$-group of order $p^4$ ($p$ an odd prime)	 	(see \cite[Section 4.4]{RJ}). Here $Z(G) = \langle \alpha^p, \alpha_{1}^p \rangle \cong C_{p} \times C_p$, $G' = \langle \alpha^p \rangle \cong C_p$ and $|G/Z(G)|^{1/2} = p$. Thus, $d(Z(G)) > d(G')$. However, $c(G) = 2p^{2} = |G/Z(G)|^{1/2} c(Z(G))$ (we have provided the details of the computation of $c(\phi_{2}(22))$ in subsection \ref{ss:pgroups}). 
 \item 	There exists VZ $p$-group such that $d(Z(G))> d(G{}')$ but $c(G) < |G/Z(G)|^{1/2} c(Z(G))$. 
	For example, consider the group
	\[ G = \phi_{2}(321)e = \phi_{2}(21) \times C_{p^3} = \langle \alpha, \alpha_{1}, \alpha_{2}~|~ [\alpha_{1}, \alpha] = \alpha_{2}, \alpha^{p}=\alpha_{2}, \alpha_{1}^p = \alpha_{2}^p = 1 \rangle \times C_{p^3},  \]
	which is a VZ $p$-group of order $p^6$ ($p$ an odd prime) (see \cite[Section 4.6]{RJ}). 
Now, let $G_1 = \phi_{2}(21)$. Then $Z(G_1) = G{}'_1 = \langle \alpha^p  \rangle \cong C_p$. As $G= G_1 \times C_{p^3}$, $c(Z(G)) =c(Z(G_1))+c(C_{p^3})=p+ p^3$. Now, from Corollary \ref{C2}, $c(G_1) = |G_1/Z(G_1)|^{1/2} c(Z(G_1)) = p^2$. Then from Theorem \ref{thm:nilpotent} and \ref{thm:pgroup}, we get $c(G) = p^2 + p^3 < |G/Z(G)|^{1/2} c(Z(G))=p (p+p^3)=p^2+p^4$. 	
	 	\end{enumerate}
	 \end{remark}
	
	
	\noindent {\bf Proof of Corollary \ref{C3}.}
	Let $G$ be a VZ-group of odd order. As $G$ is a nilpotent group, it is isomorphic to the direct product of its Sylow subgroups. Since $G$ has exactly two character degrees, all the Sylow subgroups of $G$, except one, are abelian. Let $G \cong P_{1} \times P_{2} \times \cdots \times P_{n},$ where $P_{i}$ is a sylow $p_{i}-$subgroup of $G,$ for each $i,$ $ 1\leq i\leq n,$ where $P_{1}$ is the only non-abelian VZ $p_{1}$-group and the rest are abelian Sylow subgroups. Then 
	\begin{align*}
	\mu(G) &= \mu(P_{1})+ \mu(P_{2}) + \cdots + \mu(P_{n}) \quad (\text{by Theorem \ref{thm:nilpotent}}) \\
	&= c(P_{1})+ c(P_{2})+ \cdots c(P_{n}) \quad (\text{by Theorem \ref{thm:pgroup}})  \\
	& = c(G) \quad (\text{see \cite{MG}}).
	\end{align*}
	 Thus, we finally get the values of $\mu(G)$, $q(G)$ and $c(G)$ by using Theorem \ref{T1}. 
	
\section{Isoclinism and VZ $p$-group of order $p^6$}\label{sec:isoclexam}
In this section, we use the notation of the paper \cite{RJ}. We will use not only the results of the
previous sections but, more crucially, also use the classification of p-groups provided by James (\cite{RJ}).
The aim of this section is to calculate $c(G)$ for all the VZ $p$-groups of order $p^6$ ($p$ an odd prime).   
We begin with the definition. 	
\begin{definition} Two  finite groups $G$ and $H$ are said to be isoclinic if
there exist isomorphisms $\theta : G/Z(G)\longrightarrow H/Z(H)$ and $\phi: G{}'\longrightarrow H{}'$
such that the following diagram is
commutative:
  \begin{equation*}
    \begin{tikzcd}
      G/Z(G)\times G/Z(G) \arrow{d}{\theta \times \theta} \arrow{r}{a_G}
      & G{}' \arrow{d}{\phi} \\
      H/Z(H)\times H/Z(H) \arrow{r}{a_H}
      & H{}',
    \end{tikzcd}
  \end{equation*}
  where $a_G(g_1Z(G), g_2Z(G)) = [g_1,g_2]$, for $g_1,g_2\in G$, and $a_H(h_1Z(H), h_2Z(H)) = [h_1,h_2]$ for $h_1,h_2\in H$.
\end{definition}
\noindent The resulting pair $(\theta, \phi)$ is called an \emph{isoclinism} of $G$ onto $H$.
Notice that isoclinism is an equivalence relation on groups, weaker than isomorphism and it was first introduced by P. Hall \cite{Hall} for a classification of $p$-groups. It is well-known that two isoclinic nilpotent groups have the same nilpotency class. Let the number of the irreducible characters of $G$ of degree $k$ be denoted by $r_k(G)$. Then we have the following theorem.

\begin{theorem}\label{thm:isoclinicdegree}\textnormal{\cite[Theorem 2.2]{Bioch}} Let $G$ and $H$ be isoclinic groups. Then
\begin{enumerate}
\item $|H|r_k(G) = |G|r_k(H).$
\item The matrices of the irreducible complex representations
of $G$ and $H$ only differ by scalar factors.
\end{enumerate}
\end{theorem}	
	
\begin{corollary} \label{extra1}
Let $G$ and $H$ be two isoclinic VZ $p$-groups of same order such that $d(Z(G)) = d(G{}')$. Then $c(G) = c(H)$ if and only if $Z(G)$ is isomorphic to $Z(H)$.
\end{corollary}
\begin{proof} Result follows from Corollary \ref{C2}.
\end{proof} 
	
\noindent {\bf Note.} If we relax the condition $d(Z(G)) = d(G{}')$ in Corollary \ref{extra1}, then Corollary \ref{extra1} is no longer true. To produce counter examples, we have taken groups of order $p^6$, mentioned in the classification of $p$-groups, which is based on the concept of isoclinism (see \cite{RJ}). Groups of order $p^{6}$ (odd prime $p$) have been divided into 43 isoclinic families denoted by $\Phi_k$, $1\leq k \leq 43$ (\cite[Section 4.6]{RJ}). The groups are given by polycyclic presentation, in which all the relations of the form $[x,y] =x^{-1}y^{-1}xy = 1$ between the generators have been omitted from the list. Since in the calculation of $c(G)$, we heavily use the presentations of the groups of order $p^6$ (p odd) from
the paper of James (\cite{RJ}), the reader
is advised to keep this paper handy. VZ $p$-groups of order $p^6$ belong to isoclinic families $\Phi_{k}$, for $k=2, 5, 15$.

\begin{example}\label{example1}
{\bf Isoclinic VZ $p$-groups $G$ and $H$ such that $d(Z(G)) \neq d(G{}')$ and $Z(G)\cong Z(H)$ but $c(G) \neq c(H)$}.\\
\textnormal{
	\noindent  Consider two groups, $G$ and $H$, of order $p^{6}$ ($p$ an odd prime) given by
\[ G = \phi_{2}(3111)a = \phi_{2}(31) \times C_{p} \times C_{p} = \langle \alpha, \alpha_{1}, \alpha_{2}~|~ [\alpha_{1}, \alpha] = \alpha^{p^2} = \alpha_{2}, \alpha_{1}^p = \alpha_{2}^p = 1 \rangle \times C_{p} \times C_{p}  \]
and
\[ H = \phi_{2}(222)a = \phi_{2}(22) \times C_{p^2} = \langle \alpha, \alpha_{1}, \alpha_{2}~|~ [\alpha_{1}, \alpha] = \alpha^{p} = \alpha_{2}, \alpha_1^{p^2}=\alpha_{2}^{p} = 1 \rangle \times C_{p^2}, \]
which belong to the isoclinic family $\Phi_{2}$ (\cite{RJ}). From Theorem \ref{thm:nilpotent} and \ref{thm:pgroup}, $c(G) = c(\phi_{2}(31)) + c(C_p) + c(C_p)$, and $c(H) = c(\phi_{2}(22)) + c(C_{p^2})$, and thus we need to calculate $c(\phi_{2}(31))$ and $c(\phi_{2}(22))$.
Let $\phi_{2}(31) = G_1$. Then $Z(G_1) = \langle \alpha^p \rangle \cong C_{p^2}$ and thus from Theorem \ref{thm:cycliccenter}, $c(G_1) = p^3$, which finally gives $c(G)= p^3+2p$.
Now, let $\phi_{2}(22) = H_1$. Then $Z(H_1) = \langle \alpha^p, \alpha_{1}^p \rangle \cong C_{p} \times C_p$, ${H}_1' = \langle \alpha^p \rangle$ and $Z(H_1)/H_1' = \langle \alpha_1^p H_1' \rangle$. We have computed $c(H_1)$ in subsection \ref{ss:pgroups}, and we get $c(H_1) = 2p^2$, and finally $c(H) = 2p^2 +p^2 = 3p^2$.}
\end{example}

\begin{example}\label{example2}
{\bf Isoclinic VZ $p$-groups $G$ and $H$ such that $d(Z(G)) \neq d(G{}')$ and $Z(G)\ncong Z(H)$ with $c(G) \neq c(H)$}.\\
\textnormal{
	\noindent  Let us consider two groups  $G = \phi_{2}(3111)a = \phi_{2}(31) \times C_p \times C_p$, and $H = \phi_{2}(321)b = \phi_{2}(31) \times C_{p^2}$ of order $p^{6}$ ($p$ an odd prime) which belong to the isoclinic family $\Phi_{2}$ (\cite{RJ}). From Example \ref{example1}, we get $c(\phi_{2}(31))=p^3$. Here $Z(G) = \langle \alpha^p \rangle \times C_p \times C_p \cong C_{p^2} \times C_p \times C_p$, $Z(H) = \langle \alpha^p \rangle \times C_{p^2} \cong C_{p^2} \times C_{p^2}$ and $d(Z(G)) \neq d(G')$. Again by Theorem \ref{thm:nilpotent} and \ref{thm:pgroup}, we get $c(G) = p^3 + 2p$ and $c(H) = p^3 + p^2$.}
		
\end{example}

\begin{example}\label{example3}
	
{\bf Isoclinic VZ $p$-groups $G$ and $H$, such that $d(Z(G))\neq d(G{}')$ and $Z(G)\cong Z(H)$ with $c(G)= c(H)$.}\\
\textnormal{
\noindent Consider two groups of order $p^5$ ($p$ an odd prime) given by 
\[ G = \phi_{2}(32)a_1 = \langle \alpha, \alpha_{1}, \alpha_{2} ~|~ [\alpha_{1}, \alpha] = \alpha^{p^2} = \alpha_{2}, \alpha_{1}^{p^2}= \alpha_{2}^p = 1 \rangle  \]
and 
\[ H = \phi_{2}(32)a_2 = \langle \alpha, \alpha_{1}, \alpha_{2}~|~ [\alpha_{1}, \alpha] = \alpha_1^{p}=\alpha_{2}, \alpha^{p^3}=\alpha_{2}^{p} = 1 \rangle,  \]
which belong to the isoclinic family $\Phi_{2}$ (\cite{RJ}).
Here $Z(G) = Z(H) = \langle \alpha^p, \alpha_{1}^p \rangle \cong C_{p^2} \times C_p$, $G{}' = \langle \alpha^{p^2} \rangle \cong C_p$, $H{}' = \langle \alpha_1^{p} \rangle \cong C_p$, $Z(G)/G' = \langle \alpha^p G{}', \alpha_1^p G{}' \rangle \cong C_p \times C_p$ and $Z(H)/H' = \langle \alpha^p H{}' \rangle \cong C_{p^2}$.
Further, $d(Z(G)) = d(Z(H)) = 2$ and $d(G') = d(H') = 1$. We first compute $c(G)$ and then $c(H)$.
In the view of the proof of Theorem \ref{T1}, we consider a set $X = \{ \chi_{1}, \psi_{2} \}$, where $\chi_1 \in \nl(G)$ and $\psi_{2} \in \lin(G/G{}').$     
Here $\chi_{1}\downarrow_{Z(G)} = \chi_{1}(1) \lambda_{1},$ where $\lambda_{1} \in \lin(Z(G))$ is given by $\lambda_{1} = \lambda'_{\langle \alpha^p \rangle} \cdot 1_{\langle \alpha_{1}^p \rangle} $, where $\lambda'_{\langle \alpha^p \rangle}$ is a faithful linear character of $\langle \alpha^p  \rangle$. On the other hand, $\psi_{2} = 1_{\langle \alpha G' \rangle} \cdot \psi_{\langle \alpha_1 G' \rangle} $, where $\psi_{\langle \alpha_1 G' \rangle}$ is a faithful linear character of $\langle \alpha_1 G' \rangle$, so that $\psi_{2}\downarrow_{Z(G)/G{}'} = 1_{\langle \alpha^p G' \rangle} \cdot \psi'_{\langle \alpha_1^p G' \rangle} $, where $\psi{}'_{\langle \alpha_1^p G' \rangle}$ is a faithful linear character of $\langle \alpha_1^p G' \rangle$. 
Then $\ker \chi_{1} = \langle \alpha_{1}^p \rangle,$ and  $\ker \left( \psi_{2}\downarrow_{Z(G)/G{}'} \right) = \langle \alpha^p \rangle$. Consider 
$\xi=\sum_{i=1}^2 \theta_i,$ where $\theta_1=\sum_{\sigma\in \Gamma(\chi_1)}\chi_1^{\sigma}= p\bigg(\sum_{\sigma \in \Gamma(\lambda_1)}\lambda_1^{\sigma}\bigg),$ and $\theta_2=\sum_{\sigma\in \Gamma(\psi_2)}\psi_2^{\sigma}$.
Then, $m(\xi)= p^2 + p $ and $\xi(1)= p^2(p-1) + p(p-1) $. Therefore, 
$c(G) = \xi(1) + m(\xi) = p^3 + p^2$.
In the similar way, we get $c(H) = p^3 + p^2$.}
\end{example}

\begin{example}\label{example4}
{\bf Isoclinic VZ $p$-groups $G$ and $H$ such that $d(Z(G)) \neq d(G{}')$ and $Z(G)\ncong Z(H)$ but $c(G) = c(H)$}.\\
\textnormal{
	\noindent  Consider two groups, $G$ and $H$, of order $p^{6}$ ($p$ an odd prime) given by
		\[ G = \phi_{2}(321)e = \phi_{2}(21) \times C_{p^3} = \langle \alpha, \alpha_{1}, \alpha_{2}~|~ [\alpha_{1}, \alpha] = \alpha_{2}, \alpha^{p}=\alpha_{2}, \alpha_{1}^p = \alpha_{2}^p = 1 \rangle \times C_{p^3},  \]
	 and $H = \phi_{2}(321)b = \phi_{2}(31) \times C_{p^2}$, which belong to the isoclinic family $\Phi_{2}$ (\cite{RJ}). Here $Z(G) = \langle \alpha^p \rangle \times C_{p^3} \cong C_{p} \times C_{p^3}$, $Z(H) = \langle \alpha^p \rangle \times C_{p^2} \cong C_{p^2} \times C_{p^2}$ and $d(Z(G)) \neq d(G{}')$. From Example \ref{example2}, $c(H) = p^3 + p^2$.
	 On the other hand, from Remark \ref{remark:CorC2}, we get $c(G) = p^3 + p^2$ and hence $c(G) = c(H)$.}
\end{example}
	

\noindent In the next subsection, we give a complete list of $c(G)$ for a VZ $p$-groups of order $p^6$.
\subsection{VZ $p$-groups of order $p^6$ ($p$ an odd prime)} \label{ss:pgroups}	

\begin{corollary} \label{extra2}
Let $G$ be a group of order $p^6$ belonging to $\Phi_{15}$ (odd prime $p$). Then $c(G) = 2p^3.$
	\end{corollary}
\begin{proof}
Consider the group 
	\begin{align*}
	G = \phi_{15}(1^6) &=\langle \alpha_{1}, \alpha_{2}, \alpha_3, \alpha_4, \beta_1, \beta_2 ~|~ [\alpha_{1}, \alpha_2]= \beta_1, [\alpha_{1}, \alpha_{3}] = \beta_2,\\  &[\alpha_{3}, \alpha_4] = \beta_1, [\alpha_{2}, \alpha_4] = \beta_2^q, \alpha_{1}^p = \alpha_{2}^p = \alpha_{3}^p =\alpha_4^p= \beta_1^p = \beta_2^p = 1 \rangle,
	\end{align*}
	where $q$ denotes the smallest positive integer which is a primitive root$\Mod p$ (\cite{RJ}). The group $G$ is a VZ $p$-group.  Here $Z(G) = G{}' =\langle \beta_1, \beta_2  \rangle \cong C_{p} \times C_p $ and $ \cd(G) = \{ 1, p^2 \}$.  Now by Theorem \ref{thm:isoclinicdegree}, for all the groups $H\in \Phi_{15}$, $\cd(H)=\{1,p^2\}$ and hence all the groups belonging to family $\Phi_{15}$ are VZ groups. Observe that, for each $G \in \Phi_{15}$, $Z(G) = G' \cong C_p \times C_p$. Thus from Corollary \ref{C2} and Corollary \ref{extra1}, we have $c(G) = 2p^3$ for every $G\in \Phi_{15}$.	
\end{proof}	
	
	\noindent In Table \ref{t:1} and \ref{t:2}, we provide a complete list of $c(G)$, where $G$ is a $p$-group of order $p^6$ lying in $\Phi_{2}$ and $\Phi_5$. Calculation of $c(G)$ is based upon the construction of a set $X_G\subset \Irr(G)$ (see in the proof of Theorem \ref{T1}). Here we illustrate how to read Table \ref{t:1} and \ref{t:2}.\\\\
	Table \ref{t:1} contains the groups belonging to family $\Phi_{2}$ and Table \ref{t:2} contains the groups belonging to family $\Phi_5$. For $G \in \Phi_{2}$, $|G/Z(G)| = p^2$ and thus $\cd(G) = \{ 1, p \}$. For $G \in \Phi_5$, $|G/Z(G)| = p^4$ and hence $\cd(G) = \{ 1, p^2 \}$.
There are two types of groups: 
\begin{enumerate}
\item Type 1: groups which are not a direct product. \item Type 2: groups which are expressed as a direct product of two or more groups. 
\end{enumerate} 
The column with the heading ``Group $G$" contains the name of the group of order $p^6$. 
For groups of Type 2, the column with the heading ``Group $N$" contains the description of  a non-abelian component $N$ of $G$. For groups of Type 2, computation has been  given for $N$ and finally $c(G)$ is computed from Theorems \ref{thm:nilpotent} and \ref{thm:pgroup}. In the column with the heading ``Set $X$", we have mentioned the members of the set $X_G$ for the groups of Type 1, and the members of the set $X_N$ for the groups of Type 2. Note that set $X_G$ is in the form $\{ \chi_{1}, \ldots, \chi_k,\psi_{k+1}, \ldots, \psi_r \}$, where $\chi_{i} \in \nl(G)$ (for $1\leq i \leq k$), and $\psi_j \in \lin(G/G{}')$ (for $k+1 \leq j \leq r$) with  $d(Z(G))=r$ and $d(G{}')=k$. Here $\chi_{i}\downarrow_{Z(G)} = \chi_{i}(1) \lambda_i$, where for each $i$, $\lambda_i \in \lin (Z(G))$ such that $G{}' \nleq \ker(\lambda_i)=\ker(\chi_i)$. But in Table \ref{t:1} and \ref{t:2}, in place of $\chi_i$ for $(1\leq i \leq k)$, we have just mentioned $\lambda_i$. The notation $\lambda_{H}$ stands for a faithful character of a subgroup $H$, and $1_H$ stands for the trivial character of $H$. For better understanding on the working of table, we present an example. Let
	\[ G = \phi_{2}(222)a = \phi_{2}(22) \times C_{p^2} = \langle \alpha, \alpha_{1}, \alpha_2~|~ [\alpha_{1}, \alpha] = \alpha^{p}=\alpha_2, \alpha_1^{p^2}=\alpha_{2}^{p} = 1 \rangle \times C_{p^2} .\]
	We take $N=\phi_{2}(22)$, which is a $p$-group of order $p^4$ with nilpotency class 2. Then $Z(N) = \langle \alpha^p, \alpha_{1}^p \rangle \cong C_{p} \times C_p$, $N{}' = \langle \alpha^p \rangle$, $Z(N)/N{}' = \langle \alpha_1^p N{}' \rangle$, $N/N{}'=\langle \alpha N{}', \alpha_1 N{}'\rangle\cong C_p \times C_{p^2}$ and $(\alpha N{}')^p=(\alpha_1 N{}')^{p^2}=N{}'$.
	Here $d(Z(N)) = 2$ and $d(N{}') = 1$, and so we have to use two irreducible characters of $N$ to obtain a minimal faithful quasi-permutation representation (see Theorem \ref{T1}). Consider a set $X_N = \{ \chi_{1}, \psi_{2} \}\subseteq \Irr(N)$, where $\chi_1 \in \nl(N)$ and $\psi_{2} \in \lin(N/N{}').$     
	Here $\chi_{1}\downarrow_{Z(N)} = \chi_{1}(1) \lambda_{1},$ where $\lambda_{1} \in \lin(Z(N))$ is given by $\lambda_{1} = \lambda'_{\langle \alpha^p \rangle} \cdot 1_{\langle \alpha_{1}^p \rangle} $, where $\lambda{}'_{\langle \alpha^p  \rangle}$ is a faithful linear character of $\langle \alpha^p  \rangle$.  On the other hand, $\psi_{2} = 1_{\langle \alpha N{}'\rangle}\cdot \psi_{\langle \alpha_1 N' \rangle} $, where $\psi_{\langle \alpha_1 N' \rangle}$ is a faithful linear character of $\langle \alpha_1 N' \rangle$. Now, $\psi_{2}\downarrow_{Z(N)/N{}'} = \bigg( 1_{\langle \alpha N{}'\rangle}\cdot \psi_{\langle \alpha_1 N' \rangle}\bigg){}\downarrow_{Z(N)/N{}'}$ is a faithful irreducible character of $Z(N)/N{}'=\langle \alpha_1^p N' \rangle$. 
	Then $\ker (\chi_{1}) = \langle \alpha_{1}^p \rangle,$ $\ker (\psi_2)= \langle \alpha N{}' \rangle$ and  $\ker \left( \psi_{2}\downarrow_{Z(N)/N{}'} \right) = N{}'=\langle \alpha^p \rangle$. 
Let $\eta_{N{}'}: N \longrightarrow N/N{}'$ be a natural group homomorphism. Then $\widetilde{\psi_2}:=\psi_2 \circ \eta_{N{}'}$ is a linear character of $N$ with $\ker(\widetilde{\psi_2})=\langle \alpha \rangle$. So we can think $\widetilde{\psi_2}$ as a faithful irreducible character of $N/\ker(\widetilde{\psi_2})=\langle \alpha_1\rangle$. Observe that 
\begin{enumerate}
\item $|\Gamma(\widetilde{\psi_2})|= |\Gamma(\psi_2)|=\phi(p^2)=p(p-1)$.
\item $m\bigg(\sum_{\sigma\in \Gamma(\widetilde{\psi_2})}\widetilde{\psi_2}^{\sigma} \bigg)=m\bigg(\sum_{\sigma\in \Gamma(\psi_2)}\psi_2^{\sigma} \bigg)=p$.
\end{enumerate} 
		
Consider 
	$\xi=\sum_{i=1}^2 \theta_i,$ where $\theta_1=\sum_{\sigma\in \Gamma(\chi_1)}\chi_1^{\sigma}= p\bigg(\sum_{\sigma \in \Gamma(\lambda_1)}\lambda_1^{\sigma}\bigg),$ and $\theta_2=\sum_{\sigma\in \Gamma(\psi_2)}\psi_2^{\sigma}$.
	Then, $m(\xi)= p + p=2p $ and $\xi(1)= p(p-1) + p(p-1)=2p(p-1)$. Therefore, 
	$c(N) = \xi(1) + m(\xi) = 2 p^2$. From Theorems \ref{thm:nilpotent} and \ref{thm:pgroup}, $c(G) = c(\phi_{2}(22)) + c(C_{p^2})$,
	and thus, $c(G) = 2p^2 +p^2 = 3p^2$. This group has been mentioned at Sl. no. 10 in Table \ref{t:1}. 
	
	\begin{tiny}
		
		\begin{longtable}[c]{|l|l|l|l|l|}
			\caption{$c(G)$ of VZ $p$-groups of order $p^6$ belonging to $\Phi_{2}$ family\label{t:1}}\\
			
			\hline
			Sl. no. &	Group $G$ & Group $N$ & Set $X$  &  $c(G)$ \\
			& & & &\\
			\hline
			\endfirsthead
			
			\hline
			\multicolumn{5}{|c|}{Continuation of Table \ref{t:1}}\\
			\hline
			Sl. no. &	Group $G$ & Group $N$ & Set $X$ &  $c(G)$ \\ 
			\hline
			\endhead
			
			\hline
			\endfoot
			
			\hline
			\endlastfoot
			
			1 &	\vtop{\hbox{\strut $\phi_{2}(411)a$ } \hbox{\strut $= \phi_{2}(41) \times C_p $ }} & \vtop{\hbox{\strut $ \phi_{2}(41) $ }} & $\lambda_{1} = \lambda_{\langle \alpha^p \rangle} $ & \vtop{\hbox{\strut $c(G) = c(N) + c(C_p)$ } \hbox{\strut $= p^4 + p$ }}  \\
			\hline
			2 &	\vtop{\hbox{\strut $\phi_{2}(321)a_1$ } \hbox{\strut $= \phi_{2}(32)a_1 \times C_p $ }} & \vtop{\hbox{\strut $ \phi_{2}(32)a_1 $ }} & \vtop{\hbox{\strut $\lambda_{1} = \lambda_{\langle \alpha^p \rangle} \cdot 1_{\langle \alpha_{1}^p \rangle }$, } \hbox{\strut $\psi_2 =  1_{\langle \alpha N' \rangle } \cdot \psi_{\langle \alpha_{1}N' \rangle} $ }}  &  \vtop{\hbox{\strut $c(G) = c(N) + c(C_p)$ } \hbox{\strut $= p^3 + p^2 + p$}}\\
			\hline
			3 &	\vtop{\hbox{\strut $\phi_{2}(321)a_2$ } \hbox{\strut $= \phi_{2}(32)a_2 \times C_p $ }} & \vtop{\hbox{\strut $ \phi_{2}(32)a_2 $ }} & \vtop{\hbox{\strut $\lambda_{1} = 1_{\langle \alpha^p \rangle} \cdot \lambda_{\langle \alpha_{1}^p \rangle }$, } \hbox{\strut $\psi_2 =  \psi_{\langle \alpha N' \rangle } \cdot 1_{\langle \alpha_{1}N' \rangle} $ }}  &  \vtop{\hbox{\strut $c(G) = c(N) + c(C_p)$ } \hbox{\strut $= p^3 + p^2 + p$}}\\
			\hline
			4 &	\vtop{\hbox{\strut $\phi_{2}(321)b$ } \hbox{\strut $= \phi_{2}(31) \times C_{p^2} $ }} & \vtop{\hbox{\strut $ \phi_{2}(31)$ }} & $\lambda_{1} = \lambda_{\langle \alpha^p \rangle} $ & \vtop{\hbox{\strut $c(G) = c(N) + c(C_{p^2})$ } \hbox{\strut $= p^3 + p^2$ }}  \\
			\hline
			5 &	\vtop{\hbox{\strut $\phi_{2}(321)e$ } \hbox{\strut $= \phi_{2}(21) \times C_{p^3} $ }} & \vtop{\hbox{\strut $ \phi_{2}(21)$ }} & $\lambda_{1} = \lambda_{\langle \alpha^p \rangle} $ & \vtop{\hbox{\strut $c(G) = c(N) + c(C_{p^3})$ } \hbox{\strut $=  p^2 + p^3$ }}  \\
			\hline
			6 &	\vtop{\hbox{\strut $\phi_{2}(3111)a$ } \hbox{\strut $= \phi_{2}(31) \times C_{p} \times C_{p} $ }} & \vtop{\hbox{\strut $ \phi_{2}(31)$ }} & $\lambda_{1} = \lambda_{\langle \alpha^p \rangle} $ & \vtop{\hbox{\strut $c(G) = c(N) + 2c(C_{p})$ } \hbox{\strut $= p^3 + 2p$ }}  \\
			\hline
			7 &	\vtop{\hbox{\strut $\phi_{2}(3111)b$ } \hbox{\strut $= \phi_{2}(311)b \times C_{p} $ }} & \vtop{\hbox{\strut $ \phi_{2}(311)b$ }} & $\lambda_{1} = \lambda_{\langle \gamma \rangle}$ & \vtop{\hbox{\strut $c(G) = c(N) + c(C_{p})$ } \hbox{\strut $= p^4 + p$ }}  \\
			\hline
			8 &	\vtop{\hbox{\strut $\phi_{2}(3111)c$ } \hbox{\strut $= \phi_{2}(311)c \times C_p $ }} & \vtop{\hbox{\strut $ \phi_{2}(311)c $ }} & \vtop{\hbox{\strut $\lambda_{1} = 1_{\langle \alpha^p \rangle} \cdot \lambda_{\langle \alpha_{2} \rangle }$, } \hbox{\strut $\psi_2 =  \psi_{\langle \alpha N' \rangle } \cdot 1_{\langle \alpha_{1}N' \rangle} $ }}  &  \vtop{\hbox{\strut $c(G) = c(N) + c(C_p)$ } \hbox{\strut $= p^3 + p^2 + p$}}\\
			\hline
			9 &	\vtop{\hbox{\strut $\phi_{2}(3111)d$ } \hbox{\strut $= \phi_{2}(111) \times C_{p^3} $ }} & \vtop{\hbox{\strut $ \phi_{2}(111)$ }} & $\lambda_{1} = \lambda_{\langle \alpha_2 \rangle} $ & \vtop{\hbox{\strut $c(G) = c(N) + c(C_{p^3})$ } \hbox{\strut $=  p^2 + p^3$ }}  \\
			\hline
			10 &	\vtop{\hbox{\strut $\phi_{2}(222)a$ } \hbox{\strut $= \phi_{2}(22) \times C_{p^2} $ }} & \vtop{\hbox{\strut $ \phi_{2}(22) $ }} & \vtop{\hbox{\strut $\lambda_{1} = \lambda_{\langle \alpha^p \rangle} \cdot 1_{\langle \alpha_{1}^p \rangle }$, } \hbox{\strut $\psi_2 =  1_{\langle \alpha N' \rangle } \cdot \psi_{\langle \alpha_{1}N' \rangle} $ }}  &  \vtop{\hbox{\strut $c(G) = c(N) + c(C_{p^2})$ } \hbox{\strut $= 2p^2 + p^2 = 3p^2$}}\\
			\hline
			11 &	\vtop{\hbox{\strut $\phi_{2}(2211)a$ } \hbox{\strut $= \phi_{2}(22) \times C_{p} \times C_p $ }} & \vtop{\hbox{\strut $ \phi_{2}(22) $ }} & \vtop{\hbox{\strut $\lambda_{1} = \lambda_{\langle \alpha^p \rangle} \cdot 1_{\langle \alpha_{1}^p \rangle }$, } \hbox{\strut $\psi_2 =  1_{\langle \alpha N' \rangle } \cdot \psi_{\langle \alpha_{1}N' \rangle} $ }}  &  \vtop{\hbox{\strut $c(G) = c(N) + 2c(C_p)$ } \hbox{\strut $= 2p^2 + 2p $}}\\
			\hline
			12 & \vtop{\hbox{\strut $\phi_{2}(2211)b$ } \hbox{\strut $= \phi_{2}(21) \times C_{p^2} \times C_p $ }} & \vtop{\hbox{\strut $ \phi_{2}(21)$ }} & See Column 4 at Sl. no. 5 in this table & \vtop{\hbox{\strut $c(G) = c(N) + c(C_{p^2}) + c(C_{p})$ } \hbox{\strut $= p^2 + p^2 +p = 2p^2 + p$ }}  \\
			\hline
			13 &	\vtop{\hbox{\strut $\phi_{2}(2211)c$ } \hbox{\strut $= \phi_{2}(221)c \times C_{p} $ }} & \vtop{\hbox{\strut $ \phi_{2}(221)c $ }} & \vtop{\hbox{\strut $\lambda_{1} = 1_{\langle \alpha^p \rangle} \cdot \lambda_{\langle \gamma \rangle }$, } \hbox{\strut $\psi_2 =  \psi_{\langle \alpha N' \rangle } \cdot 1_{\langle \alpha_{1}N' \rangle} \cdot 1_{\langle \gamma N' \rangle} $ }}  &  \vtop{\hbox{\strut $c(G) = c(N) + c(C_p)$ } \hbox{\strut $= p^3 + p^2 + p $}}\\
			\hline
			14 &	\vtop{\hbox{\strut $\phi_{2}(2211)d$ } \hbox{\strut $= \phi_{2}(221)d \times C_{p} $ }} & \vtop{\hbox{\strut $ \phi_{2}(221)d $ }} & \vtop{\hbox{\strut $\lambda_{1} = 1_{\langle \alpha^p \rangle} \cdot 1_{\langle \alpha_{1}^p \rangle } \cdot \lambda_{\langle \alpha_{2} \rangle } $, } \hbox{\strut $\psi_2 =  \psi_{\langle \alpha N' \rangle } \cdot 1_{\langle \alpha_{1}N' \rangle} \cdot 1_{\langle \alpha_{2} N' \rangle}, $ } \hbox{\strut $\psi_3 =  1_{\langle \alpha N' \rangle } \cdot \psi'_{\langle \alpha_{1}N' \rangle} \cdot 1_{\langle \alpha_{2} N' \rangle} $ }}  &  \vtop{\hbox{\strut $c(G) = c(N) + c(C_p)$ } \hbox{\strut $= 3p^2 + p $}}\\
			\hline
			15 &	\vtop{\hbox{\strut $\phi_{2}(2211)e$ } \hbox{\strut $= \phi_{2}(211)b \times C_{p^2} $ }} & \vtop{\hbox{\strut $ \phi_{2}(211)b$ }} & $\lambda_{1} = \lambda_{\langle \gamma \rangle} $ & \vtop{\hbox{\strut $c(G) = c(N) + c(C_{p^2})$ } \hbox{\strut $= p^3 + p^2$ }}  \\
			\hline
			16 &	\vtop{\hbox{\strut $\phi_{2}(2211)f$ } \hbox{\strut $= \phi_{2}(211)c \times C_{p^2} $ }} & \vtop{\hbox{\strut $ \phi_{2}(211)c $ }} & \vtop{\hbox{\strut $\lambda_{1} = 1_{\langle \alpha^p \rangle} \cdot \lambda_{\langle \alpha_{2} \rangle }$, } \hbox{\strut $\psi_2 =  \psi_{\langle \alpha N' \rangle } \cdot 1_{\langle \alpha_{1}N' \rangle} \cdot 1_{\langle \alpha_2 N' \rangle} $ }}  &  \vtop{\hbox{\strut $c(G) = c(N) + c(C_{p^2})$ } \hbox{\strut $= 2p^2 + p^2 = 3p^2$}}\\
			\hline
			17 &	\vtop{\hbox{\strut $\phi_{2}(21^4)a$ } \hbox{\strut $= \phi_{2}(21) \times C_{p} \times C_p \times C_p $ }} & \vtop{\hbox{\strut $ \phi_{2}(21)$ }} & See Column 4 at Sl. no. 5 in this table & \vtop{\hbox{\strut $c(G) = c(N) + 3c(C_{p})$ } \hbox{\strut $=  p^2 + 3p$ }}  \\
			\hline
			18 &	\vtop{\hbox{\strut $\phi_{2}(21^4)b$ } \hbox{\strut $= \phi_{2}(21) \times C_{p^2} \times C_p $ }} & \vtop{\hbox{\strut $ \phi_{2}(21)$ }} & See Column 4 at Sl. no. 5 in this table & \vtop{\hbox{\strut $c(G) = c(N) + c(C_{p^2}) + c(C_{p})$ } \hbox{\strut $= p^2 + p^2+ p = 2p^2 + p$ }}  \\
			\hline
			19 &	\vtop{\hbox{\strut $\phi_{2}(21^4)c$ } \hbox{\strut $= \phi_{2}(211)c \times C_{p} \times C_p $ }} & \vtop{\hbox{\strut $ \phi_{2}(211)c$ }} & See Column 4 at Sl. no. 16 in this table & \vtop{\hbox{\strut $c(G) = c(N) + 2c(C_{p})$ } \hbox{\strut $= 2 p^2+ 2p = 2p^2 + 2p$ }}  \\
			\hline
			20 &	\vtop{\hbox{\strut $\phi_{2}(21^4)d$ } \hbox{\strut $= \phi_{2}(111) \times C_{p^2} \times C_p $ }} & \vtop{\hbox{\strut $ \phi_{2}(111)$ }} & See Column 4 at Sl. no. 9 in this table & \vtop{\hbox{\strut $c(G) = c(N) + c(C_{p^2}) + c(C_{p})$ } \hbox{\strut $= p^2 + p^2+ p = 2p^2 + p$ }}  \\
			\hline
			21 &	\vtop{\hbox{\strut $\phi_{2}(1^6)$ } \hbox{\strut $= \phi_{2}(111) \times C_{p} \times C_p \times C_p $ }} & \vtop{\hbox{\strut $ \phi_{2}(111)$ }} & See Column 4 at Sl. no. 9 in this table  & \vtop{\hbox{\strut $c(G) = c(N) + 3c(C_{p})$ } \hbox{\strut $=  p^2 + 3p$ }}  \\
			\hline
			22 &	\vtop{\hbox{\strut $\phi_{2}(51)$ }} & - & $\lambda_{1} = \lambda_{\langle \alpha^p \rangle} $ & \vtop{\hbox{\strut $c(G) =  p^5 $ } }  \\
			\hline
			23 &	\vtop{\hbox{\strut $\phi_{2}(42)a_1$ } } & - & \vtop{\hbox{\strut $\lambda_{1} = \lambda_{\langle \alpha^p \rangle} \cdot 1_{\langle \alpha_{1}^p \rangle }$, } \hbox{\strut $\psi_2 =  1_{\langle \alpha G' \rangle } \cdot \psi_{\langle \alpha_{1}G' \rangle} $ }}  &  \vtop{\hbox{\strut $c(G) = p^4+p^2$ } }\\
			\hline
			24 &	\vtop{\hbox{\strut $\phi_{2}(42)a_2$ } } & - & \vtop{\hbox{\strut $\lambda_{1} = 1_{\langle \alpha^p \rangle} \cdot \lambda_{\langle \alpha_{1}^p \rangle }$, } \hbox{\strut $\psi_2 =  \psi_{\langle \alpha G' \rangle } \cdot 1_{\langle \alpha_{1}G' \rangle} $ }}  &  \vtop{\hbox{\strut $c(G) = p^4+p^2$ } }\\
			\hline
			25 &	\vtop{\hbox{\strut $\phi_{2}(411)b$ }} & - & $\lambda_{1} = \lambda_{\langle \gamma \rangle}$ & \vtop{\hbox{\strut $c(G) =  p^5 $ } }  \\
			\hline
			26 &	\vtop{\hbox{\strut $\phi_{2}(411)c$ } } & - & \vtop{\hbox{\strut $\lambda_{1} = 1_{\langle \alpha^p \rangle} \cdot \lambda_{\langle \alpha_{2} \rangle }$, } \hbox{\strut $\psi_2 =  \psi_{\langle \alpha G' \rangle } \cdot 1_{\langle \alpha_{1}G' \rangle} $ }}  &  \vtop{\hbox{\strut $c(G) = p^4+p^2$ } }\\
			\hline
			27 &	\vtop{\hbox{\strut $\phi_{2}(33)$ } } & - & \vtop{\hbox{\strut $\lambda_{1} = \lambda_{\langle \alpha^p \rangle} \cdot 1_{\langle \alpha_{1}^p \rangle }$, } \hbox{\strut $\psi_2 =  1_{\langle \alpha G' \rangle } \cdot \psi_{\langle \alpha_{1}G' \rangle} $ }}  &  \vtop{\hbox{\strut $c(G) = 2p^3$ } }\\
			\hline
			28 &	\vtop{\hbox{\strut $\phi_{2}(321)c$ } } & - & \vtop{\hbox{\strut $\lambda_{1} = 1_{\langle \alpha^p \rangle} \cdot \lambda_{\langle \gamma \rangle }$, } \hbox{\strut $\psi_2 =  \psi_{\langle \alpha G' \rangle } \cdot 1_{\langle \alpha_{1}G' \rangle} \cdot 1_{\langle \gamma G' \rangle} $ }}  &  \vtop{\hbox{\strut $c(G) = 2p^3$ } }\\
			\hline
			29 &	\vtop{\hbox{\strut $\phi_{2}(321)d$ } } & - & \vtop{\hbox{\strut $\lambda_{1} = 1_{\langle \alpha^p \rangle} \cdot \lambda_{\langle \gamma \rangle }$, } \hbox{\strut $\psi_2 =  \psi_{\langle \alpha G' \rangle } \cdot 1_{\langle \alpha_{1}G' \rangle} \cdot 1_{\langle \gamma G' \rangle} $ }}  &  \vtop{\hbox{\strut $c(G) = p^4+p^2$ } }\\
			\hline
			30 &	\vtop{\hbox{\strut $\phi_{2}(321)f$ } } & - & \vtop{\hbox{\strut $\lambda_{1} = 1_{\langle \alpha^p \rangle} \cdot \lambda_{\langle \alpha_{1}^p \rangle } \cdot \lambda_{\langle \alpha_{2} \rangle }$, } \hbox{\strut $\psi_2 =  \psi_{\langle \alpha G' \rangle } \cdot 1_{\langle \alpha_{1}G' \rangle} \cdot 1_{\langle \alpha_{2} G' \rangle} $ } \hbox{\strut $\psi_3 =  1_{\langle \alpha G' \rangle } \cdot \psi'_{\langle \alpha_{1}G' \rangle} \cdot 1_{\langle \alpha_{2} G' \rangle} $ }}  &  \vtop{\hbox{\strut $c(G) = p^3+2p^2$ } }\\
			\hline
			31 &	\vtop{\hbox{\strut $\phi_{2}(222)b$ } } & - & \vtop{\hbox{\strut $\lambda_{1} = 1_{\langle \alpha^p \rangle} \cdot 1_{\langle \alpha_{1}^p \rangle } \cdot \lambda_{\langle \gamma \rangle }$, } \hbox{\strut $\psi_2 =  \psi_{\langle \alpha G' \rangle } \cdot 1_{\langle \alpha_{1}G' \rangle} \cdot 1_{\langle \gamma G' \rangle} $ } \hbox{\strut $\psi_3 =  1_{\langle \alpha G' \rangle } \cdot \psi'_{\langle \alpha_{1}G' \rangle} \cdot 1_{\langle \gamma G' \rangle} $ }}  &  \vtop{\hbox{\strut $c(G) = p^3+2p^2$ } }\\
			\hline
			
		\end{longtable}
	\end{tiny}

	\begin{tiny}
		
		\begin{longtable}[c]{|l|l|l|l|l|}
			\caption{$c(G)$ of VZ $p$-groups of order $p^6$ belonging to $\Phi_5$ family \label{t:2}}\\
			
			\hline
			Sl. no. &	Group $G$ & Group $N$ & Set $X$  &  $c(G)$ \\
			& & & &\\
			\hline
			\endfirsthead
			
			\hline
			\multicolumn{5}{|c|}{Continuation of Table \ref{t:2}}\\
			\hline
			Sl. no. &	Group $G$ & Group $N$ & $X$ &  $c(G)$ \\ 
			\hline
			\endhead
			
			\hline
			\endfoot
			
			\hline
			\endlastfoot
			
			1 &	\vtop{\hbox{\strut $\phi_{5}(21^4)a$ } \hbox{\strut $= \phi_{5}(2111) \times C_p $ }} & \vtop{\hbox{\strut $ \phi_{5}(2111) $ }} & $\lambda_{1} = \lambda_{\langle \alpha_1^p \rangle}$ & \vtop{\hbox{\strut $c(G) = c(N) + c(C_p)$ } \hbox{\strut $=  p^3 + p$ }}  \\
			\hline
			2 &	\vtop{\hbox{\strut $\phi_{5}(1^6)$ } \hbox{\strut $= \phi_{5}(1^5) \times C_p $ }} & \vtop{\hbox{\strut $ \phi_{5}(1^5) $ }} & $\lambda_{1} = \lambda_{\langle \beta \rangle} $ & \vtop{\hbox{\strut $c(G) = c(N) + c(C_p)$ } \hbox{\strut $= p^3 + p$ }}  \\
			\hline
			3 &	\vtop{\hbox{\strut $\phi_{5}(3111)$ }} & - & $\lambda_{1} = \lambda_{\langle \alpha_1^p \rangle} $ & \vtop{\hbox{\strut $c(G) =  p^4 $ } }  \\
			\hline
			4 &	\vtop{\hbox{\strut $\phi_{5}(2211)a$ } } & - & \vtop{\hbox{\strut $\lambda_{1} = 1_{\langle \alpha_1^p \rangle} \cdot \lambda_{\langle \alpha_{2}^p \rangle }$, } \hbox{\strut $\psi_2 =  \psi_{\langle \alpha_1 G' \rangle } \cdot 1_{\langle \alpha_{2}G' \rangle}  \cdot 1_{\langle \alpha_{3}G' \rangle} \cdot 1_{\langle \alpha_{4}G' \rangle} $ }}  &  \vtop{\hbox{\strut $c(G) = p^3+p^2$ } }\\
			\hline
			5 &	\vtop{\hbox{\strut $\phi_{5}(2211)b$ } } & - & \vtop{\hbox{\strut $\lambda_{1} = 1_{\langle \alpha_1^p \rangle} \cdot \lambda_{\langle \alpha_{3}^p \rangle }$, } \hbox{\strut $\psi_2 =  \psi_{\langle \alpha_1 G' \rangle } \cdot 1_{\langle \alpha_{2}G' \rangle}  \cdot 1_{\langle \alpha_{3}G' \rangle} \cdot 1_{\langle \alpha_{4}G' \rangle} $ }}  &  \vtop{\hbox{\strut $c(G) = p^3+p^2$ } }\\
			\hline
			6 &	\vtop{\hbox{\strut $\phi_{5}(21^4)b$ }} & - & $\lambda_{1} = \lambda_{\langle \gamma \rangle} $ & \vtop{\hbox{\strut $c(G)  = p^4 $ } }  \\
			\hline
			7 &	\vtop{\hbox{\strut $\phi_{5}(21^4)c$ } } & - & \vtop{\hbox{\strut $\lambda_{1} = 1_{\langle \alpha_1^p \rangle} \cdot \lambda_{\langle \beta \rangle }$, } \hbox{\strut $\psi_2 =  \psi_{\langle \alpha_1 G' \rangle } \cdot 1_{\langle \alpha_{2}G' \rangle}  \cdot 1_{\langle \alpha_{3}G' \rangle} \cdot 1_{\langle \alpha_{4}G' \rangle} $ }}  &  \vtop{\hbox{\strut $c(G) = p^3+p^2$ } }\\
			\hline
			
		\end{longtable}
	\end{tiny}

\begin{theorem}\label{thm:VZpgroupp6} Suppose $G$ is VZ $p$-group of order $p^6$ (odd prime $p$). Then 
$c(G) = q(G) = p(G)$ and we have one of the following possibilities:
\begin{enumerate}
\item If $|Z(G)| = p^4$, then $c(G) = p^4+p, p^3+p^2+p, p^3+p^2, p^3+2p, 3p^2, 2p^2+2p, 2p^2+p, 3p^2+p, p^2+3p, p^5, p^4+p^2, 2p^3$ or  $p^3+2p^2$.
\item  If $|Z(G)| = p^2$, then $c(G) = 2p^3, p^3+p, p^4$ or $p^3+p^2$.
\end{enumerate}
\end{theorem}
\begin{proof}
The result follows from Corollary \ref{extra2}, Table \ref{t:1} and \ref{t:2}.
\end{proof}

	\section{Camina $p$-group of class 3}
{\bf Notation.}	Throughout this section, $G_{2} = G'$ denotes the commutator subgroup of $G$. The nilpotency class of a nilpotent group $G$ is the number $n$ such that $G_{n} \neq 1$ and $G_{n+1}=1,$ where $G_{2} = [G,G] = G'$ and $G_{i+1}=[G_{i}, G]$, for $i>2.$ Let $N$ be a normal subgroup of $G.$ Then we denote the set $\Irr(G) \setminus \Irr(G/N)$ by $\Irr(G|N)$.

We start by recalling some basic results that we will need later.	The pair $(G,N)$ is said to be a Camina pair if for every $g\in G \setminus N,~ gN \subseteq Cl_{G}(g),$ the conjugacy class of $g.$ When $N=G'$, the group $G$ is called a Camina group.
	The following lemma gives us equivalent conditions for a pair $(G,N)$ to be a Camina pair.
	\begin{lemma} [\cite{SM}, Lemma 3] \label{L8}
		Let $N$ be a normal subgroup of $G$ and let $g\in G \setminus N.$ Then the following statements are equivalent:
		\begin{itemize}
			\item [\rmfamily(i)] $\chi(g)=0$ for all $\chi \in Irr(G|N).$
			\item [\rmfamily(ii)] $gN \subseteq Cl_{G}(g),$ the conjugacy class of $g.$
		\end{itemize}
	\end{lemma}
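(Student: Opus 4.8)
The plan is to reduce both (i) and (ii) to the single numerical condition $|C_G(g)| = |C_{G/N}(gN)|$, where $C_G(g)$ is the centralizer of $g$ in $G$ and $C_{G/N}(gN)$ is the centralizer of the image $gN$ in $G/N$. The computational engine is the coset-sum formula: for every $\chi\in\Irr(G)$,
\[
\sum_{n\in N}\chi(gn)=\begin{cases}|N|\,\chi(g)&\text{if }N\subseteq\ker\chi\ (\chi\in\Irr(G/N)),\\[2pt]0&\text{if }N\not\subseteq\ker\chi\ (\chi\in\Irr(G|N)).\end{cases}
\]
The first case is immediate, since such $\chi$ factors through $G/N$ and is therefore constant on the coset $gN$. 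For the second case I would fix a representation $X$ affording $\chi$ and write $\sum_{n\in N}X(gn)=X(g)\big(\sum_{n\in N}X(n)\big)$; the matrix $\tfrac1{|N|}\sum_{n\in N}X(n)$ is the projection onto the space of $N$-fixed vectors, and by Clifford's theorem this space is zero whenever the trivial character $1_N$ is not a constituent of $\chi\downarrow_N$, which is exactly the condition $N\not\subseteq\ker\chi$ (otherwise all $G$-conjugate constituents would be trivial, forcing $N\subseteq\ker\chi$). Taking traces gives the asserted vanishing.

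For the implication (ii)$\Rightarrow$(i), suppose $gN\subseteq Cl_G(g)$. Then every $gn$ with $n\in N$ is conjugate to $g$, so $\chi(gn)=\chi(g)$ for all $\chi\in\Irr(G)$. Summing over $n\in N$ for a fixed $\chi\in\Irr(G|N)$ gives $|N|\,\chi(g)=\sum_{n\in N}\chi(gn)=0$ by the coset-sum formula, whence $\chi(g)=0$. This direction is short and uses only the vanishing case of the formula.

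The reverse implication is the main obstacle, because (i) asserts $\chi(g)=0$ only at the single point $g$, whereas conjugacy $gn\sim g$ must be controlled across the whole coset $gN$. I would circumvent this with two second-moment identities obtained from column orthogonality. First, subtracting the orthogonality relation in $G/N$ from that in $G$ yields
\[
\sum_{\chi\in\Irr(G|N)}|\chi(g)|^2=|C_G(g)|-|C_{G/N}(gN)|,
\]
so (i) holds if and only if $|C_G(g)|=|C_{G/N}(gN)|$. Second, evaluating the double sum $\sum_{n\in N}\sum_{\chi\in\Irr(G)}\chi(gn)\overline{\chi(g)}$ in two ways---by column orthogonality it equals $|C_G(g)|\cdot|\{n\in N:gn\sim g\}|$, and by the coset-sum formula it equals $|N|\,|C_{G/N}(gN)|$---gives
\[
|\{n\in N:gn\sim g\}|=\frac{|N|\,|C_{G/N}(gN)|}{|C_G(g)|}.
\]
Since this count is at most $|N|$, condition (ii), namely that all $|N|$ elements of $gN$ are conjugate to $g$, holds if and only if $|C_G(g)|=|C_{G/N}(gN)|$. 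Both (i) and (ii) thus reduce to the same equality of centralizer orders, completing the equivalence; the only delicate point is the Clifford-theoretic vanishing underlying the coset-sum formula.
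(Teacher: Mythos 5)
The paper offers no proof of this lemma at all: it is imported verbatim from Mattarei \cite{SM}, so the only meaningful comparison is with the standard argument in the literature, which your proof essentially reconstructs. Your argument is correct and complete. The coset-sum formula is properly justified: $\frac{1}{|N|}\sum_{n\in N}X(n)$ is the idempotent projecting onto the $N$-fixed subspace, its trace is the multiplicity of $1_N$ in $\chi\downarrow_N$, and Clifford's theorem shows this multiplicity is nonzero exactly when $N\subseteq\ker\chi$, since the constituents of $\chi\downarrow_N$ form a single $G$-orbit and $1_N$ is $G$-invariant. The two second-moment identities are likewise sound: inflation identifies $\Irr(G/N)$ with the characters of $G$ whose kernel contains $N$, so subtracting the column-orthogonality relations of $G$ and $G/N$ at $g$ gives $\sum_{\chi\in\Irr(G|N)}|\chi(g)|^2=|C_G(g)|-|C_{G/N}(gN)|$, which in passing establishes the inequality $|C_{G/N}(gN)|\leq |C_G(g)|$ that your final ``at most $|N|$'' step silently relies on; and evaluating $\sum_{n\in N}\sum_{\chi\in\Irr(G)}\chi(gn)\overline{\chi(g)}$ both ways yields the exact count $|\{n\in N: gn\sim g\}|=|N|\,|C_{G/N}(gN)|/|C_G(g)|$. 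Reducing both (i) and (ii) to the single equality $|C_G(g)|=|C_{G/N}(gN)|$ is precisely how this equivalence is proved in the Camina-pair literature, so your route is not a novelty relative to the cited source, but it is a self-contained and correct proof of a statement the paper merely quotes.
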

It is easy to see that if $(G, N)$ is a Camina pair, then $Z(G) \leq  N \leq G{}'$. By Lemma \ref{L8}, it is clear that if $G$ is a Camina group, then for all $g\in G \setminus G{}'$, $\chi(g) = 0$  for all $\chi\in \nl(G)$. Next theorem gives us bound on nilpotency class of Camina $p$-group. 

\begin{theorem}\textnormal{\cite{DS}}\label{thm:caminanilpotency} If $G$ is a finite Camina $p$-group, then the nilpotency class of $G$ is at most $3$, i.e., $G_4 = \{1\}$.
\end{theorem}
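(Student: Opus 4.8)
The plan is to exploit the \emph{full} strength of the Camina identity and to reduce the class bound to a rigidity statement about the associated graded Lie ring. First I would record the basic consequences of $G$ being a Camina $p$-group. By Lemma~\ref{L8}, the Camina condition is equivalent to $[x,G]=G'$ for every $x\in G\setminus G'$; in particular $Z(G)\le G'$, the quotient $G/G'$ is elementary abelian (a standard consequence), and $|C_G(x)|=|G:G'|$ is constant on $G\setminus G'$, so the conjugacy class of each such $x$ is the full coset $xG'$. The sharpest form I will use is that $g\mapsto [x,g]$ induces a \emph{bijection} $G/C_G(x)\to G'$: every element of $G'$, however deep in the lower central series, is a single commutator $[x,g]$. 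I would also note that for $N\trianglelefteq G$ with $N\le G'$ the quotient $G/N$ is again Camina, since $[\bar x,\bar G]=[x,G]N/N=G'/N=(G/N)'$; factoring out $G_5$ I may therefore assume $G_5=1$ and that the class is exactly $c$, so that the goal becomes $c\le 3$.

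Next I would pass to the associated graded Lie ring $L=\bigoplus_{i\ge 1}L_i$ with $L_i=G_i/G_{i+1}$, a graded Lie ring over $\mathbb{Z}$ generated in degree one, with $L_1=G/G_2$ elementary abelian. Commuting with a fixed $x\notin G'$ induces additive maps $\mathrm{ad}_i(\bar x)\colon L_i\to L_{i+1}$; these are well defined and additive because $[x,gc]\equiv[x,g]$ and $[x,g_1g_2]\equiv[x,g_1][x,g_2]$ modulo $G_{i+2}$. The graded shadow of the Camina condition is that $\mathrm{ad}_1(v)\colon L_1\to L_2$ is \emph{surjective} for every nonzero $v\in L_1$ (since $[x,G]G_3=G'$ forces the image to be all of $G_2/G_3$); as $\mathrm{ad}_1(v)(v)=0$, this already gives $|L_2|<|L_1|$. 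The graded condition by itself does not bound $c$, however, so the real work is to reinstate the ungraded strength.

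The heart of the argument is a counting comparison built on the bijection $\phi_x\colon G/C_G(x)\to G'$. Exactly $|G_i|$ cosets of $C_G(x)$ map into each term $G_i$, while the weight filtration constrains \emph{which} cosets can land deep: an element $g$ satisfies $[x,g]\in G_{i+1}$ only if $\bar g$ is annihilated by the successive maps $\mathrm{ad}_1(\bar x),\mathrm{ad}_2(\bar x),\dots$ down to level $i$. Matching the two counts yields rigid numerical relations among $|L_1|,|L_2|,\dots$, together with surjectivity constraints on the higher $\mathrm{ad}_i(v)$. I would then show these relations, combined with the descending chain $|L_1|>|L_2|\ge|L_3|\ge\cdots$, are incompatible with $L_4\neq 0$: a nonzero $G_4$ forces, for a suitable $x\in G\setminus G'$, that the commutator set $[x,G]$ cannot exhaust $G'$ (it misses part of $G_4$), contradicting $[x,G]=G'$.

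\textbf{Main obstacle.} I expect the decisive difficulty to be precisely the interaction between $L_1$ and $L_3$, that is, ruling out $G_4\neq 1$. The class-$2$ case is transparent (there $G'=Z(G)$ and $G$ is a $VZ$-group), and reducing to the class-$\ge 3$ situation is routine; but eliminating class $\ge 4$ requires finer structural and arithmetic control of the alternating commutator maps than the graded surjectivity alone provides. This is the genuinely hard step, carried out by Dark--Scoppola~\cite{DS}, and it is exactly the part the present paper legitimately imports as the cited Theorem~\ref{thm:caminanilpotency}.
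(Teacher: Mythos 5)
You have, in effect, not proposed a proof: everything up to the decisive step is correct but standard, and the decisive step itself is asserted rather than argued. To be fair about the benchmark, the paper contains no proof of Theorem \ref{thm:caminanilpotency} either --- it is imported verbatim from Dark and Scoppola \cite{DS} --- and you correctly recognize this at the end. Your preliminaries are fine: the reformulation via Lemma \ref{L8} as $[x,G]=G'$ for every $x\in G\setminus G'$, the induced bijection $G/C_G(x)\to G'$, the stability of the Camina property under quotients by proper normal subgroups of $G'$ (so one may assume $G_5=1$ and class exactly $4$), and the surjectivity of $\mathrm{ad}_1(v)\colon L_1\to L_2$ for $0\neq v\in L_1$, forcing $|L_2|<|L_1|$. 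But the sentence ``matching the two counts yields rigid numerical relations \dots\ incompatible with $L_4\neq 0$'' \emph{is} the theorem, and you supply neither the relations nor the contradiction. There is also a technical slip in the setup: for $\bar g\in L_1$ the condition $[x,g]\in G_{i+1}$ is not expressed by ``successive annihilation'' under $\mathrm{ad}_1(\bar x),\mathrm{ad}_2(\bar x),\dots$, because $\mathrm{ad}_i(\bar x)$ is defined on $L_i$, not on anything naturally attached to $\bar g\in L_1$. What the bijection actually yields is the filtration of sets $S_i(x)=\{g\in G: [x,g]\in G_{i+1}\}$ with $|S_i(x)|=|C_G(x)|\,|G_{i+1}|$ --- a genuine constraint, but exactly the kind of relation that was already available before \cite{DS}.

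That is also the substantive reason the plan, as sketched, would fail: graded surjectivity together with these coset counts is \emph{consistent} with hypothetical class-$4$ data, so no contradiction falls out of the arithmetic of $|L_1|,\dots,|L_4|$ alone. Macdonald \cite{M1, M2}, working with precisely this toolkit (commutator calculus played against the bijection $g\mapsto [x,g]$), extracted strong structural constraints --- several of which this paper quotes as Lemmas \ref{L2}, \ref{L3} and \ref{L7} --- yet the bound $G_4=\{1\}$ resisted those methods and was settled only by the long and delicate analysis of \cite{DS}; even later treatments of their theorem, such as \cite{lewis}, do not reduce the class bound to such counting. So the correct disposition, which the paper adopts and which your final paragraph in effect endorses, is to import Theorem \ref{thm:caminanilpotency} with the citation; your text should be read as a (mostly accurate) orientation to the problem, not as a proof outline whose missing middle is routine --- the missing middle is the theorem itself.
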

 
Now, we describe group theoretical property of Camina $p$-group and give a brief information about all non-linear irreducible characters of $G$.

		\begin{lemma} [\cite{M1}, Corollary 2.3] \label{L2}
		Let $G$ be a $p$-group of nilpotency class $r$. If $(G, G_{k})$ is a Camina pair, then $G_{i}/G_{i+1}$ has exponent $p$ for $k-1 \leq i \leq r.$
	\end{lemma}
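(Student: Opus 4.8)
The plan is to recast the claim as the statement that $g^{p}\in G_{i+1}$ for every $g\in G_{i}$ whenever $k-1\le i\le r$, and to prove this by induction on $i$, starting from the base value $i=k-1$ and moving upward through the lower central series. I will invoke the Camina hypothesis only at the base step; the ascending steps are pure commutator calculus. The one translation of the hypothesis I will rely on is that, for $g\in G\setminus G_{k}$, the condition $gG_{k}\subseteq Cl_{G}(g)$ is equivalent to $G_{k}\subseteq\{[g,x]:x\in G\}$ (writing $x^{-1}gx=g[g,x]$). I will also use that $k\ge 2$, since a Camina pair satisfies $G_{k}\le G'$, and that the lower central series of a class-$r$ group strictly decreases, so each factor $G_{i}/G_{i+1}$ with $i\le r$ is a nontrivial finite abelian $p$-group.

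\textbf{Base case $i=k-1$.} Let $g\in G_{k-1}$; I may assume $g\notin G_{k}$. Because $g\in G_{k-1}$ we have $[g,x]\in[G_{k-1},G]=G_{k}$ for all $x$, so the Camina condition upgrades to the set equality $\{[g,x]:x\in G\}=G_{k}$. A standard collection using $[g^{p},x]=\prod_{j}[g,x]^{g^{\,j}}$ together with $[[g,x],g]\in[G_{k},G_{k-1}]\subseteq G_{2k-1}\subseteq G_{k+1}$ yields the congruence $[g^{p},x]\equiv[g,x]^{p}\pmod{G_{k+1}}$. Now suppose, for contradiction, that $g^{p}\notin G_{k}$; since $g^{p}\in G_{k-1}$ the same two facts give $\{[g^{p},x]:x\in G\}=G_{k}$. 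Projecting to the nontrivial finite abelian $p$-group $A:=G_{k}/G_{k+1}$, the images of the sets $\{[g,x]\}$ and $\{[g^{p},x]\}$ are both all of $A$, while the congruence identifies the second image with the $p$-th power of the first; hence multiplication by $p$ is surjective on $A$, which is impossible for a nontrivial finite $p$-group. Therefore $g^{p}\in G_{k}$, proving that $G_{k-1}/G_{k}$ has exponent $p$.

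\textbf{Inductive step.} Fix $i$ with $k\le i\le r$ and assume the result for $i-1$, i.e.\ $y^{p}\in G_{i}$ for all $y\in G_{i-1}$. Since $G_{i}=[G_{i-1},G]$ and $G_{i}/G_{i+1}$ is abelian, it suffices to show $[y,x]^{p}\in G_{i+1}$ for $y\in G_{i-1}$ and $x\in G$. The same collection (now using $[G_{i},G_{i-1}]\subseteq G_{2i-1}\subseteq G_{i+1}$, valid as $i\ge 2$) gives $[y,x]^{p}\equiv[y^{p},x]\pmod{G_{i+1}}$, and by the induction hypothesis $y^{p}\in G_{i}$, so $[y^{p},x]\in[G_{i},G]=G_{i+1}$. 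Thus $[y,x]^{p}\in G_{i+1}$, and $G_{i}/G_{i+1}$ has exponent $p$, completing the induction up to $i=r$.

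I expect the base case to be the only genuine obstacle: it is the sole place where the Camina hypothesis is used, and the crux is packaging the two set-equalities $\{[g,x]\}=\{[g^{p},x]\}=G_{k}$ into the assertion that the $p$-power map is onto the nontrivial group $A=G_{k}/G_{k+1}$. The ascending induction is routine commutator collection; the only points to watch are the inequalities $2k-1\ge k+1$ and $2i-1\ge i+1$ (both guaranteed by $i\ge k\ge 2$), which are exactly what pushes the conjugation error terms into $G_{i+1}$.
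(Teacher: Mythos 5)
Your proof is correct, and there is nothing in the paper to compare it against: the paper quotes this lemma from Macdonald \cite{M1} without reproducing a proof, so your argument stands as a self-contained replacement. Both pillars check out. The translation of the Camina hypothesis is right: for $g\in G_{k-1}\setminus G_{k}$ one gets $\{[g,x]:x\in G\}=G_{k}$, with $\subseteq$ from $[G_{k-1},G]=G_{k}$ and $\supseteq$ from $x^{-1}gx=g[g,x]$. The collection congruence $[g^{p},x]\equiv[g,x]^{p}\pmod{G_{k+1}}$ is valid since the conjugation errors lie in $[G_{k},G_{k-1}]\le G_{2k-1}\le G_{k+1}$ (indeed the cruder bound $[G_{k},G]=G_{k+1}$ already suffices, as $G_{k}/G_{k+1}$ is central in $G/G_{k+1}$), and the base-case contradiction is clean: the two set equalities force $A^{p}=A$ for $A=G_{k}/G_{k+1}$, impossible for a nontrivial finite abelian $p$-group. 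One point you should make explicit rather than leave implicit: you need $G_{k}\neq 1$, i.e.\ $k\le r$, for $A$ to be nontrivial. This is exactly the standard non-degeneracy convention $1<N<G$ in the definition of a Camina pair (the paper's stated definition, taken literally, would admit $N=1$, for which the lemma fails, e.g.\ for $G$ cyclic of order $p^{2}$); your appeal to $G_{k}\le G'$ to get $k\ge 2$ likewise rests on this convention. A feature worth noting is that your structure isolates where the hypothesis matters: Camina enters only at $i=k-1$, while the ascending steps are the general fact that for $i\ge 2$ commutation induces a bilinear map $G_{i-1}/G_{i}\times G/G_{2}\to G_{i}/G_{i+1}$, so the exponent of each lower central factor divides that of the one above --- which is in the spirit of how the result is obtained in \cite{M1}.
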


	\begin{lemma} [\cite{M1}, Theorem 5.2] \label{L3}
		Let $G$ be a Camina $p$-group of nilpotency class 3 and let $|G/G_{2}| = p^{m}, |G_{2}/G_{3}|=p^{n}.$ Then
		\begin{enumerate}
			\item [\rmfamily(i)] $(G, G_{3})$ is a Camina pair,
			\item [\rmfamily(ii)] $m = 2n$ and $n$ is even.
		\end{enumerate} 
	\end{lemma}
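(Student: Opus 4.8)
The plan is to reduce the computation of $c(G)$ to that of $c(Z(G))$ by first pinning down $Z(G)$ and then describing the restrictions to $Z(G)$ of exactly those irreducible characters that are forced to appear in a minimal faithful quasi-permutation representation. First I would identify the center. Since $G$ has class $3$, $[G_3,G]=G_4=1$ gives $G_3\le Z(G)$; conversely Lemma \ref{L3}(i) says $(G,G_3)$ is a Camina pair, and for any Camina pair $(G,N)$ one has $Z(G)\le N$, so $Z(G)\le G_3$. Hence $Z(G)=G_3$. Applying Lemma \ref{L2} to the Camina pair $(G,G_3)$ (so $k=3$), the quotient $G_3/G_4=G_3$ has exponent $p$, so $Z(G)$ is elementary abelian; writing $d=d(Z(G))$ we have $Z(G)\cong C_p^{\,d}$ and $c(Z(G))=T(Z(G))=dp$.

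Next I would sort the irreducible characters by their behaviour on $G_3=Z(G)$. Every $\chi\in\Irr(G/G_3)$ has $Z(G)=G_3\le\ker\chi$, so such characters cannot help trivialize an intersection of kernels inside $Z(G)$. The remaining characters lie in $\Irr(G\mid G_3)$: since $(G,G_3)$ is a Camina pair, Lemma \ref{L8} gives $\chi(g)=0$ for all $g\in G\setminus G_3$, i.e. $\chi$ vanishes off $Z(G)$. Then $|G|=\sum_{g}|\chi(g)|^2=|Z(G)|\,\chi(1)^2$ forces $\chi(1)=|G:Z(G)|^{1/2}$ (in particular $|G:Z(G)|$ is a perfect square, consistent with $m=2n$, $n$ even from Lemma \ref{L3}(ii)), and $\chi\!\downarrow_{Z(G)}=|G:Z(G)|^{1/2}\lambda$ for a nontrivial $\lambda\in\Irr(Z(G))$. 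Moreover $\ker\chi\subseteq Z(G)$ and $\ker\chi=\ker\lambda$ is a hyperplane of $Z(G)\cong C_p^{\,d}$. For such $\chi$ one has $\mathbb{Q}(\chi)=\mathbb{Q}(\lambda)$, so $\Gamma(\chi)=\Gamma(\lambda)$, while $m(\chi)=|G:Z(G)|^{1/2}m(\lambda)$ because $\chi$ vanishes off $Z(G)$; hence the contribution $|\Gamma(\chi)|\chi(1)+m(\chi)$ equals $|G:Z(G)|^{1/2}\big(|\Gamma(\lambda)|+m(\lambda)\big)$.

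With this in hand I would run the algorithm of Lemma \ref{L5}. For the upper bound, take a minimal faithful family $\{\lambda_1,\dots,\lambda_d\}$ of linear characters of $Z(G)$ realizing $c(Z(G))$, and (by Frobenius reciprocity, since each $\lambda_i\ne 1_{Z(G)}$) lift each $\lambda_i$ to some $\chi_i\in\Irr(G\mid G_3)$ with $\chi_i\!\downarrow_{Z(G)}=|G:Z(G)|^{1/2}\lambda_i$; then $\bigcap_i\ker\chi_i=\bigcap_i\ker\lambda_i=1$, so the family is faithful, and for $\xi=\sum_i\Theta_i$ the fact that each $\Theta_i$ vanishes off $Z(G)$ yields $\xi(1)+m(\xi)=|G:Z(G)|^{1/2}\big(\widehat\xi(1)+m(\widehat\xi)\big)=|G:Z(G)|^{1/2}c(Z(G))$, where $\widehat\xi$ is the corresponding combination on $Z(G)$. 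For the lower bound, let $\xi=\sum_{i\in I}\Theta_i$ realize $c(G)$; by Lemma \ref{L1} we have $|I|=d$. As $\ker\Theta_i=\ker\psi_i$ for a representative $\psi_i$, and the $\Theta_i$ coming from $\Irr(G/G_3)$ have $Z(G)\le\ker\Theta_i$, faithfulness forces $\bigcap_{\Theta_i\in\Irr(G\mid G_3)}(\ker\Theta_i\cap Z(G))=1$ in $Z(G)\cong C_p^{\,d}$; each such kernel being a hyperplane, at least $d$ of the $\Theta_i$ must be of this type, hence all $d$ are. The same vanishing-off-$Z(G)$ computation then gives $c(G)=\xi(1)+m(\xi)=|G:Z(G)|^{1/2}\big(\widehat\xi(1)+m(\widehat\xi)\big)\ge|G:Z(G)|^{1/2}c(Z(G))$. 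Combining the two bounds gives $c(G)=|G:Z(G)|^{1/2}c(Z(G))$, and for odd $p$ Theorem \ref{thm:pgroup} upgrades this to $c(G)=q(G)=\mu(G)$.

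The hard part will be the lower-bound step: rigorously forcing a minimizing family to consist \emph{entirely} of characters from $\Irr(G\mid G_3)$. This combines the exact count $|I|=d$ from Lemma \ref{L1} with the hyperplane-intersection bound in $Z(G)\cong C_p^{\,d}$, and additionally requires checking that the passage $\xi\mapsto\widehat\xi$ preserves both the degree and the quantity $m(\cdot)$ exactly — the latter hinging on the fact that, since every $\Theta_i$ vanishes off $Z(G)$, the global minimum of $\xi$ is attained on $Z(G)$ with a nonpositive value, so that $m(\xi)=|G:Z(G)|^{1/2}m(\widehat\xi)$ rather than merely $\le$.
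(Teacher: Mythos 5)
Your proposal does not prove the statement it is attached to. The statement is Lemma \ref{L3} --- a purely group-theoretic assertion, quoted by the paper from Macdonald \cite{M1}, Theorem 5.2: that for a Camina $p$-group $G$ of nilpotency class $3$, the pair $(G,G_{3})$ is a Camina pair, and that $|G/G_{2}|=p^{2n}$ with $n$ even. What you have written is instead a proof sketch of Theorem \ref{T2}, the character-theoretic formula $c(G)=|G/Z(G)|^{1/2}c(Z(G))$. Worse, your argument is circular with respect to the actual target: you explicitly invoke ``Lemma \ref{L3}(i) says $(G,G_{3})$ is a Camina pair'' to identify $Z(G)=G_{3}$, and you lean on part (ii) to justify that $|G:Z(G)|$ is a perfect square (the degree $p^{3n/2}$ with $n$ even). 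Both conclusions you were asked to establish are thus taken as hypotheses, and nothing in your text derives them.

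A genuine proof of Lemma \ref{L3} would have to engage with entirely different material. For (i) one must show that for every $g\in G\setminus G_{3}$ the coset $gG_{3}$ is contained in the conjugacy class of $g$, equivalently $[g,G]\supseteq G_{3}$; this rests on commutator calculus special to class-$3$ Camina groups (starting from $[g,G]\supseteq G_{2}$ for $g\notin G_{2}$, which is the Camina condition, and pushing it down one step of the lower central series). For (ii), the equalities $m=2n$ and the evenness of $n$ come from counting arguments on the elementary abelian sections $G/G_{2}$ and $G_{2}/G_{3}$ (exponent $p$ by Lemma \ref{L2}), where the commutator map induces nondegenerate bilinear pairings whose existence forces these dimension constraints --- this is the substance of Macdonald's Theorem 5.2 and is nowhere reflected in your write-up. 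Since the paper itself offers no proof and simply cites \cite{M1}, the correct courses of action were either to reproduce Macdonald's argument or to cite it; substituting a proof of the downstream Theorem \ref{T2}, which \emph{depends} on this lemma, leaves the statement unproved. (As a proof of Theorem \ref{T2} your sketch is broadly sound and close in spirit to the paper's Section 4, including the hyperplane-counting lower bound via Lemma \ref{L1} and Lemma \ref{lem:caminaclass3}; but that is not the task you were set.)
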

	
	\begin{lemma} [\cite{M1}, Corollary 5.3] \label{L7}
	If $G$ is a Camina $p$-group of nilpotency class 3, then $Z_{2}(G) = G_{2}$ and $Z(G) = G_{3}$, where $Z_2(G)/Z(G)=Z(G/Z(G))$.
\end{lemma}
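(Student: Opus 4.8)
The plan is to exploit the Camina pair $(G,Z(G))$ to set up a degree-scaled bijection between the characters of $G$ relevant to the $c(G)$-algorithm and the non-trivial linear characters of $Z(G)$, thereby reducing the computation of $c(G)$ to that of $c(Z(G))$.

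\textbf{Structural reductions.} First I would record the structure of $Z(G)$ and the vanishing behaviour of the relevant characters. By Lemma \ref{L7}, $Z(G)=G_3$, and by Lemma \ref{L2} applied with the Camina pair $(G,G_3)$ (Lemma \ref{L3}(i)) the factor $G_3/G_4=G_3$ has exponent $p$, so $Z(G)$ is elementary abelian of rank $d:=d(Z(G))$. Since $(G,Z(G))$ is a Camina pair, Lemma \ref{L8} gives $\chi(g)=0$ for every $\chi\in\Irr(G|Z(G))$ and every $g\in G\setminus Z(G)$. A short orthogonality computation ($|G|=\sum_{g\in Z(G)}|\chi(g)|^2=|Z(G)|\chi(1)^2$) then shows each such $\chi$ has degree $\chi(1)=|G/Z(G)|^{1/2}$, and since $\chi(1)>1$ it follows that $\ker\chi\subseteq Z(G)$.

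\textbf{A degree-scaled bijection.} Each $\chi\in\Irr(G|Z(G))$ restricts to $Z(G)$ as $\chi\downarrow_{Z(G)}=\chi(1)\lambda_\chi$ for a unique non-trivial $\lambda_\chi\in\lin(Z(G))$. I would check by counting, using $\sum_{\chi\in\Irr(G|Z(G))}\chi(1)^2=|G|-|G/Z(G)|$ together with $\chi(1)^2=|G/Z(G)|$, that $\Irr(G|Z(G))$ has exactly $|Z(G)|-1$ members, so $\chi\mapsto\lambda_\chi$ is a bijection onto the non-trivial linear characters of $Z(G)$. This bijection is compatible with all the data entering Definition \ref{D1}: it respects Galois conjugacy, so $\Gamma(\chi)=\Gamma(\lambda_\chi)$; it satisfies $\ker\chi=\ker\lambda_\chi$; and, because $\chi$ vanishes off $Z(G)$, one has $m(\chi)=\chi(1)\,m(\lambda_\chi)=|G/Z(G)|^{1/2}m(\lambda_\chi)$ and $d(\chi)=|G/Z(G)|^{1/2}d(\lambda_\chi)$.

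\textbf{Reduction of the $c(G)$-algorithm.} Next I would run the algorithm of Lemma \ref{L5}. Let $X$ (a union of Galois classes) realise $c(G)$, and split it as $X_1\subseteq\Irr(G|Z(G))$ and $X_2\subseteq\Irr(G/Z(G))$. Each $\chi\in X_2$ has $Z(G)\subseteq\ker\chi$, while each $\chi\in X_1$ has $\ker\chi\subseteq Z(G)$; hence $\bigcap_{\chi\in X}\ker\chi=\bigcap_{\chi\in X_1}\ker\chi$, so the classes in $X_2$ are redundant and the minimality clause of Lemma \ref{L5} forces $X_2=\varnothing$. Thus $X$ consists entirely of characters of $\Irr(G|Z(G))$, and via the bijection the faithfulness condition $\bigcap_{\chi\in X}\ker\chi=1$ is \emph{equivalent} to $\bigcap_{\chi}\ker\lambda_\chi=1$ in $Z(G)$, with the minimality clauses matching (Lemma \ref{L1} confirms $|X|=d$). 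Since the corresponding $\xi=\sum_i\Theta_i$ for $G$ and its counterpart $\widehat\xi$ for $Z(G)$ satisfy $\xi(1)=|G/Z(G)|^{1/2}\widehat\xi(1)$ and $m(\xi)=|G/Z(G)|^{1/2}m(\widehat\xi)$, minimising over the two (now identical) families of admissible sets yields $c(G)=|G/Z(G)|^{1/2}c(Z(G))$. Finally, for odd $p$ Theorem \ref{thm:pgroup} gives $c(G)=q(G)=\mu(G)$, completing the proof.

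\textbf{Main obstacle.} The routine parts are the orthogonality and counting arguments; the delicate point is verifying that the bijection $\chi\mapsto\lambda_\chi$ transports the entire $c(G)$-algorithm faithfully — in particular that $m(\xi)$ scales exactly by $|G/Z(G)|^{1/2}$ (which requires that the minimising element in $\min_g\sum_{\sigma}\chi^\sigma(g)$ be sought inside $Z(G)$, the off-centre contributions being zero) and that characters inflated from $G/Z(G)$ genuinely cannot enter a minimal set. The cleanest way to handle the latter is the redundancy observation above, which turns the minimality clause of Lemma \ref{L5} directly against any class lying in $\Irr(G/Z(G))$.
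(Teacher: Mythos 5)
You have proved the wrong statement. The statement in question is Lemma \ref{L7}, a purely group-theoretic fact about the central series of a Camina $p$-group of class $3$ (namely $Z_2(G)=G_2$ and $Z(G)=G_3$), which the paper does not prove at all but quotes from Macdonald \cite{M1}. Your proposal is instead a proof sketch of Theorem \ref{T2}, the computation $c(G)=|G/Z(G)|^{1/2}c(Z(G))$ — and, critically, its very first structural step reads ``By Lemma \ref{L7}, $Z(G)=G_3$,'' so as an argument for Lemma \ref{L7} it is circular: it assumes exactly what is to be shown, and no part of it ever addresses $Z_2(G)=G_2$ or $Z(G)=G_3$. (As a sketch of Theorem \ref{T2} your outline is essentially sound and close to the paper's Section 4: the paper likewise vanishes characters off $Z(G)$ via Lemma \ref{L8}, restricts to $\Irr(G|Z(G))$ through its Lemma \ref{lem:caminaclass3}, and scales by $\chi(1)=p^{3n/2}$ — but that is not the task here.)

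For the record, a direct proof of Lemma \ref{L7} is short and needs only Lemmas \ref{L8} and \ref{L3}. Since $G$ has class $3$, $[G_3,G]=G_4=1$, so $G_3\leq Z(G)$. Conversely, $(G,G_3)$ is a Camina pair by Lemma \ref{L3}(i), so for any $z\in Z(G)\setminus G_3$ one would have $zG_3\subseteq Cl_G(z)=\{z\}$, forcing $G_3=1$ and contradicting class $3$; hence $Z(G)=G_3$. For the second claim, $[G_2,G]=G_3=Z(G)$ shows the image of $G_2$ in $G/Z(G)$ is central, i.e.\ $G_2\leq Z_2(G)$. If instead there were $g\in Z_2(G)\setminus G_2$, the Camina condition on $(G,G_2)$ gives $gG_2\subseteq Cl_G(g)$, while $[Z_2(G),G]\leq Z(G)$ gives $Cl_G(g)\subseteq gZ(G)$; together these yield $G_2\leq Z(G)=G_3$, whence $G_3=[G_2,G]\leq [G_3,G]=G_4=1$, again contradicting class $3$. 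Thus $Z_2(G)=G_2$.
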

\begin{remark}\label{remark:camina3} Suppose that nilpotency class of Camina $p$-group $G$ is $2$, i.e., $1 < G{}' \leq Z(G)$. Since G is a Camina group, each nonlinear irreducible character of $G$ vanishes outside $G{}'$. Therefore, $G{}' = Z(G)$ and hence $G$ becomes VZ $p$-group. So by Corollary \ref{C2},  we get $c(G)$. Now Suppose $G$ be a Camina $p$-group of nilpotency class 3. Then from Lemma \ref{L2} and Lemma \ref{L7}, $G/G', G'/Z(G)$ and $Z(G)$ are elementary abelian $p$-groups and $Z(G)\leq G'$. Further, from Lemma \ref{L3}, $(G, Z(G))$ is a Camina pair, $|G/G'|=p^{2n}, |G'/Z(G)|=p^{n}$ and $|G/Z(G)|=p^{3n},$ where $n$ is even. Then, it is proved in (\cite[Section 3, pp. 51-52]{PS}), that $\nl(G) = \Irr(G|Z(G)) \sqcup \nl(G/Z(G))$ as a disjoint union and $\cd(G) = \{1, p^{n}, p^{3n/2}\}$. 
\end{remark}	
\subsection{Proof of the Theorem \ref{T2}}
To prove Theorem \ref{T2}, first we will prove the following lemma.

\begin{lemma}\label{lem:caminaclass3}
Let $G$ be a Camina $p$-group of nilpotency class $3$. Let $X$ be a subset of $\Irr(G)$ such that 
		\begin{equation} \label{eq:camina}
		\bigcap_{\chi \in X} \ker (\chi) = 1
		\end{equation}
		and equation \eqref{eq:camina} does not hold for any proper subset of $X$.
		Then $X\subseteq \Irr(G|Z(G))$.
\end{lemma}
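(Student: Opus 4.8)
The plan is to reformulate the conclusion $X \subseteq \Irr(G|Z(G))$ as the statement that no character in $X$ contains the center in its kernel. Recall from the notation that $\Irr(G|Z(G)) = \Irr(G) \setminus \Irr(G/Z(G))$, so $\chi \in \Irr(G|Z(G))$ is equivalent to $Z(G) \not\subseteq \ker(\chi)$. Thus it suffices to rule out the existence of a character $\chi_0 \in X$ with $Z(G) \subseteq \ker(\chi_0)$, and I would argue by contradiction, exploiting the irredundancy hypothesis on $X$ together with the fact that $G$ is a $p$-group.

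First I would suppose, toward a contradiction, that there is a character $\chi_0 \in X$ with $Z(G) \subseteq \ker(\chi_0)$, and set
\[
N = \bigcap_{\chi \in X \setminus \{\chi_0\}} \ker(\chi).
\]
Being an intersection of kernels of irreducible characters, $N$ is a normal subgroup of $G$. Since, by hypothesis, \eqref{eq:camina} fails for every proper subset of $X$ and in particular for $X \setminus \{\chi_0\}$, we must have $N \neq 1$.

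Next I would invoke the standard fact that every nontrivial normal subgroup of a finite $p$-group meets the center nontrivially; applied to $N$, this gives $N \cap Z(G) \neq 1$. On the other hand, since $Z(G) \subseteq \ker(\chi_0)$, one has
\[
N \cap Z(G) \subseteq \left( \bigcap_{\chi \in X \setminus \{\chi_0\}} \ker(\chi) \right) \cap \ker(\chi_0) = \bigcap_{\chi \in X} \ker(\chi) = 1,
\]
where the last equality is \eqref{eq:camina}. This forces $N \cap Z(G) = 1$, contradicting $N \cap Z(G) \neq 1$. Hence no such $\chi_0$ exists, and every member of $X$ lies in $\Irr(G|Z(G))$.

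I do not anticipate a genuine obstacle here: the argument uses only that $G$ is a $p$-group, so that the intersection $N$ of the remaining kernels, being a nontrivial normal subgroup, is forced to meet $Z(G)$, together with the irredundancy of $X$. In particular, none of the finer Camina-group data recorded in Remark \ref{remark:camina3} — neither the decomposition $\nl(G) = \Irr(G|Z(G)) \sqcup \nl(G/Z(G))$ nor the degree set $\cd(G) = \{1, p^{n}, p^{3n/2}\}$ — is needed for this lemma. The only point requiring care is to confirm that $N$ is both normal and nontrivial before applying the center-intersection fact, and both are immediate from the structure of $N$ as an intersection of kernels and from the minimality of $X$.
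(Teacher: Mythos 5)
Your proof is correct, and it takes a genuinely different route from the paper's. You reduce the claim to showing no $\chi_0 \in X$ can satisfy $Z(G) \subseteq \ker(\chi_0)$, and then use only two ingredients: the irredundancy of $X$ (so that $N = \bigcap_{\chi \in X \setminus \{\chi_0\}} \ker(\chi) \neq 1$) and the standard fact that a nontrivial normal subgroup of a finite $p$-group meets the center nontrivially, which collides with $N \cap Z(G) \subseteq \bigcap_{\chi \in X}\ker(\chi) = 1$. This is clean, complete (including the degenerate case $X = \{\chi_0\}$, where $N = G$), and notably it never uses the Camina hypothesis or the class-$3$ assumption at all --- your argument proves the lemma for an arbitrary finite $p$-group, indeed for any finite nilpotent group. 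The paper instead argues through the Camina structure: since $(G, Z(G))$ is a Camina pair (Lemma \ref{L3}), every $\chi \in \Irr(G|Z(G))$ vanishes off $Z(G)$, so $\chi\downarrow_{Z(G)} = \chi(1)\lambda$ and $\ker\chi = \ker\lambda \subsetneq Z(G)$; combining this with the decomposition $\nl(G) = \Irr(G|Z(G)) \sqcup \nl(G/Z(G))$ of Remark \ref{remark:camina3}, the authors run a case analysis showing that linear characters and members of $\nl(G/Z(G))$ are always redundant against a member of $\Irr(G|Z(G))$. What the paper's heavier route buys is the sharper structural fact \eqref{eq:24} --- that kernels of the characters in $X$ lie strictly inside $Z(G)$ and restrict from linear characters of $Z(G)$ --- which is exactly what is exploited immediately afterwards in computing $c(G)$ in Theorem \ref{T2}; what your route buys is brevity, greater generality, and the observation that this particular containment is not a Camina phenomenon but a general $p$-group one.
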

\begin{proof}  Since $(G, Z(G))$ is a Camina pair, from Lemma \ref{L8}, for all $\chi \in \Irr(G|Z(G)), ~ \chi(g) = 0,~\forall~ g\in G \setminus Z(G)$.  Let $\chi$ be an arbitrary irreducible character in $\Irr(G|Z(G)).$  Then $\chi\downarrow_{Z(G)} = \chi(1) \lambda,$ where $\lambda \in \lin(Z(G)),$ and 
\begin{equation}\label{eq:24}
\ker (\chi) = \ker (\lambda) < Z(G) < G',~ \forall~\chi \in \Irr(G|Z(G)).
\end{equation}
	Secondly, if $\psi \in \nl(G/Z(G)),$ then $Z(G) \leq \ker (\psi)$. 
Suppose $X$ is a collection of some irreducible characters of $G$ satisfying equation \eqref{eq:camina} such that equation \eqref{eq:camina} does not hold for any proper subset of $X$. We know that all characters in $X$ can not be linear. So, at least one irreducible character in $X$, say $\chi$, must be non-linear which either belongs to $\Irr(G|Z(G))$ or to $\nl(G/Z(G))$ (see Remark \ref{remark:camina3}). If $\chi \in \Irr(G|Z(G))$, then $\ker (\chi) < Z(G) < G' \leq \ker (\eta),$ where $\eta \in \lin(G)$. Then $\ker (\chi) \cap \ker (\eta) = \ker (\chi) \neq 1.$ Now, if $\chi \in \nl(G/Z(G)),$ then $Z(G)\leq \big(\ker (\chi) \cap \ker (\eta)\big)$, where $\eta \in \lin(G).$ Therefore, we can not put any linear characters in $X.$ Hence $X\subseteq \nl(G) = \Irr(G|Z(G)) \sqcup \nl(G/Z(G))$ (disjoint union). Further, since $X$ must satisfy equation \eqref{eq:24}, $X$ can not be a subset of $\nl(G/Z(G))$.
Now, suppose $X$ intersects with $\Irr(G|Z(G))$ and $\nl(G/Z(G))$. Let $\chi, \psi \in X$ such that $\chi \in \Irr(G|Z(G))$ and $\psi \in  \nl(G/Z(G))$. Then $\ker \chi < Z(G) \leq \ker \psi \Rightarrow \ker \chi \cap \ker \psi = \ker \chi$ and this contradicts equation \eqref{eq:24}. Therefore, $X$ must be a subset of $\Irr(G|Z(G))$.
\end{proof}

\noindent {\it Proof of Theorem \ref{T2}.} Suppose $d(Z(G))=r$.
Let $H_{i}, ~ 1\leq i \leq r,$ denote the $r$ cyclic components of $Z(G)$, i.e. $Z(G)\cong \prod_{i=1}^r H_i$, and let $\widehat{H_{i}} = H_{1} \times \cdots \times H_{i-1} \times H_{i+1} \times \cdots \times H_{r}$ for each $i$ ($ 1\leq i \leq r$). By Remark \ref{remark:camina3}, $Z(G)$ is elementary abelian $p$-group, i.e. $|H_i|=p$ for each $i$. From Lemma \ref{lem:caminaclass3}, it is clear that, to obtain $c(G)$, we have to consider irreducible characters from the set $\Irr(G|Z(G))$. Now consider the set $X=\{\chi_1,\ldots, \chi_r\}\subseteq \Irr(G|Z(G))$, where $\chi_{i}\downarrow_{Z(G)} = \chi_{i}(1)\lambda_{i}$. Here $\lambda_{i}\in \lin(Z(G))$ is a linear character of $Z(G)$ such that $\ker(\lambda_i)= \widehat{H_{i}}$. Then 
$\cap_{i=1}^{r}\ker (\lambda_{i}) = \cap_{i=1}^{r}\ker (\chi_{i}) = 1$. Moreover, no proper subset of $X$ satisfies equation \eqref{eq:camina}.
	It is easy to observe the following points.
\begin{enumerate}
\item For each $i$, $\Gamma (\chi_i)=\Gamma(\lambda_i)$ and $|\Gamma (\chi_i)|=|\Gamma(\lambda_i)|=p-1$.
\item For each $i$, $m(\chi_i)=\chi_i(1)m(\lambda_i)$ and $\chi_i(1)=p^{3n/2}$.
\item For each $i$, $m(\lambda_i)=\big|\min \big\{\sum_{\sigma \in \Gamma(\lambda_i)}\lambda_i^{\sigma}(g) ~|~ g\in G \big\}\big|=|-1|=1$. 
\end{enumerate} 
In fact, for any $\chi\in \Irr(G|Z(G))$, $m(\chi)=\chi(1)$ and all the above three observations holds.  Now consider the character $$\xi=\sum_{i=1}^r \theta_i,$$ where $\theta_i=\sum_{\sigma\in \Gamma(\chi_i)}\chi_i^{\sigma}= \chi_i(1)\bigg(\sum_{\sigma \in \Gamma(\lambda_i)}\lambda_i^{\sigma}\bigg)$. Then by the above observations, we have $m(\xi)=r\chi_i(1)=rp^{3n/2}$ and $\xi(1)=\sum_{i=1}^r |\Gamma(\chi_i)|\chi_i(1)=r(p-1)p^{3n/2}$.
 Thus, 
$\xi(1)+m(\xi)=r(p-1)p^{3n/2}+rp^{3n/2}=rp^{\frac{3n}{2}+1}.$
Note that the set $X = \{ \chi_{i} \}_{i=1}^{r}$ gives a set $Y = \{ \lambda_i \}_{i=1}^{r}$, which is a minimal faithful quasi-permutation representation of $Z(G)$. Therefore,
\[ c(G) = \xi(1) + m(\xi) = rp^{\frac{3n}{2}+1}. \]
Moreover, we know that $c(Z(G))=rp$. Hence we have $$c(G)=p^{\frac{3n}{2}}c(Z(G))=|G/Z(G)|^{1/2}c(Z(G)).$$
Further, from Theorem \ref{thm:pgroup}, for Camina $p$-group (odd prime $p$) we get $$c(G)= \mu(G) = q(G) = |G/Z(G)|^{1/2} c(Z(G)).$$ 
This proves the result.

\begin{corollary} Let $G$ and $H$ be two Camina $p$-groups of same order, and of same nilpotency class. Then $c(G)=c(H)$ if and only if $Z(G)$ is isomorphic to $Z(H)$.
 \end{corollary}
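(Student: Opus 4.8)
The plan is to reduce the statement to the rank $d(Z(\cdot))$ of the (elementary abelian) centre and then to exploit the rigid arithmetic that Theorem \ref{T2} and the class-$3$ structure force on the order. Throughout I read $G$ and $H$ as Camina $p$-groups of nilpotency class $3$, the setting of this section; the class-$3$ hypothesis is genuinely needed, since among class-$2$ Camina $p$-groups of a fixed order the value of $c$ need not determine the centre (two Camina $p$-groups of order $p^{9}$ can both give $c=p^{5}$ with non-isomorphic centres of ranks $1$ and $3$, by Corollary \ref{C2}), so the parity input below is essential.

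First I would record the only structural data required. By Remark \ref{remark:camina3} and Lemma \ref{L7}, $Z(G)=G_{3}$ is elementary abelian, so $Z(G)\cong C_{p}^{d_{G}}$ with $d_{G}=d(Z(G))$; hence $Z(G)\cong Z(H)$ if and only if $d_{G}=d_{H}$, and $c(Z(G))=d_{G}p$. Writing $|G/Z(G)|=p^{3n_{G}}$ with $n_{G}$ even (Lemma \ref{L3} and Remark \ref{remark:camina3}), Theorem \ref{T2} gives
\[
c(G)=d_{G}\,p^{3n_{G}/2+1},\qquad |G|=p^{3n_{G}+d_{G}},
\]
and likewise for $H$.

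The easy direction would be immediate: if $Z(G)\cong Z(H)$ then $d_{G}=d_{H}$, and $|G|=|H|$ forces $3n_{G}+d_{G}=3n_{H}+d_{H}$, so $n_{G}=n_{H}$ and thus $c(G)=c(H)$. For the converse I would assume $c(G)=c(H)$ and $|G|=|H|$. Cancelling the common factor in $d_{G}p^{3n_{G}/2+1}=d_{H}p^{3n_{H}/2+1}$, and using that $n_{G},n_{H}$ are even, I may take $n_{G}\ge n_{H}$ and set $t=(n_{G}-n_{H})/2\ge 0$; then $d_{H}=d_{G}p^{3t}$. The equal-order relation $3n_{G}+d_{G}=3n_{H}+d_{H}$ simultaneously gives $d_{H}-d_{G}=3(n_{G}-n_{H})=6t$. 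Substituting yields the single Diophantine equation
\[
d_{G}\,(p^{3t}-1)=6t .
\]

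The crux, which I expect to be the only real obstacle, is to show this forces $t=0$. For $t\ge 1$ and $p\ge 2$ one checks $p^{3t}-1\ge 2^{3t}-1>6t$ (verify $t=1$ directly, then note the left side grows exponentially while the right side grows linearly), whence $d_{G}(p^{3t}-1)\ge p^{3t}-1>6t$, a contradiction. Therefore $t=0$, so $n_{G}=n_{H}$ and $d_{G}=d_{H}$, i.e.\ $Z(G)\cong Z(H)$. The entire force of this last step comes from the class-$3$ constraints: the evenness of $n$ makes the multiplicative gap between the two candidate values of $c$ a power $p^{3t}$ whose exponent is a multiple of $3$, and the equal-order hypothesis can only absorb such a gap through the additive, merely linear slack $6t$ available in the centre ranks, which is impossible once $t\ge 1$.
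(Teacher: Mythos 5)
Your proof is correct, but there is nothing in the paper to compare it against: the corollary is stated at the end of Section 4 with no proof at all, as an unexplained consequence of Theorem \ref{T2}. Your argument is the natural completion of that intended deduction, and it shows the deduction is not as immediate as the paper suggests. You combine $c(G)=|G/Z(G)|^{1/2}c(Z(G))$ with the class-3 structural facts ($Z(G)$ elementary abelian, $|G/Z(G)|=p^{3n}$ with $n$ even, from Remark \ref{remark:camina3} and Lemma \ref{L3}), reduce the nontrivial direction to the Diophantine equation $d_G(p^{3t}-1)=6t$, and kill it by the exponential-versus-linear estimate $p^{3t}-1\geq 2^{3t}-1>6t$ for $t\geq 1$. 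All steps check out (the evenness of $n_G, n_H$ correctly guarantees $t\in\mathbb{Z}$), and the easy direction is handled correctly.

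You are also right, and this is worth emphasizing, that the class-3 restriction is not cosmetic: as literally stated (for arbitrary Camina $p$-groups) the corollary is false. A concrete witness for $p=3$: an extraspecial group of order $3^9$ and the group of upper unitriangular $3\times 3$ matrices over $\mathbb{F}_{27}$ are both Camina $3$-groups of class $2$ and order $3^9$; by Corollary \ref{C2} both have $c=3^4\cdot 3=27\cdot 9=3^5$, yet their centres are $C_3$ and $C_3\times C_3\times C_3$. So reading the corollary inside the class-3 setting of Section 4, as you do, is the only reading under which it is true. Two minor imprecisions in your counterexample remark: the coincidence $c=p^5$ for centre ranks $1$ and $3$ occurs only for $p=3$ (rank $3$ gives $c=3p^4$, and $3p^4=p^5$ forces $p=3$), and a genuine counterexample requires exhibiting actual groups realizing those parameters, such as the two above. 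Neither affects your main argument.
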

	
\section{Acknowledgements}
Ayush Udeep acknowledges University Grants Commission, Government of India. The corresponding author acknowledges SERB, Government of India for financial support through grant (MTR/2019/000118).


\begin{thebibliography}{99}
	
\bibitem{AB} M. H. Abbaspour, H. Behravesh, A note on quasi-permutation representations of finite groups, Int. J. Contemp. Math. Sci., {\bf 4}(27) (2009), 1315-1320.	
	
	
\bibitem{BD} H. Behravesh and M. Delfani, On faithful quasi-permutation representations of groups of order $p^{5}$, J. Algebra Appl., {\bf 17}(7) (2018), 1850127 (12 pages).		



	
\bibitem{BG} H. Behravesh and G. Ghaffarzadeh, Minimal degree of faithful quasi-permutation representations of p-groups, Algerba Colloq., {\bf 18}(Special Issue no. 1) (2011), 843-846.
			
\bibitem{HB1997} H. Behravesh, The minimal degree of a faithful quasi-permutation representation of an abelian group, Glasgow Math. J., {\bf 39}(1) (1997), 51-57.
		
\bibitem{HB} H. Behravesh, Quasi-permutation Representations of $p$-groups of Class 2, J. London Math. Soc. (2), {\bf 55}(2) (1997), 251-260.	

\bibitem{Bioch} J. C. Bioch, On n-isoclinic Groups. Indag. Math. Proc., {\bf 79}(5)  (1976), 400-407.

	
\bibitem{BGHS} J. M. Burns, B. Goldsmith, B. Hartley and R. Sandling, On Quasi-permutation Representations of Finite Groups, Glasgow Math. J., {\bf 36}(3) (1994),  301-308.	


\bibitem{DS} R. Dark and C. M. Scoppola, On Camina group of prime power order, J. Algebra, {\bf 181}(3)
(1996), 787-802.
			
\bibitem{FAM} G. A. Fern\'andez-Alcober and A. Moret\'o, Groups with two extreme character degrees and their normal subgroups, Trans. Amer. Math. Soc. {\bf 353}(6) (2001), 2171-2192.			
			
			
			
\bibitem{GA} G. Ghaffarzadeh and M. H. Abbaspour, Minimal degrees of faithful quasi-permutation representations for direct products of $p$-groups, Proc. Indian Acad. Sci. Math. Sci., {\bf 122}(3) (2012), 329-334.

\bibitem{MG} M. Ghaffarzadeh, On minimal degrees of faithful quasi-permutation representations of nilpotent groups, Journal of New Researches in Mathematics, {\bf 3}(12) (2018), 87-98.




\bibitem{Hall} P. Hall, The classification of prime-power groups, J. Reine Angew. Math., {\bf 182} (1940), 130-141.

\bibitem{IL} I. M. Isaacs, M. L. Lewis,
Camina p-groups that are generalized Frobenius complements,
Arch. Math. (Basel), {\bf 104}(5) (2015), 401-405.

\bibitem{I} I. M. Isaacs, Character theory of finite groups, Academic, New York, 1976.

\bibitem{RJ} R. James, The groups of order {$p^6$} ( p an odd prime), Math. Comp., {\bf 34}(150) (1980), 613-637.



\bibitem{lewis}  M. L. Lewis, Classifying Camina groups: a theorem of Dark and Scoppola. Rocky Mountain J. Math., {\bf 44}(2) (2014), 591-597.

		\bibitem{L1} M. L. Lewis, The vanishing-off subgroup, J. Algebra, {\bf 321}(4) (2009), 1313-1325.

\bibitem{L2009} M.L. Lewis, Character tables of groups where all nonlinear irreducible characters vanish off the center, Ischia group theory 2008, 174-182, World Sci. Publ., Hackensack, NJ, 2009.
	
		\bibitem{L2} M. L. Lewis, Generalizing Camina groups and their character tables, J. Group Theory, {\bf 12}(2) (2009), 209-218.



		\bibitem{M1} I. D. Macdonald, Some p-groups of Frobenius and extra-special type, Isr. J. Math., {\bf 40} (1981), 350-364.
		
		
		
	\bibitem{SM} S. Mattarei, Character Tables and Metabelian groups, J. London Math. Soc., {\bf 46}(2) (1992), 92-100.




		
		

		

		
		

		
		
		
		
		
		
	
		
		

		

		

		
		
		
		
		
		
		
		
		
		
		
		
		
		
		

		
		
		
		
		
		
		
		

		
		\bibitem{PS} S. K. Prajapati and B. Sury, On the Total Character of Finite Groups, Int. J. Group Theory, {\bf 3}(3) (2014), 47-67.

\bibitem{SR} S. K. Prajapati and R. Sarma, Total Character of a Group $G$ with
$(G, Z(G))$ as a Generalized Camina Pair, Canad. Math. Bull., {\bf 59}(2)  (2016), 392-402.


		
		
		
		
		
		
			\bibitem{W} W. J. Wong, Linear Groups Analogous to Permutation Groups, J. Austral. Math. Soc., (Sec A),  {\bf 3} (1963), 180-184.		

		\bibitem{DW} D. Wright, Degrees of Minimal Embeddings for Some Direct Products, Amer. J. Math., {\bf 97}(4) (1975), 897-903.		
	\end{thebibliography}
\end{document}